\theoremstyle{plain}
\newtheorem{thm}{Theorem}[section]
\newtheorem{coro}[thm]{Corollary}
\newtheorem{defn}[thm]{Definition}
\newtheorem{prop}[thm]{Proposition}
\theoremstyle{definition}
\newtheorem{ex}[thm]{Example}
\newtheorem{rem}[thm]{Remark}
\newcommand{\N}{\mathbb{N}}
\newcommand{\R}{\mathbb{R}}
\newcommand{\ind}{\perp\!\!\!\perp}
\def\e{\mathrm{e}}
\def\E{\mathbb{E}}
\def\d{\, \mathrm{d}}
\def\1{\mathds{1}}
\def\R{\mathbb{R}}
\def\P{\mathbb{P}}
\def\E{\mathbb{E}}
\def\N{\mathbb{N}}
\def\X{\mathcal{X}}
\def\Y{\mathcal{Y}}
\def\K{\textup{\textbf{K}}}
\def\|{\, | \,}
\def\rd{\,\mathrm{d}}
\def\var{\mathrm{Var}}
\def\cov{\mathrm{Cov}}
\def\e{\mathrm{e}}
\def\F{\mathcal{F}}
\def\nn{\nonumber}
\DeclareMathOperator{\sech}{sech}
\def\eql{\overset{\mathcal L}{=}}
\def\eqas{\overset{a.s.}{=}}
\numberwithin{equation}{section}
\begin{document}
\begin{frontmatter}
\title{\texorpdfstring{Arcade Processes for \\ Informed Martingale Interpolation \tnoteref{footnote1}}{}}
\author{\texorpdfstring{Georges Kassis$^{a}$ and Andrea Macrina$^{a,b}$\, \fnref{footnote2}}{}}  
\affiliation{organization={Department of Mathematics, University College London},
            addressline={Gower Street}, 
            city={London},
            postcode={WC1E 6BT},
            country={United Kingdom}}
\affiliation{organization={AIFMRM, University of Cape Town},
            addressline={Private Bag X3}, 
            city={Rondebosch, Cape Town},
            postcode={7701},
            country={South Africa}}
\fntext[footnote2]{Corresponding author: a.macrina@ucl.ac.uk}
\begin{abstract}
Arcade processes are a class of continuous stochastic processes that interpolate in a strong sense, i.e., omega by omega, between zeros at fixed pre-specified times. Their additive randomisation allows one to match any finite sequence of target random variables, indexed by the given fixed dates, on the whole probability space. The randomised arcade processes (RAPs) can thus be interpreted as a generalisation of anticipative stochastic bridges. The filtrations generated by these processes are utilised to construct a class of martingales that interpolate between the given target random variables. These so-called filtered arcade martingales (FAMs) are almost-sure solutions to the martingale interpolation problem and reveal an underlying stochastic filtering structure. In the special case of conditionally Markov randomised arcade processes, the dynamics of FAMs are informed by Bayesian updating. The same ideas are applied to filtered arcade reverse-martingales, which are constructed in a similar fashion, using reverse-filtrations of RAPs, instead. Several explicit examples for RAPs and FAMs are provided and simulated. This paper concludes with an outlook on potential connections between FAMs and martingale optimal transport, and related applications.
\end{abstract}
\begin{keyword}
Arcade processes, stochastic interpolation, stochastic bridges, martingale interpolation, nonlinear stochastic filtering, information-based approach, Markov processes, Bayesian updating, martingale optimal transport.
\end{keyword}
\end{frontmatter}
%{\bf AMS2020}: 49Q22, 60G35, 60G44, 60J60, 62M20, 94A15. 

\section{Introduction}
The problem of constructing martingales which match given marginales has been the subject of many works in probability theory and applications thereof. The origin of the problem dates back to Strassen \cite{Strassen}, with preliminary work by Blackwell \cite{Blackwell}, Sherman \cite{Sherman}, and Stein \cite{Stein}, see \cite{Lecam} for a review. It was shown that a discrete-time martingale $(M_n)_{n\in \N}$ can match a sequence of real probability measures $(\mu_n)_{n\in \N}$ if and only if the measures $(\mu_n)_{n\in \N}$ are convexly ordered, i.e., $M_n \sim \mu_n$ is possible for all $n \in \N$ if and only if $ \int f(x) \d\mu_n(x)$ is an increasing sequence in $n$ for any positive convex function $f$. This result also extends to the continuous-time setting, see Kellerer's theorem \cite{Kellerer}. 

Strassen's and Kellerer's theorems do not provide a method for constructing the matching martingales. Many techniques inspired by Kellerer's setting, where one looks to construct a martingale that matches a given peacock \cite{Hirsch}: Skorokhod's embedding problem, Brownian random time-changes, local volatility models, scaling peacock methods, etc., see \cite{Madan}. Since the target is a dynamical measure, the matching is produced in a weak sense, and the martingale evolves in continuous time. On the other hand, the matching in Strassen's setting can be weak or almost sure, depending on the nature of the targets (measures or random variables), and the martingale is in discrete time. A well-known approach for this task is martingale optimal transport (MOT), see, e.g., \cite{Beiglbock} and \cite{Beiglbock2}. 

Our interest lies at the intersection of Strassen's and Kellerer's settings: how to construct a continuous-time martingale that matches, in law or almost surely, a given finite set of convexly ordered random variables, at pre-specified dates? This is for instance of interest in a financial setting, where knowledge of the prices of vanilla call and put options provides an implied distribution for the underlying asset price at future fixed dates under a no-arbitrage requirement. This is an example of what is called the martingale interpolation problem, since one wishes to interpolate with a continuous-time martingale between the components of a discrete-time martingale. If one put the martingale condition aside, then bridging the gap between a coupling and an interpolating measure is usually a manageable proposition. Imposing the martingale property on the interpolating process is the challenging part of this problem.   

In this paper, inspired by the information-based approach, see \cite{BHM1} and \cite{BHM2}, we construct a class of martingales that match, almost surely, any set of convexly ordered random variables, indexed by an arbitrary set of fixed times. The construction of such interpolating martingales, called \textit{filtered arcade martingales} (FAMs), needs two main ingredients: a discrete-time martingale (for example, a solution to an MOT problem) and an interpolating process that we call the \textit{randomised arcade process} (RAP).

Before we proceed with a section-by-section summary of the content of this paper, we next emphasize the mathematical and scientific contributions of the new approach for stochastic interpolation proposed here. 
\begin{enumerate}
\item A new class of sample-continuous stochastic processes is constructed that interpolate strongly between zeros, that is, omega by omega, called {\it arcade processes}. In general, these are no Markov processes. However, sufficient conditions are established under which an AP is a Markov process. 
\item Strong interpolation between a vector of random variables with arbitrary distribution is obtained by an additive extension of arcade processes---the randomised arcade processes. These processes feature a so-called signal process and an arcade process, the latter being the noise process. RAPs are no Markov processes. However, a special class of RAPs is identified for which a conditionally Markov property is introduced. Conditionally Markov randomised arcade processes is a class of processes that allows for closed-form expressions for the dynamics of the ensuing martingale interpolation.
\item {Filtered arcade martingales}, a class of processes which almost surely interpolate between the random variables (in convex order) linked by the randomised arcade processes, are constructed, whereby the FAMs are adapted to the filtration generated by the associated randomised arcade process. In the case that the randomised arcade process posses the conditional-Markov property, it is shown that the filtered arcade martingale can be expressed in closed form by an application of the Bayes formula. A similar construction method is produced to build filtered arcade reverse-martingales (FARMs), which are constructed in a similar fashion, using reverse-filtrations of RAPs.

\item We establish the connection between filtered arcade martingales and stochastic filtering problems, for which an explicit solution can be found in the case that the underlying randomised arcade process is conditionally Markov with respect to its natural filtration.
\item The almost-sure construction method of the interpolating filtered arcade martingales allows for a level of flexibility to search for the optimal martingale coupling of the random variables in-between which the filtered arcade martingale interpolates. This property of FAMs' leads to the investigation of filtered arcade martingales in the context of martingale optimal transport which we discuss at the end of this paper in the outlook pointing to separate, ongoing research.
\end{enumerate}
Following this introduction, in Section 2, we begin with constructing the {\it arcade processes} (AP). Defined as a functional of a given stochastic process, called the driver, APs are sample-continuous stochastic processes that interpolate between zeros on the whole probability space, i.e., omega by omega. To achieve the interpolation, APs rely on deterministic functions, the interpolating coefficients. APs may be viewed as multi-period anticipative stochastic bridges. We study their properties and focus on Gaussian APs, which play an important role in the construction of FAMs. Starting from a Gauss-Markov driver, we show that it is always possible to construct a Markovian AP by utilising the covariance structure of the driver. The resulting AP from this procedure is called a standard AP. This process is not unique, since one can construct infinitely many Markovian APs that are driven by the same Gauss-Markov process.

Section 3 treats an additive randomisation of APs, that is,  an AP added to a stochastic process that interpolates deterministically between the given random variables. The randomised arcade processes (RAPs) can be thought of as a sum of a signal function and a noise process. By construction, a RAP can match any random variables on the whole probability space at an arbitrary sequence of fixed times, i.e, it is a stochastic interpolator between target random variables in the strong sense. The notion of the Markov property does not suit RAPs in general, since their natural filtration contains the $\sigma$-algebras generated by each previously matched target random variables. For this reason, we introduce a weaker notion, the conditionally-Markov property, and go on to show under which conditions a RAP is conditionally Markov.

In the fourth section, we introduce the filtered arcade martingales (FAMs). A FAM is defined by the conditional expectation of the final target random variable, given the information generated by a RAP and, hence, inherits the filtering framework of the information-based approach that appears in \cite{Macrina}, \cite{BHM1}, \cite{BHM2}: the signal is the final target random variable and can only be observed through a noisy process, the RAP. FAMs are tractable, thanks to the conditionally-Markov property enjoyed by the underlying RAPs, and can be simulated using Bayes' formula. Applying Itô's formula, we derive the stochastic differential equation satisfied by a FAM, which reveals the structure of the underlying innovations process that is adapted to the filtration generated by the associated RAP.  The same ideas are applied to introduce the {\it filtered arcade reverse-martingales} (FARMs), constructed using reverse-filtrations of RAPs. Finally, as an application of this theory, we briefly discuss a similar problem to martingale optimal transport that incorporates noise in the optimisation process, inspired by the entropic regularisation of optimal transport and Schrödinger's problem. This approach selects the martingale coupling that maximises the expectation of the cumulative cost in time between a target random variable and its associated FAM, for a given RAP.

Throughout this paper, unless specified otherwise, let $n\in \N_0$ and consider the collection of fixed dates $\{T_i \in \R \| i=0, 1, \ldots, n\}$ such that $0\leqslant T_0 <T_{1}<T_{2}<\ldots < T_n < \infty$, and introduce the ordered sets 
\begin{eqnarray*}
\{T_0,T_n\}_*&=&\{T_0, T_1, \ldots,T_n\},\\
(T_0,T_n)_*&=&\bigcup\limits_{i=0}^{n-1}(T_i,T_{i+1}).
\end{eqnarray*}
Let $(\Omega, \F, \mathbb{P})$ be a probability space, $\left(D_{t}\right)_{t \in [T_0,T_n]}$ a sample-continuous stochastic process such that for all $c \in \R$, $\P[D_t=c]<1$ for all $t\in (T_0,T_n)_*$, and $X$ an $\R^{n+1}$-valued random vector that is independent of $(D_t)$.

\section{Arcade processes}
We construct stochastic processes on $[T_0,T_n]$, as a functional of $(D_t)$, which match exactly $0$ (for all $\omega \in \Omega$) at the given times $\{T_0,T_n\}_*$. The first step is to introduce deterministic functions called interpolating coefficients. 
\begin{defn} \label{interpolating}
The functions $f_0,f_1,\ldots, f_n$ are interpolating coefficients on $\{T_0,T_n\}_*$ if $f_0, $ $f_1,\ldots, f_n \in C^0\left([T_0,T_n],\R\right), f_i(T_i)=1,$ and $f_i(T_j)=0 \text{ for }i,j=0,\ldots,n,\, i \neq j$.
\end{defn}
We can now give the definition of what we call an \emph{arcade process}.
\begin{defn} \label{AP}
An arcade process (AP) on the partition $\{T_0,T_n\}_*$, denoted $(A_{t}^{(n)})_{t \in [T_0,T_n]}$, is a stochastic process of the form
\begin{equation}
A_{t}^{(n)}:=D_{t}-\sum_{i=0}^n f_i(t)D_{T_i},
\end{equation}
where $f_0,\ldots, f_n$ are  interpolating coefficients on $\{T_0,T_n\}_*$. The process $(D_t)_{t \in [T_0,T_n]}$ is the driver of the AP. We denote by $(\F^A_t)_{t \in [T_0,T_n]}$ the filtration generated by $(A_{t}^{(n)})_{t \in [T_0,T_n]}$.
\end{defn}
We observe that $A_{T_0}^{(n)}=A_{T_1}^{(n)}=\ldots=A_{T_n}^{(n)}=0$ by construction for all $\omega \in \Omega$. That is, $(A_{t}^{(n)})_{t \in [T_0,T_n]}$ strongly interpolates between zeros at times $t \in \{T_0,T_n\}_*$.
\begin{ex} \label{ABB}
For $n=1$, ${f_0(t)= \frac{t-T_1}{T_0-T_1}} $, $ {f_1(t)= \frac{t-T_0}{T_1-T_0}} $, the AP driven by a standard Brownian motion $\left(B_{t}\right)_{t \geqslant 0}$ is the anticipative Brownian bridge on $[T_0,T_1]$,
\begin{equation}
A_{t}^{(1)}= B_t - \frac{T_1-t}{T_1-T_0} B_{T_0} - \frac{t-T_0}{T_1-T_0} B_{T_1}.
\end{equation}
\end{ex}

Two generalisations of the Brownian bridge $(A_{t}^{(1)})_{t \in [T_0,T_1]}$, shown in Example \ref{ABB}, to an $n$-arc arcade process $(A_{t}^{(n)})_{t \in [T_0,T_n]}$ are given in \ref{appendixA}, where their properties are investigated and other examples are provided. We call the generalisation 
\begin{equation}
    A_t^{(n)}=D_t - \sum_{i=0}^n \prod\limits_{k=0,k\neq i}^n\frac{ T_k-t}{ T_k-T_i} D_{T_i},
\end{equation}
driven by a stochastic process $(D_t)$, the Lagrange AP due to the form of the interpolating coefficients.

One might wonder why we choose the timeline $T_0,T_1,\ldots,T_n$ and the interpolating coefficients $f_0,f_1,\ldots,f_n$ to be deterministic rather than stochastic. While the definition of APs is easily extended to a stochastic timeline and stochastic interpolating coefficients, APs are fundamentally designed as a building bloc for the martingales introduced in Section \ref{FAMchapter}. Inspired by several applications, these martingales interpolate between random variables at fixed times. APs play a key role in their construction, which relies entirely on a deterministic timeline and deterministic interpolating coefficients. Furthermore, we want APs to be understood as a natural generalisation of anticipative bridges, which also use deterministic times and interpolating coefficients. 

The expectation and covariance of an AP, when they exist, are fully characterised by the driver and the interpolating coefficients. 
\begin{prop} \label{covform}
If the driver $(D_t)$ has a mean function $\mu_D$, a variance function $\sigma_D^2$, and a covariance function $K_D$, we have:
\begin{align}
&\mu_A(t):=\E [A_{t}^{(n)}]=  \mu_D(t) - \sum\limits_{i=0}^n f_i(t) \mu_D(T_i), \label{expect} \\  
&\sigma_A^2(t):=\var{[A_{t}^{(n)}]} = \sigma_D^2(t) + \sum\limits_{i=0}^n f_i^2(t) \sigma_D^2(T_i)-2 f_i(t) K_D(t,T_i) \nn \\ 
&\hspace{1.55cm}+ 2  \sum\limits_{i=0}^n  \sum\limits_{j=i+1}^{n} f_i(t) f_j(t) K_D(T_i,T_j), \label{variance} \\  
&K_A(s,t):=\cov[{A_{s}^{(n)}, A_{t}^{(n)}}] = K_D(s,t)  - \sum\limits_{i=0}^n f_i(t)K_D(s,T_i) +f_i(s)K_D(t,T_i)   \nn \\ 
&\hspace{1.55cm}+ \sum\limits_{i=0}^n\sum\limits_{j=0}^{n} f_i(s)f_j(t) K_D(T_i, T_j). \label{cov}
\end{align}
\end{prop}
\begin{proof}
Eq. (\ref{expect}) follows from the linearity property of the expectation and Eq. (\ref{variance}) follows from Eq. (\ref{cov}). Hence, it is enough to prove Eq. (\ref{cov}). 
\begin{align}
    &\cov[{A_{s}^{(n)}, A_{t}^{(n)}}]\notag\\ 
    &= \cov\left[ D_{s}-\sum_{i=0}^n f_i(s)D_{T_i},  D_{t}-\sum_{i=0}^n f_i(t)D_{T_i} \right] \\
    &= K_D(s,t) - \cov\left[ D_{s},  \sum_{i=0}^n f_i(t)D_{T_i} \right] - \cov\left[ \sum_{i=0}^n f_i(s)D_{T_i},  D_{t} \right] \notag\\
    &\hspace{6.71cm}+ \cov\left[\sum_{i=0}^n f_i(s)D_{T_i}, \sum_{i=0}^n f_i(t)D_{T_i} \right]  \\ 
    &= K_D(s,t) - \sum_{i=0}^n f_i(t) K_D(s,T_i) - \sum_{i=0}^n f_i(s)K_D(T_i,t) + \sum_{i=0}^n\sum_{j=0}^n  f_i(s)f_j(t) K_D(T_i,T_j) \\
    &=K_D(s,t)  - \sum\limits_{i=0}^n f_i(t)K_D(s,T_i) +f_i(s)K_D(t,T_i)+ \sum\limits_{i=0}^n\sum\limits_{j=0}^{n} f_i(s)f_j(t) K_D(T_i, T_j).
\end{align}
\end{proof}
Markov APs will play a crucial role in the construction of filtered arcade martingales. A useful property of these processes is the following.
\begin{prop} \label{ind}
Let $(A_t^{(n)})_{t \in [T_0,T_n]}$ be an AP on $\{T_0,T_n\}_*$ that is Markov with respect to $(\F^A_t)_{t \in [T_0,T_n]}$. If $t>T_i$ for $T_i \in \{T_0,T_n\}_*$, then $A_t^{(n)}$ is independent of $\F^A_{T_{i}}$.
\end{prop}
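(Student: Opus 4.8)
The plan is to exploit the fact, recorded right after Definition~\ref{AP}, that an arcade process is pinned to zero at every node of its partition: $A_{T_i}^{(n)} = 0$ for every $\omega \in \Omega$ and every $T_i \in \{T_0,T_n\}_*$. In particular the $\sigma$-algebra $\sigma(A_{T_i}^{(n)})$ generated by the value of the process at $T_i$ is $\P$-trivial. The whole statement then follows by feeding this observation into the Markov property of the process.

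In detail, I would fix $t > T_i$ and an arbitrary bounded Borel function $g:\R\to\R$, and apply the Markov property of $(A_t^{(n)})$ with respect to $(\F^A_t)$ at the intermediate time $s = T_i \leqslant t$:
\[
\E\!\left[ g\bigl(A_t^{(n)}\bigr) \mid \F^A_{T_i} \right] \;=\; \E\!\left[ g\bigl(A_t^{(n)}\bigr) \mid A_{T_i}^{(n)} \right] \qquad \text{a.s.}
\]
Because $A_{T_i}^{(n)} \equiv 0$, the right-hand side is a conditional expectation with respect to the trivial $\sigma$-algebra and therefore equals the constant $\E\bigl[g(A_t^{(n)})\bigr]$. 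Hence $\E[ g(A_t^{(n)}) \mid \F^A_{T_i}] = \E[g(A_t^{(n)})]$ for every bounded Borel $g$, which is exactly the statement that $A_t^{(n)}$ is independent of $\F^A_{T_i}$. Equivalently, one can argue directly: by the Markov property, $\sigma(A_t^{(n)})$ and $\F^A_{T_i}$ are conditionally independent given $A_{T_i}^{(n)}$, and conditional independence given a trivial $\sigma$-algebra is plain independence.

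There is essentially no real obstacle here, consistent with the text's description of this as an ``immediate property''. The two points worth a line of care are: invoking the Markov property in its conditional-expectation (equivalently, conditional-independence) form rather than, say, a transition-kernel form; and using that the pinning $A_{T_i}^{(n)} = 0$ holds surely, so the conditioning $\sigma$-algebra is genuinely $\{\emptyset,\Omega\}$ and not merely $\P$-negligibly small. The remaining measure-theoretic tidying --- passing from bounded Borel test functions to all Borel sets --- is routine.
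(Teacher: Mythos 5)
Your argument is correct, and it is exactly the reasoning the paper has in mind: the result is stated without proof as an ``immediate property,'' and the intended justification is precisely that the Markov property at $s=T_i$ reduces the conditioning to $\sigma(A_{T_i}^{(n)})$, which is trivial because the arcade process is pinned to zero surely at every node. Nothing further is needed.
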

\begin{proof}
    Since $(A_t^{(n)})$ is $0$ on $\{T_0,T_n\}_*$, the proof follows immediately from the Markov property.
\end{proof}
In particular, if $(A_t^{(n)})$ is Markov, and it has a covariance function, then $\cov (A_s^{(n)}, A_t^{(n)})= 0 $ whenever $s\in[T_i,T_{i+1}]$, $t\in[T_j,T_{j+1}]$ and $j \neq i$.

If APs are considered being noise processes, it is natural to study their Gaussian subclass. Therefore, we introduce terminology that avoids confusion when referring to the driver of an AP that is Gaussian, since the driver of an AP is not uniquely determined. Consider the process $Y_t=B_t + tY$, where $(B_t)$ is a Gaussian process, $Y$ a non-Gaussian random variable, and the AP $(A_t^{(1)})_{t \in [0,1]}$ on $\{0,1\}$ is given by
\begin{align}
    A_t^{(1)}&=Y_t - (1-t) Y_{0} - t Y_{1} \nn \\
    &= B_t + tY - (1-t) B_0 - t (B_1 + Y) \nn \\
    &= B_t - (1-t) B_0 - tB_1.
\end{align}
Both $(Y_t)$ and $(B_t)$ can be considered the driver of $(A_t^{(1)})$, which is a Gaussian process, but only one driver is Gaussian.
\begin{defn}
An AP $(A_t^{(n)})_{t \in [T_0,T_n]}$ is said to be a Gaussian AP if its driver $(D_t)$ is a Gaussian stochastic process on $[T_0,T_n]$.
\end{defn}
Under this definition, we know that $(Y_t)$ cannot be referred to as the driver of the Gaussian AP $(A_t^{(1)})$ defined above, since it is not Gaussian. Next, we give a first result on the Markov property of Gaussian APs.
\begin{thm} \label{thmmarkov}
Let $(A_t^{(n)})_{t \in [T_0,T_n]}$ be a Gaussian AP on $\{T_0,T_n\}_*$ with covariance function $K_A$. Then $(A_t^{(n)})$ is Markov with respect to its own filtration if and only if for all $ (r,s,t) \in (T_0,T_n)_*^3$ satisfying $r \leqslant s < t$, there exists $a(s,t) \in \R_0$ such that
\begin{equation} \label{form1}
    K_A(r,t)=\left\{
    \begin{array}{cc}
         0, &\mbox{ if } t\in [T_i,T_{i+1}], r\in [T_j,T_{j+1}], i\neq j,   \\ \\
          K_A(r,s)a(s,t),  &\mbox{ otherwise}.
    \end{array}
    \right. 
\end{equation}
\end{thm}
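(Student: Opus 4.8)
The plan is to convert the Markov property of the Gaussian process $(A_t^{(n)})$ into a condition on the covariance $K_A$ alone, using two standard facts: for a centred Gaussian family, $\E[A_t^{(n)}\mid\F^A_s]$ equals the orthogonal $L^2$-projection of $A_t^{(n)}$ onto $\overline{\mathrm{span}}\{A_u^{(n)}:u\leqslant s\}$; and such a process is Markov with respect to $(\F^A_t)$ precisely when, for every $s\leqslant t$, this projection lies in the line $\mathrm{span}\{A_s^{(n)}\}$, i.e. $\E[A_t^{(n)}\mid\F^A_s]=\E[A_t^{(n)}\mid A_s^{(n)}]$. Because $\mu_A$ is deterministic, replacing $(A_t^{(n)})$ by $(A_t^{(n)}-\mu_A(t))$ changes neither the filtration, nor $K_A$, nor the Markov property, so I would work with a centred process throughout; I write $I_i:=[T_i,T_{i+1}]$.

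\emph{Necessity.} Suppose $(A_t^{(n)})$ is Markov and fix $(r,s,t)\in(T_0,T_n)_*^3$ with $r\leqslant s<t$. If $r\in I_j$ and $t\in I_i$ with $i\neq j$, the remark after Proposition~\ref{ind} already gives $K_A(r,t)=0$, the first line of \eqref{form1}. Otherwise $r,s,t$ all lie in one $I_i$; set $a(s,t):=K_A(s,t)/\sigma_A^2(s)$, the slope of the linear regression of $A_t^{(n)}$ on $A_s^{(n)}$, well defined when $\sigma_A^2(s)>0$. The Markov property gives $\E[A_t^{(n)}\mid\F^A_s]=a(s,t)\,A_s^{(n)}$, and taking the $L^2$-inner product of this identity with the $\F^A_s$-measurable variable $A_r^{(n)}$ yields $K_A(r,t)=a(s,t)\,K_A(r,s)$, the second line. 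That $a(s,t)\neq0$ follows from non-degeneracy of $(A_t^{(n)})$ on the open intervals: a non-degenerate continuous Gauss--Markov covariance on an interval factorises as $K_A(u,v)=g(u\wedge v)\,h(u\vee v)$ with $g,h$ nowhere vanishing, hence it never vanishes there. The genuinely degenerate situations — where $\sigma_A^2$ vanishes at an interior point, so the $A^{(n)}$'s concerned are identically $0$ and all relevant covariances are $0$ — are handled directly.

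\emph{Sufficiency.} Conversely, assume $K_A$ has the form \eqref{form1} and fix $s\leqslant t$ in $[T_0,T_n]$. It suffices to produce a constant $c$ with $K_A(r,t)=c\,K_A(r,s)$ for every $r\leqslant s$: then $A_t^{(n)}-c\,A_s^{(n)}$ is orthogonal to $\overline{\mathrm{span}}\{A_r^{(n)}:r\leqslant s\}$, so $\E[A_t^{(n)}\mid\F^A_s]=c\,A_s^{(n)}\in\mathrm{span}\{A_s^{(n)}\}$, which is the Markov property. When $\sigma_A^2(s)>0$ one takes $c=K_A(s,t)/\sigma_A^2(s)$ (this equals $a(s,t)$ on setting $r=s$ in \eqref{form1}) and checks $K_A(r,t)=c\,K_A(r,s)$ by inspecting which $I_i$ contains each of $r,s,t$: if all three lie in the same $I_i$ it is the second line of \eqref{form1}; if not, then $r$ and $t$ lie in different $I_i$'s, so $K_A(r,t)=0$ by the first line, and $s$ lies in a different $I_i$ from $r$ or from $t$, so $K_A(r,s)\,K_A(s,t)=0$ and hence $c\,K_A(r,s)=0$ too. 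The remaining configurations — $\sigma_A^2(s)=0$ (so $A_s^{(n)}\equiv0$ and $K_A(\cdot,s)\equiv0$), or $s$ or $t$ equal to some $T_i$ — reduce to $A_t^{(n)}\ind\F^A_s$, which follows from \eqref{form1}, $A_{T_i}^{(n)}=0$ and joint Gaussianity.

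\emph{Main obstacle.} The delicate points are the projection characterisation of Gaussian Markovianity and its careful use at the pinning times $T_i$, where the filtration gains no real information, so there the equivalence must be run through joint Gaussianity and the cross-interval vanishing of $K_A$ rather than through a regression coefficient; this is also where path-continuity of $(A_t^{(n)})$ (equivalently, continuity of $K_A$) enters, ruling out that $a(s,t)$ collapses to $0$ inside some $I_i$. Everything else is bookkeeping about which $I_i$ contains each of $r,s,t$.
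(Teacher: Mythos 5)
Your proof is correct, and the necessity direction coincides with the paper's: both derive $K_A(r,t)=K_A(r,s)\,K_A(s,t)/K_A(s,s)$ within an arc from the Gaussian regression formula plus the Markov property, and both get the cross-arc vanishing from Proposition \ref{ind}. The sufficiency direction, however, is packaged genuinely differently. The paper fixes finitely many times $s_1<\dots<s_k<t$, reduces to a single sub-interval, and explicitly builds auxiliary variables $\Delta_q=\sum_i c_{i,q}A^{(n)}_{s_i}$ subject to $\sum_i c_{i,q}K_A(s_i,t)=0$ and a determinant condition, then checks the orthogonality of $A^{(n)}_t$ and $A^{(n)}_{s_k}$ to the $\Delta_q$; the key step is the implication $\sum_i c_{i,q}K_A(s_i,t)=0\Rightarrow\sum_i c_{i,q}K_A(s_k,s_i)=0$, which uses exactly the factorisation $K_A(s_i,t)=a(s_k,t)K_A(s_k,s_i)$ that your argument also exploits. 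You instead invoke the $L^2$-projection characterisation of conditional expectation for jointly Gaussian families and exhibit a single constant $c$ with $A^{(n)}_t-cA^{(n)}_s\perp\overline{\mathrm{span}}\{A^{(n)}_r:r\leqslant s\}$, which yields $\E[A^{(n)}_t\mid\F^A_s]=cA^{(n)}_s$ directly, bypassing the finite-time reduction and the determinant bookkeeping at the cost of citing (without proof) that conditional expectation equals orthogonal projection here. Your write-up is also more careful than the paper's on two points the paper glosses over: the behaviour at the pinning times $T_i$ (where $A^{(n)}_{T_i}=0$ and the Markov property must be checked via $A^{(n)}_t\ind\F^A_{T_i}$ rather than a regression coefficient) and the claim that $a(s,t)$ can be taken nonzero, which the paper's computation $a(s,t)=K_A(s,t)/K_A(s,s)$ does not by itself guarantee.
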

\begin{proof}
Suppose $K_A$ is of the form Eq. (\ref{form1}), we shall show that $(A_t^{(n)})$ is Markov. Let $k>1$ and $(s_1 , s_2 , \ldots , s_k , t ) \in [T_0,T_n] ^{k+1}$ such that $s_1 < s_2 < \ldots < s_k < t$. Then, $(A_t^{(n)})$ is Markov if and only if
\begin{equation}
\mathbb{P}\left[A_{t}^{(n)} \in \cdot \mid   A_{s_1}^{(n)}, \ldots, A_{s_{k}}^{(n)}\right] = \mathbb{P}\left[A_{t}^{(n)} \in \cdot \mid  A_{s_k}^{(n)} \right],
\end{equation}
see Theorem 1.12 in \cite{Lipster}. Since $K_A(s,t)=0$ unless $s$ and $t$ are in the same sub-interval, we can assume without loss of generality that $(s_1 , s_2 , \ldots , s_k , t ) \in  (T_m,T_{m+1})^{k+1}$ for a fixed $m \in \{0,\ldots,n-1\}$. Define
\begin{equation}
    \Delta_q=\sum_{i=1}^{k}  c_{i,q}  A_{s_i}^{(n)}, \quad q=1,\ldots,k-1,
\end{equation}
where the coefficients $(c_{i,q})$ are real numbers satisfying
\begin{equation} \label{coeffrel}
\sum\limits_{i=1}^{k}  c_{i,q}  K_A(s_i,t)=0 \quad \mbox{ and } \quad \mathbb \det
\begin{pmatrix}
c_{1,1}  & \ldots & c_{1,k-1}  \\
\vdots &  & \vdots  \\
c_{k-1,1}  & \ldots & c_{k-1,k-1}   \\
\end{pmatrix} \neq 0.
\end{equation}
We notice that 
\begin{equation}
    \sigma (A_{s_1}^{(n)}, \ldots, A_{s_{k}}^{(n)}) = \sigma (\Delta_1, \ldots, \Delta_{k-1}, A_{s_k}^{(n)}) \iff \mathbb \det
\begin{pmatrix}
c_{1,1}  & \ldots & c_{1,k-1}  \\
\vdots &  & \vdots  \\
c_{k-1,1}  & \ldots & c_{k-1,k-1}   \\
\end{pmatrix} \neq 0.
\end{equation}
It remains to be shown that $A_{t}^{(n)} \ind (\Delta_1, \ldots, \Delta_{k-1}) \text{ and } A_{s_k}^{(n)} \ind (\Delta_1, \ldots, \Delta_{k-1}).$ Equivalently, because we are treating the Gaussian case, we shall show that $ \cov[A_{t}^{(n)}, \\ \Delta_q]=0 \mbox{ and } \cov[A_{s_k}^{(n)},\Delta_q]=0 \mbox{ for } q=1,\ldots,k-1.$
Expanding these covariances, we get 
\begin{equation}
    \cov[A_{t}^{(n)},\Delta_q]= 
    \sum_{i=1}^{k}  c_{i,q} K_A(s_i,t)=0,
 \end{equation}
which is guaranteed by Eq. (\ref{coeffrel}), and 
 \begin{equation}
    \cov[A_{s_k}^{(n)},\Delta_q]= 
    \sum_{i=1}^{k}  c_{i,q} K_A(s_k,s_i)=0.
 \end{equation}
The first equation implies the second because $K_A(s_i,t) = 0 \implies K_A(s_k,s_i) =0$ by Eq. (\ref{form1}), and, when $K_A(s_i,t) \neq 0$, we have $K_A(s_i,t) = a(s_k,t)K_A(s_k,s_i)$, so that
\begin{equation}
\sum\limits_{i=1}^{k} c_{i,q} K_A(s_i,t)=0 \implies \sum\limits_{i=1}^{k}  c_{i,q} a(s_k,t)K_A(s_k,s_i)=0 \implies \sum\limits_{i=1}^{k}  c_{i,q} K_A(s_k,s_i)=0.
\end{equation}
This concludes one implication.

For the converse, suppose without loss of generality that the driver has mean $0$. We observe that $K_A(x,y)=0$ if $x\in [T_i,T_{i+1}], y\in [T_j,T_{j+1}], i\neq j$. Let $(r,s,t) \in (T_i,T_{i+1})^3$ be such that $r\leqslant s <t$. Since $(A_{t}^{(n)})$ is Gaussian, we have 
\begin{equation}
    \E[A_{t}^{(n)} \mid A_{s}^{(n)} ] = \frac{K_A(s,t)}{K_A(s,s)}A_{s}^{(n)}.
\end{equation}
Using the Markov property of $(A_t^{(n)})$, we get 
\begin{equation} \label{blabla}
    K_A(r,t) = \E [ A_{r}^{(n)}A_{t}^{(n)}   ] = \E [ \E [ A_{r}^{(n)}A_{t}^{(n)}    \mid A_{s}^{(n)} ] ] = \E[\E [A_{r}^{(n)} \mid A_{s}^{(n)} ] \E [ A_{t}^{(n)} \mid A_{s}^{(n)} ] ],
\end{equation}
which implies 
\begin{equation}
    K_A(r,t) = \E \left[\frac{K_A(s,t)}{K_A(s,s)}A_{s}^{(n)} \frac{K_A(r,s)}{K_A(s,s)}A_{s}^{(n)} \right] = \frac{K_A(s,t)K_A(r,s)}{K_A(s,s)}.
\end{equation}
Hence $\displaystyle{a(s,t) = \frac{K_A(s,t)}{K_A(s,s)}}$.
\end{proof}
We can simplify the statement of Theorem \ref{thmmarkov} in the following way.
\begin{coro} \label{coromarkov}
A Gaussian AP $(A_t^{(n)})_{t \in [T_0,T_n]}$ is Markov with respect to its own filtration if and only if there exist real functions $A_1 :  [T_0,T_n]  \rightarrow \R $ and $A_2  : [T_0,T_n] \rightarrow \R $ such that 
\begin{equation} 
    K_A(s,t)=\sum_{i=0}^{n-1} A_1(\min(s,t))A_2(\max(s,t)) \1_{(T_i,T_{i+1})}(s,t).
\end{equation}
\end{coro}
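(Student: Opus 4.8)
\emph{Strategy.} I would prove Corollary~\ref{coromarkov} by showing that the displayed factorisation of $K_A$ is equivalent to condition~\eqref{form1} of Theorem~\ref{thmmarkov}; since that theorem already identifies~\eqref{form1} with the Markov property, the Corollary follows. The starting observation is that the factor $\1_{(T_i,T_{i+1})}(s,t)$ takes care of the ``cross-interval'' alternative in~\eqref{form1} on its own: when $s$ and $t$ lie in different open subintervals, or when one of them is a partition point, every summand of $\sum_i A_1(\min(s,t))A_2(\max(s,t))\1_{(T_i,T_{i+1})}(s,t)$ vanishes, and this is consistent because $A^{(n)}_{T_j}=0$ for all $j$ forces $K_A(s,t)=0$ whenever $s$ or $t$ equals some $T_j$. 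So the whole content of the equivalence is confined to a single subinterval $(T_i,T_{i+1})$, where the indicator is identically $1$.

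\emph{Factorisation $\Rightarrow$ \eqref{form1}.} Assume $A_1,A_2$ as in the statement exist, and take $r\leqslant s<t$ in $(T_0,T_n)_*$. If $r$ and $t$ are not in the same open subinterval, then $K_A(r,t)=0$ and the first case of~\eqref{form1} holds; otherwise $r,s,t$ all lie in one $(T_i,T_{i+1})$, so $K_A(r,t)=A_1(r)A_2(t)=\big(A_1(r)A_2(s)\big)\tfrac{A_2(t)}{A_2(s)}=K_A(r,s)\,a(s,t)$ with $a(s,t):=A_2(t)/A_2(s)$, which is independent of $r$. Under non-degeneracy ($K_A(u,u)>0$ on $(T_0,T_n)_*$) one has $A_1(u)A_2(u)\neq0$, hence $A_2(u)\neq0$, so $a(s,t)\in\R_0$; Theorem~\ref{thmmarkov} then yields that $(A^{(n)}_t)$ is Markov.

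\emph{\eqref{form1} $\Rightarrow$ factorisation.} Conversely, the proof of Theorem~\ref{thmmarkov} shows $a(s,t)=K_A(s,t)/K_A(s,s)$, so inside each subinterval the triangular identity
\begin{equation*}
K_A(r,t)\,K_A(s,s)=K_A(r,s)\,K_A(s,t),\qquad r\leqslant s\leqslant t,
\end{equation*}
holds (the equal-argument cases being trivial). Fixing a reference point $t_i^\star\in(T_i,T_{i+1})$ for each $i$, I would set, for $u\in(T_i,T_{i+1})$,
\begin{equation*}
A_1(u):=\begin{cases} K_A(t_i^\star,u), & u\leqslant t_i^\star,\\ K_A(u,u)\,K_A(t_i^\star,t_i^\star)/K_A(t_i^\star,u), & u\geqslant t_i^\star,\end{cases}
\qquad
A_2(u):=\begin{cases} K_A(t_i^\star,u)/K_A(u,u), & u\leqslant t_i^\star,\\ K_A(t_i^\star,u)/K_A(t_i^\star,t_i^\star), & u\geqslant t_i^\star,\end{cases}
\end{equation*}
and $A_1=A_2=0$ at the partition points. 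Both branches agree at $u=t_i^\star$, and a short case check according to whether $s,t$ both lie to the left of, straddle, or both lie to the right of $t_i^\star$ — each case reducing to exactly one instance of the triangular identity — gives $K_A(s,t)=A_1(\min(s,t))A_2(\max(s,t))$ for all $s,t\in(T_i,T_{i+1})$. Summing over $i$, and noting that the values at the $T_j$ are irrelevant since the indicator annihilates those terms while $K_A$ vanishes there anyway, produces the displayed expression.

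\emph{Main obstacle.} The genuine difficulty is degeneracy: both directions divide by quantities of the form $K_A(u,u)$ and $K_A(t_i^\star,u)$, and the choice $a(s,t)=A_2(t)/A_2(s)$ likewise needs these nonzero. I would handle this by restricting to the non-degenerate regime $K_A(u,u)>0$ on $(T_0,T_n)_*$ — exactly the tacit hypothesis under which the proof of Theorem~\ref{thmmarkov} divides by $K_A(s,s)$ — and by treating an interior point $u$ with $K_A(u,u)=0$ separately: there $A^{(n)}_u$ is almost surely constant, positive semi-definiteness of $K_A$ forces the whole row $K_A(u,\cdot)$ to vanish, and the factorisation is extended by $A_1(u)=A_2(u)=0$. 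The only other point to keep track of is that a \emph{separate} reference point $t_i^\star$ is used in each subinterval, so that a single pair $(A_1,A_2)$ is obtained globally on $[T_0,T_n]$.
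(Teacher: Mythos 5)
Your overall strategy --- reduce the statement to condition \eqref{form1} of Theorem \ref{thmmarkov}, let the indicator absorb the cross-interval case, and build $A_1,A_2$ on each subinterval from a reference point $t_i^\star$ via the triangular identity --- is the same as the paper's (which takes $t_i^\star=(T_i+T_{i+1})/2$), and your forward direction and your treatment of degeneracy are fine. Your instinct that a two-branch definition around $t_i^\star$ is needed to cover the case where $s$ and $t$ lie on the same side of the reference point is also the right one; the paper verifies the factorisation only when $t_i^\star$ separates the two arguments.

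However, the branch $u\leqslant t_i^\star$ of your definition of $A_2$ is inverted, and the case ``$s,t$ both to the left of $t_i^\star$'' fails with the formulas as written. For $s\leqslant t\leqslant t_i^\star$ your definitions give
\begin{equation*}
A_1(s)A_2(t)=K_A(t_i^\star,s)\,\frac{K_A(t_i^\star,t)}{K_A(t,t)},
\end{equation*}
while the triangular identity $K_A(s,t_i^\star)K_A(t,t)=K_A(s,t)K_A(t,t_i^\star)$ turns this into
\begin{equation*}
A_1(s)A_2(t)=K_A(s,t)\left(\frac{K_A(t,t_i^\star)}{K_A(t,t)}\right)^{2},
\end{equation*}
which is not $K_A(s,t)$ in general. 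Concretely, for the anticipative Brownian bridge on $[0,1]$ with $t^\star=\tfrac12$ and $u\leqslant\tfrac12$, your formula gives $A_2(u)=\frac{u/2}{u(1-u)}=\frac{1}{2(1-u)}$, the reciprocal of the correct factor $1-u$ (up to a constant), so $A_1(s)A_2(t)=\frac{s}{4(1-t)}\neq s(1-t)$. The fix is to set $A_2(u)=K_A(u,u)/K_A(t_i^\star,u)$ for $u\leqslant t_i^\star$ (equivalently $A_2(u)=K_A(u,u)/A_1(u)$ there); with that correction each of your three cases does reduce to a single instance of the triangular identity and the argument closes. Note that your continuity check at $u=t_i^\star$ cannot detect the error, since both the correct and the inverted formula evaluate to $1$ at that point.
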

\begin{proof}
If
\begin{equation}
    K_A(s,t)=\sum_{i=0}^{n-1} A_1(\min(s,t))A_2(\max(s,t)) \1_{(T_i,T_{i+1})}(s,t),
\end{equation}
then $\forall \, (r,s,t) \in (T_0,T_n)_*^3$ such that $r \leqslant s < t$, one has
\begin{equation}
    K_A(r,t)=\left\{
    \begin{array}{cc}
         0, &\mbox{ if } t\in [T_i,T_{i+1}], r\in [T_j,T_{j+1}], i\neq j,   \\ \\
          K_A(r,s) \frac{A_2(t)}{A_2(s)},  &\mbox{ otherwise}.
    \end{array}
    \right. 
\end{equation}
Hence, $(A_t^{(n)})$ is Markov. 

Conversely, suppose that $(A_t^{(n)})$ is Markov, and let $T_m \in \{T_0,T_{n-1}\}_*$. Then, 
\begin{equation}
    \frac{K_A(r,t)}{K_A\left(r,\frac{T_m + T_{m+1}}{2}\right)} = \frac{K_A\left(\frac{T_m + T_{m+1}}{2},t\right)}{K_A\left(\frac{T_m + T_{m+1}}{2},\frac{T_m + T_{m+1}}{2}\right)} 
\end{equation}
for any $(r,t)\in (T_m,T_{m+1})^2$ such that $r<\frac{T_m + T_{m+1}}{2} < t$ by Theorem \ref{thmmarkov}. Hence, if
\begin{equation}
    A_1(x) = \sum_{i=0}^{n-1} K_A\left(x, \frac{T_i + T_{i+1}}{2}\right) \1_{(T_i,T_{i+1})} (x),
\end{equation}
 and 
 \begin{equation}
      A_2(x) = \sum_{i=0}^{n-1} \frac{K_A\left(\frac{T_i + T_{i+1}}{2}, x\right)} {K_A\left( \frac{T_i + T_{i+1}}{2},  \frac{T_i + T_{i+1}}{2}\right )} \1_{(T_i,T_{i+1})} (x),
 \end{equation}
we have 
\begin{equation}
    K_A(s,t)=\sum_{i=0}^{n-1} A_1(\min(s,t))A_2(\max(s,t)) \1_{(T_i,T_{i+1})}(s,t).
\end{equation}
\end{proof}
If $T_i \in \{T_0,T_{n-1}\}_*$ and $(s,t) \in (T_i,T_{i+1})^2$ such that $s<t$, then $\lim\limits_{s\rightarrow T_i} A_1(s)=0$ or $\lim\limits_{t\rightarrow T_{i+1}}  A_2(t)=0$ by continuity of $K_A$, and $A_1/A_2(t)$ is positive and non-decreasing on each interval $(T_i,T_{i+1})$, since $K_A$ is a covariance function.

Starting from a Gauss-Markov driver $(D_t)$, it is always possible to construct a Markovian $(A_t^{(n)})$ by applying the following procedure.
\begin{thm}
For any Gauss-Markov driver $(D_t)_{t\in [T_0,T_n]}$, there exists an AP $(A_t^{(n)})_{t \in [T_0,T_n]}$, driven by $(D_t)$, that is Markovian.
\end{thm}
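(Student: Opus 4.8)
The plan is to exhibit \emph{one} admissible family $f_0,\ldots,f_n$ of interpolating coefficients whose driven AP is Markov; the efficient route is to arrange that the covariance $K_A$ be of the product type isolated in Corollary~\ref{coromarkov}, which then delivers the Markov property at once. Since the Markov property of a Gaussian AP is read off $K_A$ alone, and $K_A$ is unchanged when the driver is re-centred, I would first assume $(D_t)$ centred. Then I would invoke the classical factorisation of centred Gauss--Markov covariances: there exist continuous $\rho,\gamma$ on $[T_0,T_n]$ with $\psi:=\rho/\gamma$ non-decreasing and $K_D(s,t)=\rho(s\wedge t)\,\gamma(s\vee t)$, positive wherever $D_t$ is non-degenerate. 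The coefficients I would choose are the linear-regression coefficients, cell by cell: on $(T_j,T_{j+1})$ declare $f_j(t)D_{T_j}+f_{j+1}(t)D_{T_{j+1}}=\E[D_t\|D_{T_j},D_{T_{j+1}}]$, put $f_k\equiv 0$ there for $k\neq j,j+1$, and $f_k(T_i)=\delta_{ki}$ at the knots. Equivalently, the AP so obtained equals, on each $[T_j,T_{j+1}]$, the anticipative Gauss--Markov bridge of $(D_t)$ pinned to its endpoint values --- this is a ``standard AP''.

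The second step is to confirm that the $f_k$ are bona fide interpolating coefficients. Continuity on each open cell is inherited from $K_D$; near a knot $T_i$ the only non-vanishing coefficients are those produced by the two adjacent regressions, and from the closed form of the regression coefficients in terms of $\psi$ one checks that the coefficient of $D_{T_i}$ tends to $1$ while that of the far endpoint tends to $0$ as $t\to T_i$ from either side, so each $f_k$ is continuous with $f_k(T_i)=\delta_{ki}$. (If $D_{T_i}=0$ almost surely for some knot $T_i$ the corresponding regression degenerates, but then $f_i(t)D_{T_i}\equiv 0$ and one may simply redefine $f_i$ to be, e.g.\ the stitched-Brownian coefficient in order to keep it continuous, without altering the process.)

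The third and decisive step is the covariance computation. With $A_t=D_t-\E[D_t\|D_{T_j},D_{T_{j+1}}]$ on the cell of $t$, a direct calculation using $K_D(s,t)=\rho(s\wedge t)\gamma(s\vee t)$ yields, for $s,t\in(T_j,T_{j+1})$,
\[
K_A(s,t)=\gamma(s\wedge t)\big(\psi(s\wedge t)-\psi(T_j)\big)\,\cdot\,\frac{\gamma(s\vee t)\big(\psi(T_{j+1})-\psi(s\vee t)\big)}{\psi(T_{j+1})-\psi(T_j)},
\]
i.e.\ a product $A_1(\min(s,t))A_2(\max(s,t))$ on $(T_j,T_{j+1})$. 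For $s,t$ in different cells one has $K_A(s,t)=0$: the bridge residual $A_s$ on $(T_j,T_{j+1})$ satisfies $\E[A_s\|D_{T_j},D_{T_{j+1}}]=0$ and, by the Markov property of $(D_t)$, is independent of $\sigma(D_u:u\leqslant T_j)\vee\sigma(D_u:u\geqslant T_{j+1})$ --- a $\sigma$-field that contains every $A_t$ with $t$ in a later cell --- hence orthogonal to such $A_t$. Thus $K_A$ has precisely the shape required by Corollary~\ref{coromarkov}, so $(A_t^{(n)})$ is Markov with respect to its own filtration.

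The real obstacle is this last step: one must see \emph{simultaneously} that $K_A$ factorises inside each cell and vanishes across cells, which rests on the standard --- but not quite trivial --- fact that the bridge of a Markov process is independent of the data outside the bridging interval (for Gaussian drivers this is itself just a covariance check). The continuity bookkeeping at a degenerate knot, and the non-uniqueness of the construction (a genuinely different admissible choice of $f_i$ is available whenever an endpoint value is degenerate, reflecting the ``standard AP is not unique'' remark made above), are by comparison minor points.
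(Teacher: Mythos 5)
Your proposal is correct and follows essentially the same route as the paper: both construct the standard AP by taking the interpolating coefficients to be the Gaussian regression coefficients of $D_t$ on the knot values, exhibit the separable form of $K_A$ via the factorisation $K_D(s,t)=H_1(\min(s,t))H_2(\max(s,t))$, and conclude with Corollary~\ref{coromarkov}. The only cosmetic difference is that the paper sets up the regression on all $n+1$ knots simultaneously through a linear system while you regress on the two adjacent knots cell by cell --- these coincide by the Markov property of the driver (cf.\ Corollary~\ref{coroexplicit}) --- and your explicit verification that $K_A$ vanishes across cells makes precise a point the paper leaves implicit.
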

\begin{proof}
Let $T_m \in \{T_0,T_{n-1}\}_*$ and $(s,t)\in (T_m,T_{m+1})^2$ be such that $s<t$. Recall Definition \ref{interpolating} and choose the interpolating coefficients $f_0,\ldots, f_n$ so that they satisfy
\begin{equation}
\begin{pmatrix}
K_D(T_0,T_0) & \ldots & K_D(T_0,T_n)\\
\vdots & \ddots & \vdots \\
K_D(T_n,T_0) & \ldots & K_D(T_n,T_n)
\end{pmatrix}
\begin{pmatrix}
f_0(\cdot)\\
\vdots \\
f_n(\cdot)
\end{pmatrix} = 
\begin{pmatrix}
K_D(\cdot,T_0)\\
\vdots \\
K_D(\cdot,T_n)
\end{pmatrix}.
\end{equation}
Then,
\begin{equation} \label{rel}
    \sum_{j=0}^n f_j(\cdot) K_D(T_i,T_j) - K_D(\cdot,T_i)=0, \quad \forall i=0,\ldots, n,
\end{equation}
which implies
\begin{equation}
    K_A(x,y)= K_D(x,y) - \sum_{i=0}^n f_i(x) K_D(y,T_i) = K_D(x,y) - \sum_{i=0}^n f_i(y) K_D(x,T_i),
\end{equation}
$\forall (x,y) \in [T_0,T_n]^2$. Let $T_m \in \{T_0,T_{n-1}\}_*$ and $(s,t)\in (T_m,T_{m+1})^2$ such that $s<t$. Recalling that $(D_t)$ is Gauss-Markov, there exist two functions, $H_1: [T_m,T_{m+1}] \rightarrow \R $ and $H_2 : [T_m,T_{m+1}] \rightarrow \R$, such that $K_D(s,t)=H_1(s)H_2(t)$. So, one can write
\begin{align}
K_A(s,t)&=H_1(s)H_2(t) - \sum_{i=0}^m f_i(s) H_1(T_i)H_2(t) - \sum_{i=m+1}^n f_i(s) H_1(t)H_2(T_i) \nn \\ 
&=H_2(t)\left(H_1(s)-  \sum_{i=0}^m f_i(s) H_1(T_i)\right) - H_1(t) \sum_{i=m+1}^n f_i(s) H_2(T_i)\nn \\
&= \left(\sum_{i=m+1}^n f_i(s) H_2(T_i)\right) \left( \frac{H_1(T_{m+1})}{H_2(T_{m+1})} H_2(t) - H_1(t)  \right)
\end{align}
where we used Eq. (\ref{rel}) with $i=m+1$. Hence,
\begin{align}
    A_1(x)\1_{(T_m,T_{m+1})}(x) &= \left(\sum_{i=m+1}^n f_i(x) H_2(T_i)\right) \1_{(T_m,T_{m+1})}(x), \label{A1} \\
    A_2(x) \1_{(T_m,T_{m+1})}(x) &= \left( \frac{H_1(T_{m+1})}{H_2(T_{m+1})} H_2(x) - H_1(x)  \right) \1_{(T_m,T_{m+1})}(x), \label{A2}
\end{align}
and $(A_t^{(n)})$ is Markov by Corollary \ref{coromarkov}.
\end{proof}
We recall that the property is imposed on all drivers $(D_t)$ that for all $c \in \R$, $\P[D_t=c]<1$ for all $t\in (T_0,T_n)_*$. If we extend this property to $[T_0,T_n]$ instead, then the above construction of a Markovian AP becomes explicit.
\begin{coro} \label{coroexplicit}
If $(D_t)$ is a Gauss-Markov process such that for all $c \in \R$, $\P[D_t=c]<1$ for all for $t\in [T_0,T_n]$, with $K_D(s,t)=H_1(\min(s,t)) H_2 (\max(s,t))$ for all $(s,t) \in [T_0,T_n]^2$ for some real functions $H_1$ and $H_2$, then the solution to Eq. (\ref{rel}) is given by 
\begin{align}
    f_0(x)&= \frac{H_1(T_{1})H_2(x)-H_1(x)H_2(T_1)}{H_1(T_{1})H_2(T_0)-H_1(T_0)H_2(T_1)} \1_{[T_0, T_1]} (x),\\
    f_i(x)&=\frac{H_1(x)H_2(T_{i-1})-H_1(T_{i-1})H_2(x)}{H_1(T_{i})H_2(T_{i-1})-H_1(T_{i-1})H_2(T_{i})} \1_{[T_{i-1}, T_i]}  (x) \nn \\ 
    & \quad + \frac{H_1(T_{i+1})H_2(x)-H_1(x)H_2(T_{i+1})}{H_1(T_{i+1})H_2(T_{i})-H_1(T_{i})H_2(T_{i+1})} \1_{(T_{i}, T_{i+1}]} (x) , \quad \text{for } i=1,\ldots, n-1,  \\
    f_n(x)&= \frac{H_1(x)H_2(T_{n-1})-H_1(T_{n-1})H_2(x)}{H_1(T_n)H_2(T_{n-1})-H_1(T_{n-1})H_2(T_n)} \1_{(T_{n-1}, T_n]} (x).
\end{align}
\end{coro}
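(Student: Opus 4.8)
The plan is a direct verification: I will check that the displayed functions $f_0,\ldots,f_n$ are interpolating coefficients on $\{T_0,T_n\}_*$ and that they satisfy the linear system (\ref{rel}). Since the Gram matrix $(K_D(T_i,T_j))_{0\le i,j\le n}$ is non-degenerate under the stated hypothesis that $\P[D_t=0]<1$ on all of $[T_0,T_n]$ (together with the Gauss--Markov structure, which here supplies the factorisation $K_D(s,t)=H_1(\min(s,t))H_2(\max(s,t))$ as a hypothesis rather than as something to be derived), this identifies the $f_i$ as the solution of the system that the construction in the preceding theorem calls for.

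First I would dispose of the interpolating-coefficient property. On each open sub-interval $(T_i,T_{i+1})$ the $f_j$ are ratios of continuous functions of $x$ (the continuity of $H_1,H_2$ being inherited from the kernel $K_D$), with denominators of the type $H_1(T_{i+1})H_2(T_i)-H_1(T_i)H_2(T_{i+1})$; multiplying such a denominator by $H_1(T_i)H_2(T_{i+1})$ gives $\var(D_{T_i})\var(D_{T_{i+1}})-\cov(D_{T_i},D_{T_{i+1}})^2$, which is non-zero by Cauchy--Schwarz and non-degeneracy, so the denominators do not vanish. Evaluating a piece such as $\bigl(H_1(x)H_2(T_{i-1})-H_1(T_{i-1})H_2(x)\bigr)/\bigl(H_1(T_i)H_2(T_{i-1})-H_1(T_{i-1})H_2(T_i)\bigr)$ at $x=T_i$ yields $1$ and at $x=T_{i-1}$ yields $0$; running this check on the two pieces that make up each $f_i$ gives $f_i(T_i)=1$, the matching of one-sided limits at the knots (hence $f_i\in C^0([T_0,T_n],\R)$), and $f_i(T_j)=0$ for $j\ne i$ (which is in any case immediate from the supports of the indicators). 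So $f_0,\ldots,f_n$ are interpolating coefficients.

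The core step is verifying (\ref{rel}), i.e.\ $\sum_{j=0}^n f_j(x)K_D(T_i,T_j)=K_D(x,T_i)$ for every $i\in\{0,\ldots,n\}$ and $x\in[T_0,T_n]$. I would fix $x$ in a sub-interval $[T_{m-1},T_m]$ (the flanking intervals $[T_0,T_1]$ and $[T_{n-1},T_n]$ are the special cases $m=1$ and $m=n$, where $f_0$, resp.\ $f_n$, contributes only its single piece; the knots themselves are already covered by the previous step). By the supports of the indicators only $f_{m-1}(x)$ and $f_m(x)$ are non-zero, so the left-hand side equals $f_{m-1}(x)K_D(T_i,T_{m-1})+f_m(x)K_D(T_i,T_m)$, and I split into $i\le m-1$ and $i\ge m$. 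For $i\le m-1$ one has $T_i\le T_{m-1}\le x\le T_m$, so the factorisation of $K_D$ reduces the claim (after cancelling the common factor $H_1(T_i)$) to the scalar identity $f_{m-1}(x)H_2(T_{m-1})+f_m(x)H_2(T_m)=H_2(x)$; for $i\ge m$ one has $x\le T_m\le T_i$ and it reduces (after cancelling $H_2(T_i)$) to $f_{m-1}(x)H_1(T_{m-1})+f_m(x)H_1(T_m)=H_1(x)$. Substituting the explicit formulas, with common denominator $\Delta_m:=H_1(T_m)H_2(T_{m-1})-H_1(T_{m-1})H_2(T_m)$, each of these collapses by a one-line cancellation: in the first identity the two cross-monomials carrying the factor $H_1(x)$ annihilate and what survives is $H_2(x)\Delta_m/\Delta_m$; in the second the two cross-monomials carrying $H_2(x)$ annihilate and what survives is $H_1(x)\Delta_m/\Delta_m$. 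This proves (\ref{rel}).

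I do not anticipate a genuine obstacle: once the case split $i\le m-1$ versus $i\ge m$ is in place, the algebra is a single cancellation. The only points requiring a little care are keeping the $\min/\max$ arguments of $K_D$ straight in $K_D(x,T_i)$, $K_D(T_i,T_{m-1})$ and $K_D(T_i,T_m)$ in each case; treating the two flanking intervals separately; and the (mild) non-degeneracy input, which is precisely where the hypothesis $\P[D_t=0]<1$ is needed on all of $[T_0,T_n]$ rather than only on $(T_0,T_n)_*$ — it rules out a degenerate (constant) $D_{T_i}$ and thereby both makes the $f_i$ well defined and makes the Gram matrix invertible, so that the exhibited solution of (\ref{rel}) is the unique one.
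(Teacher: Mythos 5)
Your proposal is correct and follows essentially the same route as the paper: fix $x$ in a sub-interval so that only two of the $f_j(x)$ are non-zero, split according to whether $T_i$ lies to the left or to the right of that sub-interval, and verify Eq.~(\ref{rel}) by the one-line cancellation that the factorisation $K_D(s,t)=H_1(\min(s,t))H_2(\max(s,t))$ makes possible. The additional checks you include (that the $f_i$ are genuinely interpolating coefficients, and that the denominators are non-degenerate) are left implicit in the paper but are welcome.
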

\begin{proof}
Let ($T_{m^-},T_m,T_{m^+}) \in \{T_0,T_{n-1}\}_*^3$ such that $T_{m^-}\leqslant T_m<T_{m^+}$, and $x \in (T_m,T_{m+1})$. Then,
\begin{align}
    &\sum_{j=0}^n f_j(x) K_D(T_{m^-},T_j) \nn \\
    &= f_m(x) H_1(T_{m^-}) H_2 (T_m) + f_{m+1}(x) H_1(T_{m^-}) H_2 (T_m) \nn \\
    &= \frac{H_1(T_{m+1})H_2(x)-H_1(x)H_2(T_{m+1})}{H_1(T_{m+1})H_2(T_{m})-H_1(T_{m})H_2(T_{m+1})} H_1(T_{m^-}) H_2 (T_m) \nn \\
    &\hspace{1.1cm}+ \frac{H_1(x)H_2(T_{m})-H_1(T_{m})H_2(x)}{H_1(T_{m+1})H_2(T_{m})-H_1(T_{m})H_2(T_{m+1})}  H_1(T_{m^-}) H_2 (T_{m+1}) \nn \\ \nn \\
    &= \frac{H_1(x) (H_1(T_{m^-})H_2(T_m) H_2 (T_{m+1}) -  H_1(T_{m^-}) H_2 (T_m)   H_2(T_{m+1}))}{H_1(T_{m+1})H_2(T_{m})-H_1(T_{m})H_2(T_{m+1})} \nn \\
    &\hspace{1.1cm}+\frac{H_2(x)(H_1(T_{m+1}) H_1(T_{m^-}) H_2 (T_m) - H_1(T_{m}) H_1(T_{m^-}) H_2 (T_{m+1}) ) }{H_1(T_{m+1})H_2(T_{m})-H_1(T_{m})H_2(T_{m+1})}\nn \\ \nn \\
    &=H_1(T_{m^-}) H_2(x) = K_D(T_{m^-},x)
\end{align}
The same argument applies to show $\displaystyle \sum_{j=0}^n f_j(x) K_D(T_{m^+},T_j) = K_D(x,T_{m^+}) $. Hence,
\begin{equation}
\begin{pmatrix}
K_D(T_0,T_0) & \ldots & K_D(T_0,T_n)\\
\vdots & \ddots & \vdots \\
K_D(T_n,T_0) & \ldots & K_D(T_n,T_n)
\end{pmatrix}
\begin{pmatrix}
f_0(\cdot)\\
\vdots \\
f_n(\cdot)
\end{pmatrix} = 
\begin{pmatrix}
K_D(\cdot,T_0)\\
\vdots \\
K_D(\cdot,T_n)
\end{pmatrix}.
\end{equation}
Next, recalling the steps following Eq. (\ref{rel}) concludes the proof.
\end{proof}
\begin{rem}
Notice that if there is a $c \in \R$ such that $\P[D_t=c]=1$ for some $t\in \{T_0,T_n\}_*$, one can still use the construction of the above Markovian AP by removing all the rows and columns of zeros in the matrix 
\begin{equation}
\begin{pmatrix}
K_D(T_0,T_0) & \ldots & K_D(T_0,T_n)\\
\vdots & \ddots & \vdots \\
K_D(T_n,T_0) & \ldots & K_D(T_n,T_n)
\end{pmatrix}.
\end{equation}
This will determine all the interpolating coefficients but the ones that are associated with the times $T_i \in \{T_0,T_n\}_*$ where $\P[D_{T_i}=c]=1$. Since the driver already matches $c$ at these times, their associated interpolating coefficients can be freely chosen without affecting the Markov property of the AP. This is illustrated by taking the Brownian motion with initial value $1$ as a driver with $T_0=0$. Then $f_0$ does not matter for the Markov property of the AP since it is multiplied by the deterministic function with constant value $1$ in the expression of the AP.
\end{rem}
\begin{rem}
Inserting the interpolating coefficients given in Corollary \ref{coroexplicit} into Eqs. (\ref{A1}) and (\ref{A2}) yields
\begin{align}
    A_1(x)\1_{(T_m,T_{m+1})}(x) &=\frac{H_1(x)H_2(T_{m})-H_1(T_{m})H_2(x)}{H_1(T_{m+1})H_2(T_{m})-H_1(T_{m})H_2(T_{m+1})} H_2(T_{m+1}) \1_{(T_m,T_{m+1})}(x),\\
    A_2(x) \1_{(T_m,T_{m+1})}(x) &= \left( \frac{H_1(T_{m+1})}{H_2(T_{m+1})} H_2(x) - H_1(x)  \right) \1_{(T_m,T_{m+1})}(x),
\end{align}
for $m=1,\ldots, n-1$.
\end{rem}
This method to construct Markovian APs presented here is not unique, but is natural, for the covariance structure of the driver completely and uniquely determines the interpolating coefficients. The resulting APs are called standard. They are anticipative representations of processes whose law is determined by the conditioned driver, i.e., if $(A_t^{(n)})$ is standard, then $\P[A_t^{(n)} \in \cdot \,]=\P[D_t \in \cdot \mid D_{T_0}=0, D_{T_1}=0, \ldots, D_{T_n}=0]$.
\begin{defn} \label{standard}
A standard AP $(A_t^{(n)})_{t \in [T_0,T_n]}$ is an AP, driven by a Gauss-Markov process $(D_t)$, of the form
\begin{equation}
A_{t}^{(n)}  = \left\{
    \begin{array}{ll}
         D_t - \frac{H_1(T_{1})H_2(t)-H_1(t)H_2(T_1)}{H_1(T_{1})H_2(T_0)-H_1(T_0)H_2(T_1)}  D_{T_0} \\ \\ \quad \quad \quad \quad \quad \quad \quad
         - \frac{H_1(t)H_2(T_{0})-H_1(T_0)H_2(t)}{H_1(T_{1})H_2(T_{0})-H_1(T_0)H_2(T_{1})} D_{T_1} & \mbox{for } t \in [T_0,T_{1}), \\ \\
         D_t - \frac{H_1(T_2)H_2(t)-H_1(t)H_2(T_2)}{H_1(T_2)H_2(T_1)-H_1(T_1)H_2(T_2)}  D_{T_1} \\ \\ \quad \quad \quad \quad \quad \quad \quad
         - \frac{H_1(t)H_2(T_{1})-H_1(T_{1})H_2(t)}{H_1(T_2)H_2(T_{1})-H_1(T_{1})H_2(T_{2})} D_{T_2} & \mbox{for } t \in [T_1,T_{2}), \\ \\
         \, \vdots \\ \\
         D_t - \frac{H_1(T_n)H_2(t)-H_1(t)H_2(T_n)}{H_1(T_n)H_2(T_{n-1})-H_1(T_{n-1})H_2(T_n)}  D_{T_{n-1}}  & \\ \\ \quad \quad \quad \quad \quad \quad \quad  -\frac{H_1(t)H_2(T_{n-1})-H_1(T_{n-1})H_2(t)}{H_1(T_n)H_2(T_{n-1})-H_1(T_{n-1})H_2(T_{n})} D_{T_n} & \mbox{for } t \in [T_{n-1},T_{n}],
    \end{array}
\right.
\end{equation}
where $H_1$ and $H_2$ are the real functions that appear in the covariance function of the driver: $K_D(s,t)=H_1(\min(s,t)) H_2 (\max(s,t))$ for all $ (s,t) \in [T_0,T_n]^2$.
\end{defn}
\begin{ex}
If $(D_t)$ is an Ornstein-Uhlenbeck process with parameters $\theta>0, \sigma>0, \mu \in \R$ and initial value $d_0\in \R$, that is, the solution to 
\begin{equation}
    \rd D_{t}=\theta\left(\mu-D_{t}\right) \rd t+\sigma \rd W_{t}, \quad D_0=d_0,
\end{equation}
then $K_D(s,t)=\sigma^2/(2 \theta) \exp{[\theta \min(s,t)]} \exp{[-\theta \max(s,t)]}$. The standard AP driven by $(D_t)$ is 
\begin{equation}
A_{t}^{(n)}  = \left\{
    \begin{array}{ll}
         D_t - \frac{\exp{[\theta (T_1-t)]}-\exp{[-\theta (T_1-t)]}}{\exp{[\theta (T_1-T_0)]}-\exp{[-\theta (T_1-T_0)]}}  D_{T_0}  \\ \\ \quad \quad \quad \quad \quad \quad \quad -\frac{\exp{[\theta (t-T_0)]}-\exp{[-\theta (t-T_0)]}}{\exp{[\theta (T_1-T_0)]}-\exp{[-\theta (T_1-T_0)]}}  D_{T_1} \quad &\mbox{for } t \in [T_0,T_{1}), \\ \\
         D_t -\frac{\exp{[\theta (T_2-t)]}-\exp{[-\theta (T_2-t)]}}{\exp{[\theta (T_2-T_1)]}-\exp{[-\theta (T_2-T_1)]}}  D_{T_1} \\ \\ \quad \quad \quad \quad \quad \quad \quad - \frac{\exp{[\theta (t-T_1)]}-\exp{[-\theta (t-T_1)]}}{\exp{[\theta (T_2-T_1)]}-\exp{[-\theta (T_2-T_1)]}} D_{T_2} \quad &\mbox{for } t \in [T_1,T_{2}), \\ \\
         \, \vdots \\ \\ 
         D_t - \frac{\exp{[\theta (T_n-t)]}-\exp{[-\theta (T_n-t)]}}{\exp{[\theta (T_n-T_{n-1})]}-\exp{[-\theta (T_n-T_{n-1})]}} D_{T_{n-1}} \\ \\
         \quad \quad \quad \quad \quad \quad \quad-\frac{\exp{[\theta (t-T_{n-1})]}-\exp{[-\theta (t-T_{n-1})]}}{\exp{[\theta (T_n-T_{n-1})]}-\exp{[-\theta (T_n-T_{n-1})]}} D_{T_n} \quad &\mbox{for } t \in [T_{n-1},T_{n}].
    \end{array}
\right.
\end{equation}
\end{ex}
There are infinitely many Markovian APs driven by the same Gauss-Markov driver. In general, when $T_m \in \{T_0,T_{n-1}\}_*$ and $(s,t)\in (T_m,T_{m+1})^2$ with $s<t$, we have 
\begin{align} 
    K_A(s,t)=&\left(H_1(s)-\sum_{i=0}^m f_i(s) H_1(T_i) \right)\left(H_2(t)-\sum_{i=m+1}^n f_i(t) H_2(T_i) \right) \nn \\ 
    &- \left(\sum_{i=m+1}^{n} f_i(s) H_2(T_i) \right) \left(H_1(t)-\sum_{i=0}^{m+1} f_i(t) H_1(T_i) \right)\nn \\
    &+ \left(\sum_{i=m+1}^{n} f_i(s) H_1(T_i) \right)\sum_{i=m+2}^{n} f_i(t) H_2(T_i)  \nn \\
    &- \left(H_2(s)-\sum_{i=0}^m f_i(s) H_2(T_i) \right)\sum_{i=0}^{m} f_i(t) H_1(T_i) ,  \label{KA}\
\end{align}
where we use the convention that an empty sum is equal to zero. There are as many Markovian APs driven by $(D_t)$ as there are ways to express the covariance function $K_A(s,t)$ as a product of a function of $s$ and a function of $t$.
\begin{ex} \label{nonstarcade}
If $(D_t)$ is a standard Brownian motion, applying Eq. (\ref{rel}) to find appropriate interpolating coefficients yields the stitched Brownian AP, see Example \ref{stitched}. However, there are other Markovian APs driven by standard Brownian motion. For instance, in the two-period case, we may choose
\begin{align}
f_0(t)&= \frac{T_1-t}{T_1-T_0} \1_{\left[T_0,\frac{T_1+T_2}{2}\right]}(t) - \frac{T_2-t}{T_1-T_0} \1_{\left(\frac{T_1+T_2}{2},T_2\right]}(t), \label{f0} \\
f_1(t)&= \frac{t-T_0}{T_1-T_0} \1_{[T_0,T_1]}(t) +  \frac{T_2-t}{T_2-T_1} \1_{(T_1,T_2]}(t), \\
f_2(t)&= \frac{t-T_1}{T_2-T_1} \1_{\left[T_1,T_2\right]}(t).
\end{align}
It is straightforward to verify that these are interpolating coefficients. Let $(A_t^{(2)})_{t \in [T_0,T_2]}$ be the AP constructed with these interpolating coefficients and driven by a standard Brownian motion. Then,
\begin{align}
&K_A(s,t)\nn \\
&= \frac{(\min(s,t) -T_0)(T_1-\max(s,t))}{T_1-T_0}\1_{(T_0,T_1)}(s,t) \nn \\ \nn \\
&+ (\min(s,t)-T_1)\left(\frac{T_0 (\max(s,t) + T_0) - 3 T_0 T_1 + T_1^2}{(T_1-T_0)^2} + \frac{T_1-\max(s,t)}{T_2-T_1} \right)\nn \\
&\hspace{8.5cm} \times \1_{\left(T_1,\frac{T_1+T_2}{2}\right]}(s,t)\nn \\ \nn \\
&+ \frac{(\min(s,t) -T_1)(T_2-\max(s,t))}{T_2-T_1} \frac{T_0^2 + T_1^2 + T_0 (T_2 -3 T_1)}{(T_1-T_0)^2}\nn \\ &\hspace{6.5cm}\times\1_{\left(T_1,\frac{T_1+T_2}{2}\right]}(\min(s,t)) \1_{\left(\frac{T_1+T_2}{2},T_2\right]}(\max(s,t))\nn \\ \nn \\
&+ \frac{\min(s,t) (T_0^2 + T_1^2 - T_0 (T_1 + T_2))  - (T_1 - T_0)^2 T_1 + T_0 T_2 (T_2 - T_1)}{(T_1-T_0)^2} \frac{T_2-\max(s,t)}{T_2-T_1} \nn \\ 
&\hspace{8.5cm}\times\1_{\left(\frac{T_1+T_2}{2},T_2\right]}(s,t).
\end{align}
Thus,
\begin{align}
    A_1(x)&= (x -T_0) \1_{\left[T_0,T_1\right]}(x) + (x -T_1) \1_{\left(T_1,\frac{T_1+T_2}{2}\right]}(x)  \\
    &+ \frac{x (T_0^2 + T_1^2 - T_0 (T_1 + T_2))  - (T_1 - T_0)^2 T_1 + T_0 T_2 (T_2 - T_1)}{T_0^2 + T_1^2 + T_0 (T_2 -3 T_1)}\1_{\left(\frac{T_1+T_2}{2},T_2\right]}(x),\nn
\end{align}
\begin{align}
    A_2(x)= \frac{T_1-x}{T_1-T_0}\1_{[T_0,T_1]}(x) &+ \left(\frac{T_0 (x + T_0) - 3 T_0 T_1 + T_1^2}{(T_1-T_0)^2} + \frac{T_1-x}{T_2-T_1} \right)\1_{\left(T_1,\frac{T_1+T_2}{2}\right]}(x) \nn \\
    &+\frac{(T_2-x)}{T_2-T_1} \frac{T_0^2 + T_1^2 + T_0 (T_2 -3 T_1)}{(T_1-T_0)^2}\1_{\left(\frac{T_1+T_2}{2},T_2\right]}(x).
\end{align}
Recalling Theorem \ref{thmmarkov}, this AP is Markovian. This process is a slight modification of the stitched Brownian AP, where $f_0$ is not $0$ on $(T_1,T_2)$. Hence, $B_{T_0}$ still has a negative influence on the paths of the AP on $(T_1,T_2)$, since $f_0$ is negative on $(T_1,T_2)$. Similarly, we can modify the stitched Brownian arcade by setting $f_2\neq 0$ on $(T_0,T_1)$: 
\begin{align}
f_0(t)&= \frac{T_1-t}{T_1-T_0} \1_{[T_0,T_1]}(t),\\
f_1(t)&= \frac{t-T_0}{T_1-T_0} \1_{[T_0,T_1]}(t) +  \frac{T_2-t}{T_2-T_1} \1_{(T_1,T_2]}(t),\\
f_2(t)&=\frac{T_0-t}{T_2-T_1}\1_{\left[T_0,\frac{T_0+T_1}{2}\right]}(t) + \frac{t-T_1}{T_2-T_1} \1_{\left(\frac{T_0+T_1}{2},T_2\right]}(t). \label{f2}
\end{align}
For this choice of interpolating coefficients, $B_{T_2}$ has an influence on the paths of the AP on $(T_0,T_1)$. Combining the interpolating coefficients $f_0$ from Eq. (\ref{f0}) and $f_2$ from Eq. (\ref{f2}), we can find an interpolating coefficient $f_1$, such that a Brownian AP with these interpolating coefficients is Markovian:
\begin{align}
f_0(t)&= \frac{T_1-t}{T_1-T_0} \1_{\left[T_0,\frac{T_1+T_2}{2}\right]}(t) + \frac{t-T_2}{T_1-T_0} \1_{\left(\frac{T_1+T_2}{2},T_2\right]}(t),\\   \nn \\
f_1(t) &= \frac{(t-T_0)(T_2-T_0)}{(T_2-T_1)(T_1-T_0)}\1_{\left[T_0,\frac{T_0+T_1}{2}\right]}(t)  \nn \\
&\quad + \frac{T_1^2-T_0T_2 + t(T_0-2T_1+T_2)}{(T_1-T_0)(T_2-T_1)} \1_{\left(\frac{T_0+T_1}{2},T_1\right]}(t)  \nn  \\  \nn \\
&\quad + \frac{T_1 (T_0 (T_1 - 2 T_2) + T_1 T_2) + t(T_0T_2-T_1^2)}{(T_2-T_1)(T_1-T_0)T_1} \1_{\left(T_1,\frac{T_1+T_2}{2}\right]}(t)\nn \\  \nn \\
&\quad +\frac{(T_2-t)(T_1^2+T_0(T_2-2T_1))}{(T_2-T_1)(T_1-T_0)T_1}\1_{\left(\frac{T_1+T_2}{2},T_2\right]}(t), \\  \nn \\
f_2(t)&=\frac{T_0-t}{T_2-T_1} \1_{\left[T_0,\frac{T_0+T_1}{2}\right]}(t)+ \frac{t-T_1}{T_2-T_1} \1_{\left(\frac{T_0+T_1}{2},T_2\right]}(t).
\end{align}
The key to building non-standard APs is to break each interval into several sub-intervals, and to define the interpolating coefficients piecewise on these sub-intervals while guaranteeing that they remain continuous, and that the expression of $K_A$ in Eq. (\ref{KA}) has separable variables.
\end{ex}
\section{Randomized arcade processes}\label{RAPsection}
We extend the construction of arcade processes to allow interpolation between the components of a random vector $X$ instead of interpolation between zeros. Two sets $\{f_0,\ldots,f_n\}$ and $\{g_0,\ldots,g_n\}$ of interpolating coefficients, see Definition \ref{interpolating}, are needed to ensure the matching of the target random variables.
We recall that the $\R^{n+1}$-valued random vector $X=(X_0,\ldots, X_n)$ is independent of the stochastic driver $(D_t)$, while the random variables $X_0,\ldots, X_n$ may be mutually dependent. 
\begin{defn} \label{RAP}
An $X$-randomised arcade process ($X$-RAP) on the partition $\{T_0,T_n\}_*$, denoted $(I_t^{(n)})_{t \in [T_0,T_n]}$, is a stochastic process of the form 
\begin{equation}
    I_t^{(n)}:= D_{t}-\sum_{i=0}^n \left(f_i(t)D_{T_i}-g_i(t)X_{i}\right),
\end{equation}
where $f_0,\ldots, f_n$ and $g_0,\ldots, g_n$ are interpolating coefficients on $\{T_0,T_n\}_*$. Writing $I_t^{(n)}=S_t^{(n)}+  A_t^{(n)}$, we refer to 
\begin{equation}
    S_t^{(n)}=\sum\limits_{i=0}^n g_i(t)X_{i}
\end{equation}
as the signal function of $I_t^{(n)}$ and to 
\begin{equation}
    A_t^{(n)}=D_{t}-\sum\limits_{i=0}^n f_i(t)D_{T_i}
\end{equation}
as the noise process of $I_t^{(n)}$. We denote by $(\F^I_t)_{t \in [T_0,T_n]}$ the filtration generated by $(I_t^{(n)})$.
\end{defn}
We emphasise that $I_{T_0}^{(n)}= X_{0}, \ldots, I_{T_n}^{(n)}= X_{n}$, so $(I_t^{(n)})$ is a strong stochastic interpolator between the random variables $X_{0},\ldots, X_{n}$. The signal function $(S_t^{(n)})$ is independent of the noise process $(A_t^{(n)})$ since $X$ is assumed to be independent of $(D_t)_{t\in [T_0,T_n]}$. 
\begin{rem}
A related but distinct class of processes, introduced in \cite{Menguturk} and called random $n$-bridges, shares some characteristics with RAPs. These processes, defined weakly, match given probability measures instead of random variables. They are constructed in a similar fashion as stochastic bridges, that is, by conditioning a stochastic process to match given distributions at given times. In special cases, the law of a RAP satisfies the conditions for the RAP to be a random $n$-bridge. For instance, the RAP obtained by randomising the stitched Brownian bridge, using the same interpolating coefficients for the signal function as the ones used in the noise process, has a law that satisfies the conditions for the RAP to be a random $n$-bridge. However, any other RAP driven by Brownian motion is not a random $n$-bridge. Conversely, certain random $n$-bridges cannot have the same law as a RAP. In this work, we chose not to consider the case in which the driver underlying a RAP jumps. But if we did, just for the sake of comparison, a random $n$-bridge built using a gamma process would not match the law of any RAP driven by a gamma process, since APs are sums, not products. The development of alternative arcade processes driven by (pure) jump processes will be pursued in future research. It requires a different construction that does not rely on the covariance function, similar to the change needed to go from Gaussian bridges to Lévy bridges \cite{Hoyle} or Markov bridges \cite{Macrina2} .
\end{rem}
The paths of an $X$-RAP depend on the joint distribution of the random vector $X$, not only on its marginal distributions. This fact is illustrated in the following example.
\begin{ex} \label{ex33}
Let $X=(X_0,\ldots,X_5)$ be a vector of independent and uniform $\mathcal U\,(\{-1,1\})$ random variables and $Y=(Y_0,\ldots,Y_5)$ be another vector of random variables such that $Y_0 \sim \mathcal U\,(\{-1,1\})$, $Y_i=-Y_{i-1}$ for $i=1,\ldots, 5$. Let $(A_t^{(5)})$ be an AP with elliptic interpolation coefficients driven by Brownian motion, $g_i=f_i$ for $i=0,\ldots, 5$, and $(I_t^{(5)})$, $(\tilde I_t^{(5)})$ its associated $X$-RAP and $Y$-RAP, respectively. Although the same driver and interpolating coefficients for both RAPs are the same, and the vectors $X$ and $Y$ have the same marginal distributions, the paths of $(I_t^{(5)})$ and $(\tilde I_t^{(5)})$ are different.
\begin{figure}[H]
  \centering
 \begin{minipage}[t]{0.49\textwidth}
    \includegraphics[width=\textwidth]{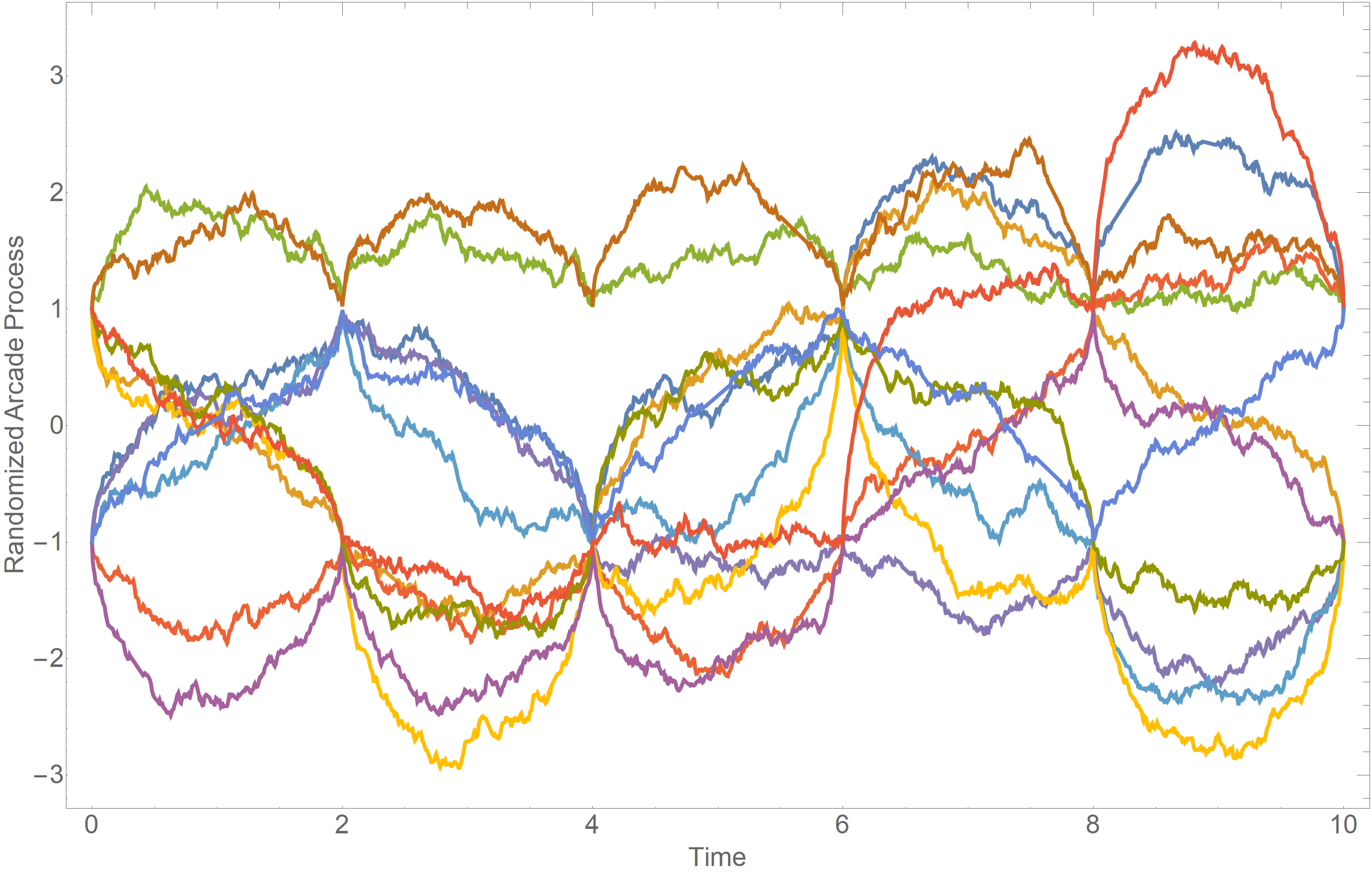}
    \caption{Paths simulation of $(I_t^{(5)})$ on $[0,10]$ using the equidistant partition $\{T_i = 2i \mid i=0,1,\ldots, 5\}$, cf., Example \ref{ex33}.}
  \end{minipage}
  \hfill
  \begin{minipage}[t]{0.49\textwidth}
    \includegraphics[width=\textwidth]{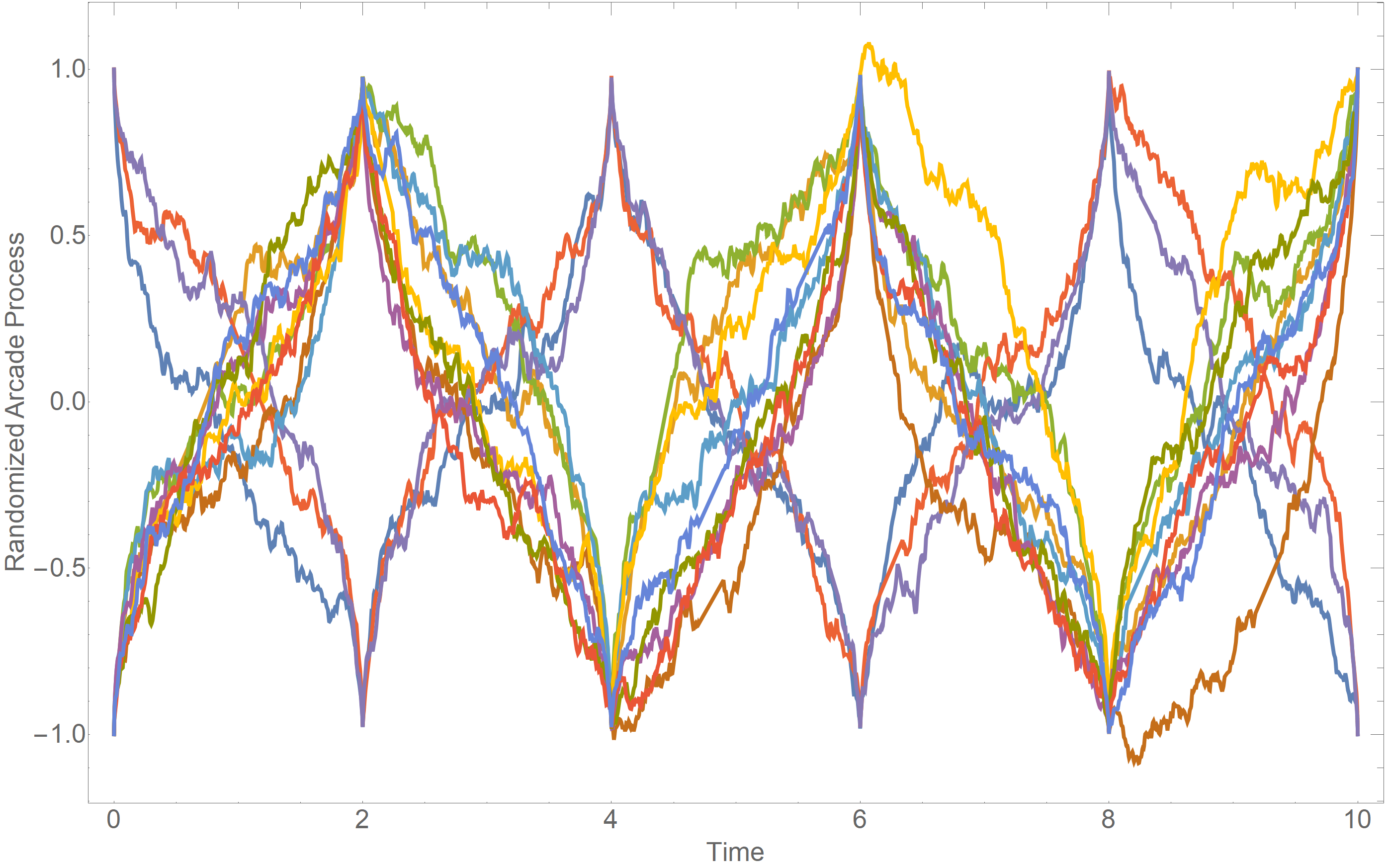}
    \caption{Paths simulation of $(\tilde I_t^{(5)})$ on $[0,10]$ using the equidistant partition $\{T_i = 2i \mid i=0,1,\ldots, 5\}$, cf., Example \ref{ex33}.}
  \end{minipage}
\end{figure}
\end{ex}
As a side remark, we note here that besides its main purpose of interpolating in the strong sense, a RAP can also be used to mimic a stochastic process. Let $(Y_t)_{t \in [T_0,T_n]}$ be a sample-continuous stochastic process. For instance, if $\{T_0,T_n\}_*$ is the equidistant partition of an interval $[a,b]$, $X=(Y_{T_0}, \ldots, Y_{T_n})$, and $\{f_0,\ldots,f_n\}=\{g_0,\ldots,g_n\}$ are the piecewise linear interpolating coefficients, then for almost all $\omega \in \Omega$, $\sup\limits_{t \in [a,b]} A_t^{(n)}(\omega) \rightarrow 0$,  and $\sup\limits_{t \in [a,b]} \abs{S_t^{(n)}(\omega) - Y_t(\omega)} \rightarrow 0$ as $n \rightarrow \infty$. Hence, in this case,
\begin{equation}
    \sup\limits_{t \in [a,b]} \abs{I_t^{(n)}-Y_t}=\sup\limits_{t \in [a,b]} \abs{S_t^{(n)}+  A_t^{(n)}-Y_t}  \rightarrow 0,
\end{equation}
with probability one.

\begin{prop}
Let $(S_t^{(n)})$ and $(A_t^{(n)})$ have mean functions $\mu_S$ and $\mu_A$, variance functions $\sigma_S^2$ and $\sigma_A^2$, and covariance functions $K_S$ and $K_A$, respectively. Then
\begin{align}
    &\mu_I(t):=\E [I_{t}^{(n)}]=  \mu_S(t) + \mu_A(t), \\
    &\sigma_I^2(t):=\var{[I_{t}^{(n)}]} = \sigma^2_S(t) +  \sigma^2_A(t), \\
    &K_I(s,t):=\cov[{I_{s}^{(n)}, I_{t}^{(n)}}] \nn \\
    &\hspace{1.5cm} = K_S(s,t) + K_A(s,t) =\sum\limits_{i=0}^n \sum\limits_{j=0}^n g_i(t)g_j(s)\cov[X_{i},X_j]  + K_A(s,t).
\end{align}
\end{prop}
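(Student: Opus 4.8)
The plan is to leverage the additive decomposition $I_t^{(n)} = S_t^{(n)} + A_t^{(n)}$ recorded in Definition \ref{RAP}, together with the independence $(S_t^{(n)}) \ind (A_t^{(n)})$, which holds because the signal function is a deterministic transformation of $X$, the noise process is a deterministic transformation of $(D_t)$, and $X \ind (D_t)$ by standing assumption. Under the hypothesis that the stated moments of $(S_t^{(n)})$ and $(A_t^{(n)})$ exist, every expression below is well defined.

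First I would obtain the mean by linearity of the expectation: $\E[I_t^{(n)}] = \E[S_t^{(n)}] + \E[A_t^{(n)}] = \mu_S(t) + \mu_A(t)$. Next, for the covariance I would expand $\cov[I_s^{(n)}, I_t^{(n)}] = \cov[S_s^{(n)} + A_s^{(n)},\, S_t^{(n)} + A_t^{(n)}]$ by bilinearity into four terms, $\cov[S_s^{(n)}, S_t^{(n)}] + \cov[S_s^{(n)}, A_t^{(n)}] + \cov[A_s^{(n)}, S_t^{(n)}] + \cov[A_s^{(n)}, A_t^{(n)}]$. The two mixed terms vanish because $(S_t^{(n)})$ and $(A_t^{(n)})$ are independent, hence uncorrelated, leaving $K_I(s,t) = K_S(s,t) + K_A(s,t)$. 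The variance formula is then the special case $s=t$, giving $\sigma_I^2(t) = \sigma_S^2(t) + \sigma_A^2(t)$.

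Finally, to produce the explicit form of $K_S$ I would substitute $S_t^{(n)} = \sum_{i=0}^n g_i(t) X_i$ and use bilinearity of the covariance once more, pulling the deterministic coefficients $g_i$ outside, to get $\cov[S_s^{(n)}, S_t^{(n)}] = \sum_{i=0}^n \sum_{j=0}^n g_i(s) g_j(t) \cov[X_i, X_j]$, exactly as in the analogous computation for APs in the earlier proposition. There is no real obstacle here; the only point worth flagging is that independence of $S$ and $A$ (not merely of $X$ from $(D_t)$) is what kills the cross terms, and that one could if desired further expand $\mu_S$, $\sigma_S^2$ and $K_A$ in terms of the moments of $X$, the coefficients $g_i$, and the driver data, though that is not needed for the statement as given.
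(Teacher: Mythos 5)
Your proof is correct and is exactly the routine argument the paper intends (indeed, the paper states this proposition without any proof at all): linearity of expectation, bilinearity of covariance with the cross terms killed by $(S_t^{(n)}) \ind (A_t^{(n)})$, and a final expansion of $K_S$ with the deterministic coefficients pulled out. The only cosmetic difference is the order of the indices $g_i(s)g_j(t)$ versus the paper's $g_i(t)g_j(s)$, which agree by symmetry of the double sum.
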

The proof follows from the proof of Prop. \ref{covform}. We introduce terminologies similar to those in the previous section.
\begin{defn} \label{standardRAP}
Let $(I_t^{(n)})$ be a RAP. 
\begin{enumerate}
    \item $(I_t^{(n)})$ is said to be a Gaussian RAP if its stochastic driver $(D_t)$ is a Gaussian process.
    \item $(I_t^{(n)})$ is said to be a standard RAP if its noise process $(A_t^{(n)})$ is a standard AP, and if for all $x\in[T_0,T_n]$ and $j=1,\ldots,n$ it holds that $g_j(x) \1_{[T_0,T_{j-1}]}(x)=0$, and $g_j(x) \1_{[T_{j-1},T_{j}]}(x)= f_j(x)\1_{[T_{j-1},T_{j}]}(x)$.
\end{enumerate}
\end{defn}
\section{Conditionally Markov RAPs} \label{conditonallymarkov}
Aside of simple cases, an $X$-RAP is not Markovian since no constraints are applied to the joint distribution of $X$. Hence, we introduce a relaxed notion of the Markov property that is better suited to RAPs.
\begin{defn} \label{nearlymarkov}
Let $\mathcal I  \subseteq \R^+$ be a real interval and $\tau_0<\tau_1 < \ldots  < \infty$ such that $\tau=\{\tau_0, \tau_1, \ldots\} \subset \mathcal I$. The set $\tau$ may be finite or contain countably many elements. A stochastic process $(Y_t)_{t \in \mathcal I}$ is called $\tau$-conditionally Markov if 
\begin{equation}
    \P\left[Y_t \in \cdot \mid  \F^Y_s \right] = \P\left[Y_{t} \in \cdot \mid Y_{\tau_0},\ldots,Y_{\tau_{m(s)}}  ,  Y_s \right]
\end{equation}
for any $(s,t) \in \mathcal I^2$ such that $s \leqslant t$, and $\tau_{m(s)} = \max\limits_{i\in \N} \{\tau_i \mid \tau_i \leqslant s\}$.
\end{defn}
Now, we give a result on the conditionally-Markov property of Gaussian RAPs similar to the one on the Markov property of Gaussian APs.
\begin{thm} \label{thmsemimarkov}
Let $(I_t^{(n)})_{t \in [T_0,T_n]}=(S_t^{(n)} + A_t^{(n)})_{t \in [T_0,T_n]}$ be a Gaussian $X$-RAP on $\{T_0,T_n\}_*$. Then $(I_t^{(n)})$ is $\{T_0,T_n\}_*$-conditionally Markov if the following conditions are all satisfied:
\begin{enumerate}
    \item The AP $(A_t^{(n)})$ is Markov, i.e., $K_A(s,t)\\ =\sum\limits_{i=0}^{n-1} A_1(\min(s,t))A_2(\max(s,t)) \1_{(T_i,T_{i+1})}(s,t)$.
    \item For all $ j=1,\ldots, n$, and for all $ x \in [T_0,T_n]$, 
    \begin{equation} \label{subcond1}
        g_j(x) \1_{[T_0,T_{j-1}]}(x)=0,
    \end{equation}
    \begin{equation} \label{subcond2}
        g_j(x)A_1(T_j) \1_{[T_{j-1},T_{j}]}(x)= A_1(x)\1_{[T_{j-1},T_{j}]}(x).
    \end{equation}
\end{enumerate}
\end{thm}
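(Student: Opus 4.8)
The plan is to reduce the nearly‑Markov identity to a statement about Gaussian $L^2$‑projections and then to exploit a linearisation of $(I_t^{(n)})$ on each sub‑interval. Because the RAP is Gaussian and $I_{T_i}^{(n)}=X_i$, it is enough to prove that for every pair $s\leqslant t$, writing $m=m(s)$ so that $s\in[T_m,T_{m+1})$, the projection $\E[I_t^{(n)}\|\F^I_s]$ lies in the closed linear span of $\{X_0,\ldots,X_m,I_s^{(n)}\}$. Granting this, $Z:=I_t^{(n)}-\E[I_t^{(n)}\|\F^I_s]$ is centred and uncorrelated with every $I_u^{(n)}$, $u\leqslant s$, hence (being jointly Gaussian with that family) independent of $\F^I_s$; decomposing $I_t^{(n)}=\E[I_t^{(n)}\|\F^I_s]+Z$ with the first summand measurable with respect to $\sigma(X_0,\ldots,X_m,I_s^{(n)})$ and $Z\ind\F^I_s$ gives $\P[I_t^{(n)}\in\cdot\|\F^I_s]=\P[I_t^{(n)}\in\cdot\|X_0,\ldots,X_m,I_s^{(n)}]$, which is the $\{T_0,T_n\}_*$‑nearly‑Markov property. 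I would work throughout with centred versions of the driver and of $X$ (their means are deterministic and affect neither $(\F^I_t)$ nor conditional laws), and, after deleting redundant coordinates and the exceptional degenerate time points, assume $(X_0,\ldots,X_m)$ non‑degenerate and $\var(A_u^{(n)})=K_A(u,u)>0$ on each open sub‑interval.

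The engine of the argument is a change of variables on one sub‑interval. Fix $m$ and restrict to $[T_m,T_{m+1}]$. Condition 2, Eq.~(\ref{subcond1}), kills $g_i$ on $[T_0,T_{i-1}]\supseteq[T_m,T_{m+1})$ for $i\geqslant m+2$, so on this interval $S_u^{(n)}$ involves only $g_0,\ldots,g_{m+1}$; condition 1 supplies the function $A_1$, and Eq.~(\ref{subcond2}) together with $g_{m+1}(T_{m+1})=1$ forces $g_{m+1}(u)=A_1(u)/A_1(T_{m+1})$ on $[T_m,T_{m+1}]$. Setting $\tilde X:=X_{m+1}-\E[X_{m+1}\|X_0,\ldots,X_m]$, I obtain for $u\in[T_m,T_{m+1}]$ the decomposition
\begin{equation*}
I_u^{(n)}=\sum_{i=0}^{m}g_i(u)X_i+J_u,\qquad J_u:=\frac{A_1(u)}{A_1(T_{m+1})}\,\tilde X+A_u^{(n)},
\end{equation*}
and I would check three properties of $(J_u)_{u\in[T_m,T_{m+1}]}$: (a) it is a centred Gaussian process with $\cov(J_u,J_v)=A_1(\min(u,v))\,\tilde A_2(\max(u,v))$, where $\tilde A_2:=A_2+\var(\tilde X)\,A_1/A_1(T_{m+1})^2$, hence Gauss--Markov on $[T_m,T_{m+1}]$ by Corollary~\ref{coromarkov}; (b) $(J_u)_{u\in[T_m,T_{m+1}]}\ind\F^I_{T_m}$, since it is a functional of $\tilde X$ and $(A_u^{(n)})_{u\in[T_m,T_{m+1}]}$, where $\tilde X$ is orthogonal to $(X_0,\ldots,X_m)$ and independent of $(D_t)$, while $(A_u^{(n)})_{u\in[T_m,T_{m+1}]}$ is independent of $\F^A_{T_m}$ by Proposition~\ref{ind} and of $X$, and where $\F^I_{T_m}\subseteq\sigma(X_0,\ldots,X_m,\,A_u^{(n)}\colon u\leqslant T_m)$ by Eq.~(\ref{subcond1}) once more; and (c) $J_{T_{m+1}}=\tilde X$.

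With (a)--(c) in place I would compute $\E[I_t^{(n)}\|\F^I_s]$ by cases. If $s=T_m$: $\F^I_s=\F^I_{T_m}$, so (b) gives $\E[J_t\|\F^I_{T_m}]=0$ for $t\in(T_m,T_{m+1})$, hence $\E[I_t^{(n)}\|\F^I_{T_m}]=\sum_{i\leqslant m}g_i(t)X_i$; for $t\geqslant T_{m+1}$, $A_t^{(n)}$ sits in a later sub‑interval, so it is independent of $\F^I_{T_m}$ (Proposition~\ref{ind}) and of $X$, while $\E[X_i\|\F^I_{T_m}]=\E[X_i\|X_0,\ldots,X_m]$; either way $\E[I_t^{(n)}\|\F^I_{T_m}]\in\mathrm{span}\{X_0,\ldots,X_m\}$. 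If $s\in(T_m,T_{m+1})$ and $t\in[s,T_{m+1})$: since $I_u^{(n)}-J_u$ is $\F^I_{T_m}$‑measurable, $\F^I_s=\F^I_{T_m}\vee\sigma(J_u\colon T_m<u\leqslant s)$; using the independence from $\F^I_{T_m}$ and then the Markov property of $J$, $\E[J_t\|\F^I_s]=\E[J_t\|J_s]=\tfrac{\tilde A_2(t)}{\tilde A_2(s)}J_s$, and $J_s=I_s^{(n)}-\sum_{i\leqslant m}g_i(s)X_i$, so the projection is in $\mathrm{span}\{X_0,\ldots,X_m,I_s^{(n)}\}$. If $s\in(T_m,T_{m+1})$ and $t\geqslant T_{m+1}$: $\E[A_t^{(n)}\|\F^I_s]=0$, and for each $i\geqslant m+1$, writing $X_i=\E[X_i\|X_0,\ldots,X_m]+\tilde X_i$ and noting that $\tilde X_i-\tfrac{\cov(\tilde X_i,\tilde X)}{\var(\tilde X)}\tilde X$ is orthogonal to $(X_0,\ldots,X_m)$, to $(D_t)$ and to $\tilde X$, hence independent of $\F^I_s$, I get $\E[\tilde X_i\|\F^I_s]=\tfrac{\cov(\tilde X_i,\tilde X)}{\var(\tilde X)}\E[\tilde X\|\F^I_s]$; since $\tilde X=J_{T_{m+1}}$, property (a) and the Markov property of $J$ give $\E[\tilde X\|\F^I_s]=\tfrac{\tilde A_2(T_{m+1})}{\tilde A_2(s)}J_s\in\mathrm{span}\{X_0,\ldots,X_m,I_s^{(n)}\}$, and summing $\sum_ig_i(t)\E[X_i\|\F^I_s]$ finishes the case. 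In all cases the projection has the required form, and the reduction of the first paragraph closes the proof.

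The hard part is really just the first half of the second paragraph: spotting the decomposition $I_u^{(n)}=(\sigma(X_0,\ldots,X_m)\text{-measurable})+J_u$ and recognising that Eq.~(\ref{subcond2}) is exactly the hypothesis making $g_{m+1}$ proportional to the ``increasing profile'' $A_1$ of the noise, so that the residual $J$ is again Gauss--Markov with a variance‑inflated $\tilde A_2$; from there everything is routine manipulation of Gaussian conditional expectations. The points demanding care are the joint‑independence claim in (b) (Proposition~\ref{ind} for the noise and $X\ind(D_t)$ must be used together, not separately), the reading of $A_1(T_{m+1})$ as the one‑sided limit $\lim_{x\to T_{m+1}^-}A_1(x)$ -- which has to be nonzero for Eq.~(\ref{subcond2}) to be consistent -- and the degenerate sub‑cases $\var(\tilde X)=0$, $\var(A_u^{(n)})=0$ or $(X_0,\ldots,X_m)$ linearly dependent, each of which makes the relevant conditional expectation collapse directly onto $\mathrm{span}\{X_0,\ldots,X_m\}$.
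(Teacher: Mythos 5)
Your proposal founders on a single but fatal misreading of the hypotheses: ``Gaussian $X$-RAP'' means (Definition \ref{standardRAP}) that the \emph{driver} $(D_t)$, hence the noise process $(A_t^{(n)})$, is Gaussian; the target vector $X$ is an arbitrary random vector independent of $(D_t)$ (in the paper's own examples the $X_i$ are uniform on $\{-1,1\}$). Your argument treats $(I_t^{(n)})$ and the family $\{X_0,\ldots,X_n\}$ as jointly Gaussian throughout, and essentially every step depends on this: the opening reduction (``$Z$ is uncorrelated with every $I_u^{(n)}$, hence, being jointly Gaussian with that family, independent of $\F_s^I$''); claim (a), where $J_u=\frac{A_1(u)}{A_1(T_{m+1})}\tilde X+A_u^{(n)}$ is asserted to be a Gaussian process and Corollary \ref{coromarkov} is invoked for it, even though $\tilde X=X_{m+1}-\E[X_{m+1}\,|\,X_0,\ldots,X_m]$ need not be Gaussian; the regression formula $\E[J_t\,|\,J_s]=\tfrac{\tilde A_2(t)}{\tilde A_2(s)}J_s$, which is an $L^2$-projection identity, not a conditional expectation, outside the Gaussian case; and the step where $\tilde X_i-\tfrac{\cov(\tilde X_i,\tilde X)}{\var(\tilde X)}\tilde X$ is declared independent of $\F_s^I$ merely because it is \emph{orthogonal} to $(X_0,\ldots,X_m)$, $(D_t)$ and $\tilde X$. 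None of these implications holds for general $X$, and the nearly-Markov property is a statement about conditional \emph{laws}, so controlling conditional expectations alone cannot close the gap.

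The fix is the one the paper uses: work \emph{conditionally on} $(X_0,\ldots,X_m)$, and use condition (\ref{subcond2}) not to identify $g_{m+1}$ with a profile of a new ``Gaussian'' process, but to guarantee that the auxiliary linear combinations $\Delta_q=\sum_i c_{i,q}I_{s_i}^{(n)}$ (with $\sum_i c_{i,q}K_A(s_i,t)=0$) have vanishing coefficient in front of the non-Gaussian variable $X_{m+1}$; after that cancellation the $\Delta_q$ are, conditionally on $(X_0,\ldots,X_m)$, constants plus genuinely Gaussian quantities built from $(A_{s_i}^{(n)})$, and the zero-covariance conditions do yield conditional independence. Your observation that (\ref{subcond2}) forces $g_{m+1}\propto A_1$ on $[T_m,T_{m+1}]$ is exactly the right reading of that hypothesis, and your reduction to a single sub-interval via (\ref{subcond1}) and Proposition \ref{ind} matches the paper's first step; but as written the proof is only valid under the additional, unstated assumption that $X$ is Gaussian and jointly Gaussian with $(D_t)$, which is far narrower than the theorem.
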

\begin{proof}
Consider Theorem 1.12 in \cite{Lipster}, and let $k>1$ and $(s_1 , s_2 , \ldots , s_k , t ) \in [T_0,T_n] ^{k+1}$ such that $s_1 < s_2 < \ldots < s_k < t$. Then, $(I_t^{(n)})$ is $\{T_0,T_n\}_*$-conditionally Markov if and only if
\begin{equation}
\mathbb{P}\left[I_{t}^{(n)} \in \cdot \mid X_0, \ldots, X_{m(s_k)},  I_{s_1}^{(n)}, \ldots, I_{s_{k}}^{(n)}\right] = \mathbb{P}\left[I_{t}^{(n)} \in \cdot \mid X_0, \ldots, X_{m(s_k)}, I_{s_k}^{(n)} \right].
\end{equation}
where $m(s_k):= \max\limits\{i \in \N \mid T_i \leqslant s_k\}$. In the following, we will refer to $m(s_k)$ by $m$ since $s_k$ is fixed. 

We first show that $s_1,\ldots, s_k$ can be selected to all be in the sub-interval $(T_m,T_{m+1})$. To see this, assume there is an integer $j \in \{1,\ldots, k\}$ such that $s_j <T_m$ and $T_m<s_{j+1}$. Then 
\begin{equation}
\sigma( X_0, \ldots, X_{m},  I_{s_1}^{(n)}, \ldots, I_{s_{k}}^{(n)}) = \sigma( X_0, \ldots, X_{m},  A_{s_1}^{(n)}, \ldots, A_{s_j}^{(n)}, I_{s_{j+1}}^{(n)}, \ldots, I_{s_{k}}^{(n)})
\end{equation}
by Eq. (\ref{subcond1}). We also know that $(A_{s_1}^{(n)}, \ldots, A_{s_j}^{(n)}) \ind (X_0, \ldots, X_{m})$ by the definition of the $X$-RAP, and therefore, in particular, $(A_{s_1}^{(n)}, \ldots, A_{s_j}^{(n)}) \ind (S_{s_{j+1}}^{(n)} , \ldots, S_{s_{k}}^{(n)}, S_{t}^{(n)})$. Furthermore, since $(A_t^{(n)})$ is Markov and $A_{T_m}^{(n)}=0$, we have $(A_{s_1}^{(n)}, \ldots, A_{s_j}^{(n)}) \ind (A_{s_{j+1}}^{(n)} , \ldots, A_{s_{k}}^{(n)}, A_{t}^{(n)})$, and hence $(A_{s_1}^{(n)}, \ldots, A_{s_j}^{(n)}) \ind (I_{s_{j+1}}^{(n)}, \ldots, I_{s_{k}}^{(n)}, I_{t}^{(n)} ) =(S_{s_{j+1}}^{(n)} + A_{s_{j+1}}^{(n)}, \ldots, S_{s_{k}}^{(n)}+ A_{s_{k}}^{(n)}, S_{t}^{(n)}+A_{t}^{(n)} )$. We conclude that 
\begin{equation}
\mathbb{P}\left[I_{t}^{(n)} \in \cdot \mid X_0, \ldots, X_{m},  I_{s_1}^{(n)}, \ldots, I_{s_{k}}^{(n)}\right] = \mathbb{P}\left[I_{t}^{(n)} \in \cdot \mid X_0, \ldots, X_{m},  I_{s_{j+1}}^{(n)}, \ldots, I_{s_{k}}^{(n)}\right],
\end{equation}
which means, we can assume that $s_1,\ldots, s_k$ are all in the same sub-interval $(T_m,T_{m+1})$. 

Let us define $a_m(\cdot):= \sum\limits_{i=0}^m g_i(\cdot) X_i$, and
\begin{equation}
    \Delta_q:=\sum_{i=1}^{k}  c_{i,q}   I_{s_i}^{(n)} = \sum_{i=1}^{k}  c_{i,q}   a_m(s_i) + \sum_{i=1}^{k}  c_{i,q}  g_{m+1}(s_i) X_{m+1} + \sum_{i=1}^{k}  c_{i,q}  A_{s_i}^{(n)},
\end{equation}
for $q=1,\ldots,k-1$, where the coefficients $(c_{i,q})$ are chosen such that
\begin{equation} \label{choice}
\sum\limits_{i=1}^{k}  c_{i,q}  K_A(s_i,t)=0 \quad \mbox{ and } \quad \mathbb \det
\begin{pmatrix}
c_{1,1}  & \ldots & c_{1,k-1}  \\
\vdots &  & \vdots  \\
c_{k-1,1}  & \ldots & c_{k-1,k-1}   \\
\end{pmatrix} \neq 0.
\end{equation}
This guarantees the following, where the notation "$\mid (X_0, \ldots, X_{m})$" means conditional on $(X_0, \ldots, X_{m})$.
\begin{enumerate}
    \item $\mathbb{P}\left[I_{t}^{(n)} \in \cdot \mid  X_0, \ldots, X_{m}, I_{s_1}^{(n)}, \ldots, I_{s_{k}}^{(n)}\right] \\ = \mathbb{P}\left[I_{t}^{(n)} \in \cdot \mid X_0, \ldots, X_{m}, \Delta_1, \ldots, \Delta_{k-1},I_{s_k}^{(n)} \right]$.
    \item $(\Delta_1, \ldots, \Delta_{k-1}) \mid (X_0, \ldots, X_{m})$ is a Gaussian vector. To see this, we observe that for all $q =1,\ldots k-1$,
    \begin{equation}
        \sum\limits_{i=1}^{k}  c_{i,q}  K_A(s_i,t)=0  \implies \sum\limits_{i=1}^{k}  c_{i,q}  g_{m+1}(s_i)=0,
    \end{equation}
    where we used Eq. (\ref{subcond2}). Hence, $\Delta_q=\sum\limits_{i=1}^{k}  c_{i,q}   I_{s_i}^{(n)}=\sum\limits_{i=1}^{k}  c_{i,q}   a_m(s_i)+ \sum\limits_{i=1}^{k}  c_{i,q}   A_{s_i}^{(n)}$ for all $q=1,\ldots,k-1$, which implies that $(\Delta_1, \ldots, \Delta_{k-1}) \mid (X_0, \ldots, X_{m})$ is a Gaussian vector.
    \item $A_{t}^{(n)} \ind (\Delta_1, \ldots, \Delta_{k-1}) \mid (X_0, \ldots, X_{m})$, since $\sum\limits_{i=1}^{k}  c_{i,q}  K_A(s_i,t)=0$.
    \item $A_{s_k}^{(n)} \ind (\Delta_1, \ldots, \Delta_{k-1}) \mid (X_0, \ldots, X_{m})$, since $(A_t^{(n)})$ is Markov.
\end{enumerate}
To conclude, we need to show
\begin{equation} \label{ind2}
    I_{t}^{(n)} \ind (\Delta_1, \ldots, \Delta_{k-1}) \mid (X_0, \ldots, X_{m}) \text{ and }  I_{s_k}^{(n)} \ind (\Delta_1, \ldots, \Delta_{k-1}) \mid (X_0, \ldots, X_{m}).
\end{equation}
Since $(I_t^{(n)})=(S_t^{(n)}+A_t^{(n)})$, and $(S_t^{(n)}) \ind (A_t^{(n)})$, we have (\ref{ind2}) if
\begin{equation}
   A_{t}^{(n)} \ind (\Delta_1, \ldots, \Delta_{k-1})\mid (X_0, \ldots, X_{m}) \text{ and }  A_{s_k}^{(n)} \ind (\Delta_1, \ldots, \Delta_{k-1})\mid (X_0, \ldots, X_{m}),
\end{equation}
which is guaranteed by conditions (\ref{choice}).
\end{proof}
\begin{rem}
If $(A_t^{(n)})$ is standard, see Definition \ref{standard}, then Eq. (\ref{subcond2}) is equivalent to 
\begin{equation}
    g_j(x) \1_{[T_{j-1},T_{j}]}(x)= f_j(x)\1_{[T_{j-1},T_{j}]}(x).
\end{equation}
This makes standard RAPs automatically conditionally Markov.
\end{rem}
\begin{rem}
Depending on the coupling of $X$, the probability $\mathbb{P}[I_{t}^{(n)} \in \cdot \mid X_0, \ldots, X_{m(s)}, I_{s}^{(n)}]$ might simplify further. For instance, if $X$ has continuous marginals and is distributed according to Kantorovich's coupling, then 
\begin{equation}
    \mathbb{P}\left[I_{t}^{(n)} \in \cdot \mid X_0, \ldots, X_{m(s)}, I_{s}^{(n)} \right]= \mathbb{P}\left[I_{t}^{(n)} \in \cdot \mid X_0, I_{s}^{(n)} \right]
\end{equation}
because $X_1,\ldots,  X_{m(s)}$ are all deterministic functions of $X_0$.
\end{rem}
\begin{rem} \label{remreverse}
It is important to note that the conditionally-Markov property is not symmetric in time. Define $\mathcal G_t^I=\sigma ( I_u^{(n)} \mid t \leqslant u \leqslant T_n) $, and let $s < t$ in $[T_0,T_n]$. Then, to obtain
\begin{equation}
    \P\left[I_s^{(n)} \in \cdot \mid  \mathcal G^I_t \right] = \P\left[I_s^{(n)}  \in \cdot \mid I_t^{(n)}, X_{k(t)}, X_{k(t)+1}, \ldots, X_n  \right]
\end{equation}
where $k(t)=\min\limits\{i \in \N \mid T_i \geqslant t\}$, Condition (\ref{subcond1}) needs to be replaced by 
\begin{equation}
   g_j(x) \1_{[T_{j+1},T_{n}]}(x)=0,
\end{equation}
and Condition (\ref{subcond2}) by 
\begin{equation}
g_j(x)A_1(T_{j}) \1_{[T_{j},T_{j+1}]}(x)= A_1(x)\1_{[T_{j},T_{j+1}]}(x)
\end{equation}
for all $ j=0,\ldots, n-1$, and for all $ x \in [T_0,T_n]$.
\end{rem}
\begin{ex} \label{exf}
We give an example of a non-standard $X$-RAP on $[T_0,T_2]$ that is $\{T_0,T_2\}_*$-conditionally Markov, where $X_0\eql X_1  \eql X_2 \eql \textrm{Uni}\,(\{-1,1\})$ are mutually independent. Consider the interpolating coefficients
\begin{align}
f_0(t)&= \frac{T_1-t}{T_1-T_0} \1_{\left[T_0,\frac{T_1+T_2}{2}\right]}(t) - \frac{T_2-t}{T_1-T_0} \1_{\left(\frac{T_1+T_2}{2},T_2\right]}(t), \\
f_1(t)&= \frac{t-T_0}{T_1-T_0} \1_{[T_0,T_1]}(t) +  \frac{T_2-t}{T_2-T_1} \1_{(T_1,T_2]}(t), \\
f_2(t)&= \frac{t-T_1}{T_2-T_1} \1_{\left[T_1,T_2\right]}(t).
\end{align}
Let $(A_t^{(2)})_{t \in [T_0,T_2]}$ be the arcade process built with these interpolating coefficients and driven by a standard Brownian motion. As shown in Example \ref{nonstarcade}, $(A_t^{(2)})$ is Markov. For Eq. (\ref{subcond1}) to be satisfied, one only needs to impose $g_2(t)\1_{[T_0,T_1]}(t)=0$. For Eq. (\ref{subcond2}) to be satisfied, one requires that 
\begin{align}
&g_1(t)\1_{[T_0,T_1]}(t)= \frac{t-T_0}{T_1-T_0}\1_{[T_0,T_1]}(t),\\ &g_2(t)\1_{\left[T_1,\frac{T_1+T_2}{2}\right]}(t)=\left( \frac{t-T_1}{T_2-T_1} + \frac{(t-T_1)T_0}{(T_1-T_0)^2}\right) \1_{\left[T_1,\frac{T_1+T_2}{2}\right]}(t),\\
&g_2(t)\1_{\left(\frac{T_1+T_2}{2}, T_2 \right]}(t) = \left(\frac{t-T_1}{T_2-T_1} + \frac{(T_2  -t )T_0}{(T_1-T_0)^2}\right) \1_{\left(\frac{T_1+T_2}{2}, T_2 \right]}(t).
\end{align}
Outside of the considered intervals, the functions $g_i$ may take any values as long as they remain interpolating coefficients. Theorem \ref{thmsemimarkov} does not impose a condition on $g_0$. For example, we could choose $g_i=f_i$ outside the above intervals. Hence, all three conditions are fulfilled and this $X$-RAP is $\{T_0,T_2\}_*$-conditionally Markov. As we can see from the paths-simulation below, this process is different from a randomised stitched Brownian arcade on the second arc. The noise has been diminished to make the paths more informative. Simulating the signal function highlights the following: $X_0$ will determine the fate of the signal function on $[T_1,T_2]$, since this RAP remembers the previously matched random variables when changing arcs. On the first arc, where the process is simply a randomised Brownian bridge, to go from $X_0=-1$ to $X_1=-1$, for instance, there is only one way---a straight line. On the second arc, to go from $X_1=-1$ to $X_2=-1$, there are two ways. The evolution of the signal function is determined by the value that $X_0$ takes. This is illustrated by the paths of the signal function below: the blue path and the green path both take value $-1$ at $T_1$ and value $1$ at $T_2$, but have different values in $T_0$. Hence, they differ on $[T_1,T_2]$, as observed.
\begin{figure}[H]
   \centering
   \begin{minipage}[t]{0.49\textwidth}
     \vtop{\null\hbox{\includegraphics[width=\textwidth]{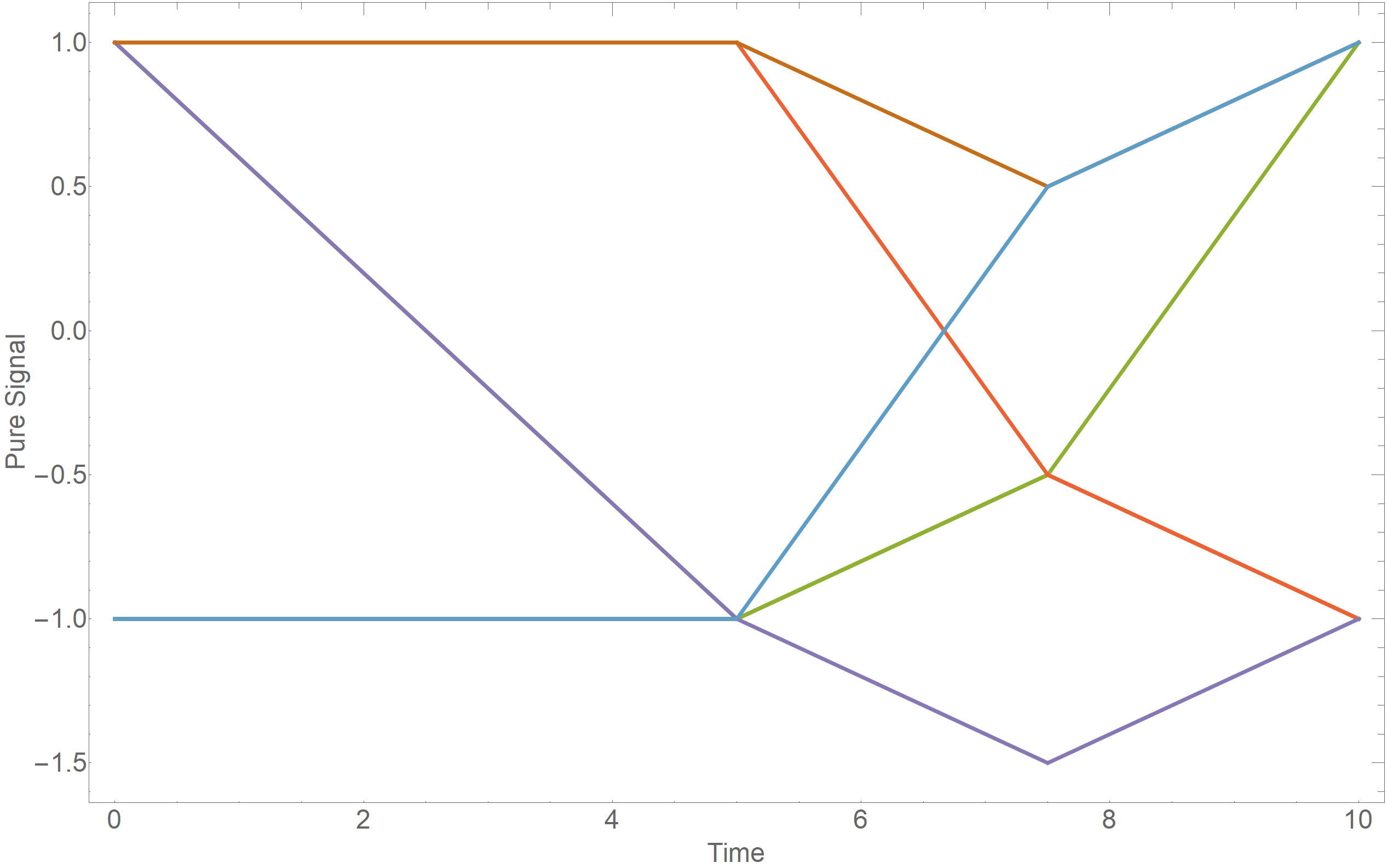}}}
     \caption{Paths simulation of the signal function of a non-standard $X$-RAP, where $\{T_0,T_2\}_*=\{0,5,10\}$, cf., Example \ref{exf}.}
   \end{minipage}
   \hfill
   \begin{minipage}[t]{0.49\textwidth}
     \vtop{\null\hbox{\includegraphics[width=\textwidth]{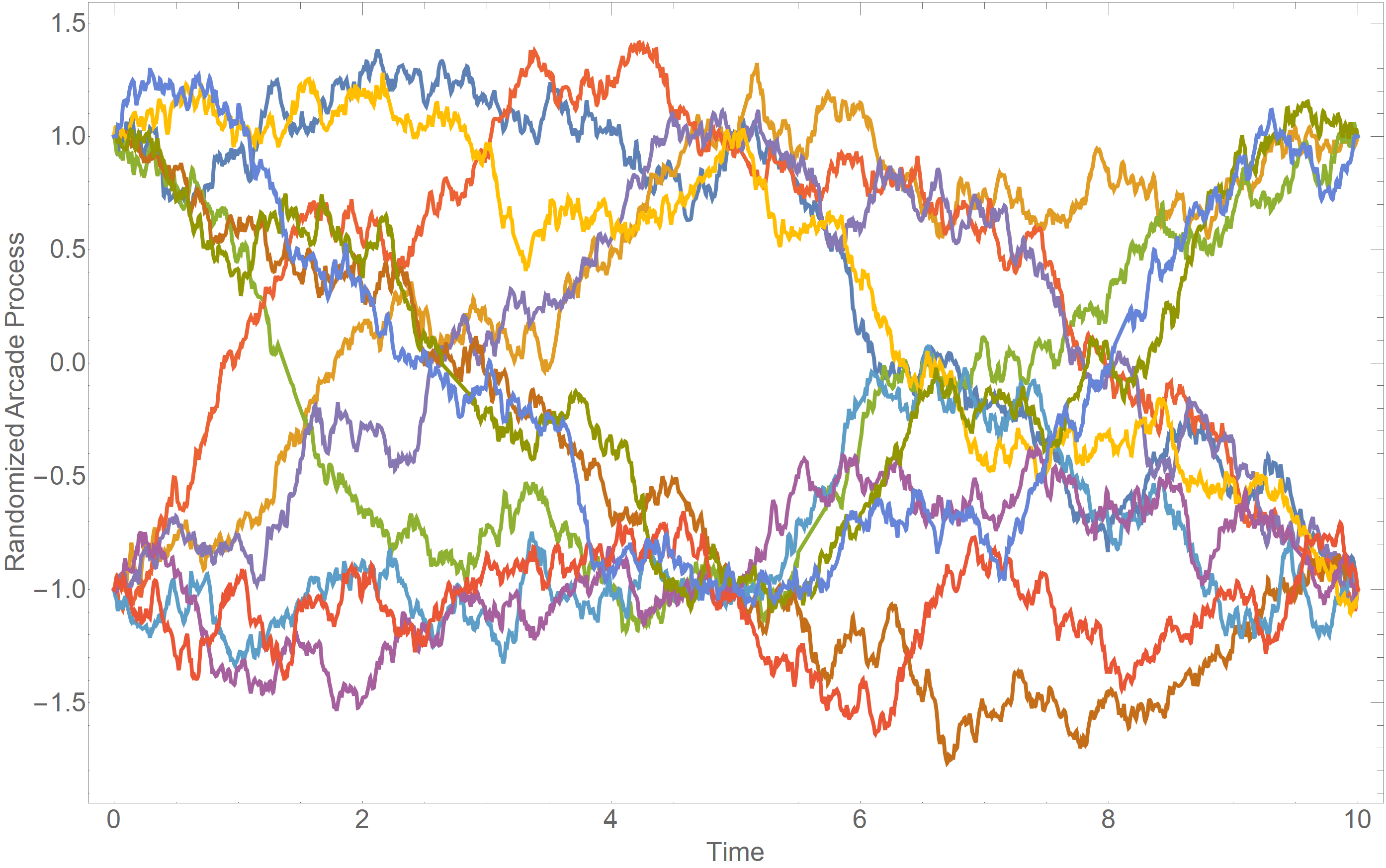}}}
     \caption{Paths simulation of a non-standard $X$-RAP, where the noise is rescaled by 0.3 and $\{T_0,T_2\}_*=\{0,5,10\}$, cf., Example \ref{exf}.}
   \end{minipage}
\end{figure}
\end{ex}
\section{Filtered arcade martingales} \label{FAMchapter}
In this section, given a martingale vector $X=(X_0,\ldots,X_n)$, i.e., a finite discrete-time martingale, we construct continuous-time martingales with respect to the filtration generated by an $X$-RAP, which interpolate between the convexly ordered components of the random vector $X$. These martingales solve an underlying stochastic filtering problem, and extend the martingale class of the information-based theory developed in \cite{BHM1}, see also \cite{BHM2}. We call such martingales \emph{filtered arcade martingales} (FAMs). We introduce the following notation:
\begin{enumerate}
    \item $\mathcal P(\R^n)$ is the set of Borel probability measures on $\R^n$, for $n\in \N_0$.
    \item $\mathcal P_1(\R^n)$ is the set of Borel probability measures on $\R^n$, for $n\in \N_0$, with finite first moment.
    \item $\Pi(\mu_0,\mu_1,\ldots, \mu_n):=  \{ \pi \in \mathcal P(\R^{n+1}) \| \mu_{i-1} \text{ is the ith marginal measure of } \pi, \text{ for all }  \\ i=1,\ldots, n+1 \}$ is the set of couplings of $\{\mu_0,\mu_1,\ldots, \mu_n\} \subseteq \mathcal P (\R)$.
    \item $\mathcal M (\mu_0,\mu_1,\ldots, \mu_n):=\{\pi \in \Pi(\mu_0,\mu_1,\ldots, \mu_n) \cap \mathcal P_1(\R^{n+1}) \| (X_0,X_1,\ldots, X_n) \sim \pi \implies \\ \E[X_n \| X_0,\ldots, X_m] = X_m \text{ for all } m=0,\ldots, n\}$ is the set of martingale couplings of $\{\mu_0,\mu_1,\ldots, \mu_n\} \subseteq \mathcal P (\R)$.
\end{enumerate}
We recall that $\mathcal M (\mu_0,\mu_1,\ldots, \mu_n) \neq \emptyset$ if and only if $\int_\R f(x) \d \mu_0(x) \leqslant \int_\R f(x) \d \mu_1(x)\leqslant \ldots \leqslant \int_\R f(x) \d \mu_n(x)$ for any convex function $f$ on $\R$, i.e, the measures $\mu_0,\mu_1,\ldots, \mu_n$ are convexly ordered. In that case, we write $\mu_0\leqslant_{\mathrm{cx}}\mu_1\leqslant_{\mathrm{cx}}\ldots\leqslant_{\mathrm{cx}} \mu_n$.
\subsection{The one-arc FAM} \label{1arccase}
Let $X=(X_0,X_1)$ be a random vector distributed according to a martingale coupling $\pi^X \in \mathcal M (\mu_0,\mu_1)$, where $(\mu_0,\mu_1) \in \mathcal P_1 (\R) \times \mathcal P_1 (\R)$ and $\mu_0 \leqslant_{\mathrm{cx}}\mu_1$. It follows that $X_0 \sim \mu_0$, $X_1 \sim \mu_1$, and $\E[X_1 \mid X_0 ] \eqas X_0$. Given an $X$-RAP $(I_t^{(1)})_{t \in [T_0,T_1]}$ on the partition $\{T_0,T_1\}_*$, we aim at constructing an $(\F^I_t)$-martingale $(M_t)_{t\in[T_0, T_1]}$ such that $M_{T_0}\eqas X_0$ and $M_{T_1}\eqas X_1$. The BHM framework developed in \cite{BHM1} is recovered when $X_0 =\E[X_1]=0$ and $T_0=0$.
\begin{defn} \label{1FAM}
Given an $X$-RAP $(I_t^{(1)})$, a one-arc FAM for $X\sim \pi^X$ on $[T_0,T_1]$ is a stochastic process defined by $M_t:=\E[X_1 \mid \F_t^I ]$.
\end{defn}
The choice of the coupling $\pi^X$, alongside the interpolating coefficients and the driver of $(I_t^{(1)})$, dictate the behaviour of the paths of a FAM, since they tell us how $X_0$, $X_1$, and the information process $(I_t^{(1)})$ are related to each other.
\begin{prop}
The FAM $(M_t)_{t\in[T_0, T_1]}$ is an $(\F_t^I)$-martingale that interpolates between $X_0$ and $X_1$, almost surely.
\end{prop}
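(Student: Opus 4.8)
The plan is to verify three things in turn: that $(M_t)$ is a well-defined $(\F_t^I)$-martingale, that $M_{T_1}\eqas X_1$, and that $M_{T_0}\eqas X_0$. The first two are essentially formal; the only step requiring a moment's care is the identification of the initial $\sigma$-algebra $\F_{T_0}^I$.

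First I would record that $X_1\in L^1$: since $\pi^X\in\mathcal M(\mu_0,\mu_1)\subseteq\mathcal P_1(\R^2)$, the marginal $\mu_1$ has a finite first moment, so $M_t=\E[X_1\|\F_t^I]$ is well defined, integrable and $(\F_t^I)$-adapted. The martingale property is then immediate from the tower property: for $T_0\leqslant s\leqslant t\leqslant T_1$ we have $\F_s^I\subseteq\F_t^I$, hence $\E[M_t\|\F_s^I]=\E[\E[X_1\|\F_t^I]\|\F_s^I]=\E[X_1\|\F_s^I]=M_s$. Thus $(M_t)$ is a closed (uniformly integrable) $(\F_t^I)$-martingale, and nothing more is needed here.

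Next, for the terminal value I would invoke the strong, $\omega$-by-$\omega$ interpolation property of the $X$-RAP from Section 3, namely $I_{T_1}^{(1)}=X_1$ for every $\omega$. This makes $X_1$ measurable with respect to $\F_{T_1}^I$, so $M_{T_1}=\E[X_1\|\F_{T_1}^I]\eqas X_1$. For the initial value, the key observation is that at the left endpoint the natural filtration of the RAP is generated by the single random variable $I_{T_0}^{(1)}$, that is $\F_{T_0}^I=\sigma(I_{T_0}^{(1)})=\sigma(X_0)$, again using $I_{T_0}^{(1)}=X_0$. Hence $M_{T_0}=\E[X_1\|X_0]$. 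Since $(X_0,X_1)\sim\pi^X$ and $\pi^X$ is a martingale coupling, the defining property of $\mathcal M(\mu_0,\mu_1)$ gives $\E[X_1\|X_0]\eqas X_0$, and therefore $M_{T_0}\eqas X_0$.

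I do not anticipate a genuine obstacle: the substance of the statement is merely the combination of the endpoint interpolation $I_{T_0}^{(1)}=X_0$, $I_{T_1}^{(1)}=X_1$ with the martingale-coupling hypothesis on $\pi^X$, together with the elementary fact that the conditional expectation of a fixed integrable random variable along an increasing filtration is a martingale. The one place to be slightly careful is the claim $\F_{T_0}^I=\sigma(X_0)$; if one works with the $\P$-augmented filtration this holds only up to null sets, which is harmless for the almost-sure identity $M_{T_0}\eqas X_0$.
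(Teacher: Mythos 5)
Your proposal is correct and follows essentially the same route as the paper's own proof: integrability of $X_1$ from the martingale-coupling hypothesis, the tower property for the martingale property, $I_{T_1}^{(1)}=X_1$ for the terminal value, and $\F_{T_0}^I=\sigma(X_0)$ together with $\E[X_1\|X_0]=X_0$ for the initial value. Your extra remark about the augmented filtration at $T_0$ is a harmless refinement the paper leaves implicit.
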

\begin{proof}
The process $(M_t)$ is adapted to $(\mathcal F_t^I)$ by construction. We show that it is a martingale that interpolates between $X_0$ and $X_1$ as follows.
\begin{enumerate}
    \item $\E[|M_t|] < \infty$ for all $t\in [T_0,T_1]$ by Jensen inequality, since $\E[|X_1|] < \infty$.
    \item For $s<t$, $\E[M_t \mid \F_s^I] = M_s$ by the tower property of the conditional expectation.
    \item $M_{T_0} =\E[X_1 \mid X_0] \eqas X_0$, since $\pi^X \in \mathcal M (\mu_0,\mu_1)$ and $I_{T_0}^{(1)}=X_0$.
    \item $M_{T_1}= \E[X_1 \mid \F_{T_1}^I] \eqas X_1$, since $\sigma(X_1) \subset \F_{T_1}^I$ by construction of $(I_t^{(1)})$.
\end{enumerate}
Hence, $(M_t)$ is an $(\F_t^I)$-martingale that interpolates almost surely between $X_0$ and $X_1$ over the interval $[T_0,T_1]$.
\end{proof}
\begin{rem}
The process $(M_t)_{t\in [T_0,T_1]}$ is also a martingale with respect to its natural filtration, denoted $(\F_t^M)_{t\in[T_0, T_1]}$. For $s<t$, we have
\begin{align}
    \E [M_t \mid \F^M_s]\eqas   \E[\E [M_t \mid \F^M_s] \mid \F^I_s] \eqas \E[\E [M_t \mid \F^I_s] \mid \F^M_s] \eqas\E[ M_s\mid  \F^M_s] \eqas M_s.
\end{align}
\end{rem}
To assist with the derivation of the outcomes in the remainder of this section, we give a brief overview of the goals and forthcoming results as a guide. First, we recall the martingale representation theorem (MRT), as presented in \cite{Revuz}, Theorem 4.2.
\\
\\
{\bf Martingale representation theorem}, \cite{Revuz}.
{\it
Let $(W_t)_{t \geqslant 0}$ be a standard Brownian motion on a filtered probability space $(\Omega, \mathcal{F},\left(\mathcal{F}_t\right)_{t\ge 0}, \mathbb{P})$ satisfying the usual conditions.\footnote{The usual conditions are:
\begin{enumerate}
    \item Right-continuity of the filtration:
    \begin{equation*}
    \mathcal{F}_t=\bigcap_{s>t} \mathcal{F}_s \quad \text { for all } t \geqslant 0 .
    \end{equation*}
    \item Completeness of the probability space: The $\sigma$-algebra $\mathcal{F}_0$ contains all $\mathbb{P}$-null sets of $\F$. That is, if $A \in \F$ and $\mathbb{P}(A)=0$, then $A \in \F_0$.
    \item Adaptedness of the Brownian motion: The Brownian motion $(W_t)$ is adapted to the filtration $(\F_t)$.
\end{enumerate}
} Then, every square integrable $(\F_t$)-martingale $(M_t)_{t\ge 0}$ can be represented by
\begin{equation}
M_t = M_0+\int_0^t \phi_s \rd W_s
\end{equation}
almost surely, where $(\phi_t)$ is a predictable process such that $\int_0^T \phi_s^2 \rd s<\infty$ almost surely for all $T>0$.
}

The MRT assumes that the filtration $(\F_t)$ be generated by a Brownian motion (or at least rich enough to support one). If $(\F_t)$ is more general or includes information not generated by Brownian motion (e.g., jump processes, discrete-time processes, or other sources of randomness), then:
\begin{itemize}
    \item A Brownian motion may not exist with respect to the filtration $(\F_t)$.
    \item Even if a Brownian motion exists, it may not generate the entire filtration; in this case the MRT would not apply.
\end{itemize}
The main goals of this section are: 
\begin{itemize}
    \item Show that even though $(\F_t^I)$ is not generated by a Brownian motion, under appropriate conditions, one may write 
    \begin{equation}
        M_t \eqas X_0 + \int_0^t \sigma_s \rd W_s,
    \end{equation}
    where $(W_t)$ is an $(\F_t^I)$-adapted Brownian motion.
    \item Identify the processes $(\sigma_t)$ and $(W_t)$, i.e., give their explicit, almost sure expressions.
\end{itemize}
To achieve these goals, we proceed as follows.
\begin{enumerate}
    \item In Proposition \ref{relations}, we bring to the fore a relationship between $\E[I_t^{(1)} \mid \F^I_s]$, $M_s$, and $\E[ \E [A_t^{(1)} \mid \F^A_s] \mid \F^I_s]$ for any pair $(s,t) \in [T_0,T_1]^2$ such that $s\leqslant t$. This relationship will play a key role in finding the $(\F_t^I)$-adapted Brownian motion $(W_t)$. 
    \item In the case where $M_t=\E[X_1 \mid \F_t^I]=\E[X_1 \mid X_0, I_t^{(1)}]$ holds, i.e., in the case that $(I_t^{(1)})$ is a conditional-Markov process, we apply the Bayes rule to $(M_t)$ in Proposition \ref{Bayes} to obtain an integral representation that depends on the conditional density of $(I_t^{(1)})$ given $X_0$ and $X_1$, and the conditional distribution function of $X_1$ given $X_0$.
    \item We then apply Itô's formula to the Bayes integral representation in Proposition \ref{Ito}. This allows us to obtain the general SDE satisfied by a FAM that is driven by a conditional-Markov RAP, under the appropriate conditions implied by Itô's formula. It then turns out that the SDE is composed by a $\rd t$-term, a $\rd I_t^{(1)}$-term, and a $\rd [I^{(1)}]_t\,$-term.
    \item We specialise the SDE in two steps. First, we add the assumption that the driver of $(I_t^{(1)})$ is Gaussian in Corollary \ref{Itocoro1}. This allows us to calculate all the terms in the SDE explicitly. Then, we add the assumption that $(I_t^{(1)})$ is a standard RAP, see Corollary \ref{Itocoro2}, which allows us to calculate $[I^{(1)}]_t$ explicitly and further simplify the terms in the SDE. These now only have a $\rd t$-term and a $\rd I_t^{(1)}$-term, in which a common factor appears.
    \item Removing that common factor, Proposition \ref{propinnovations} shows that what is left is an $(\F^I_t)$-adapted Brownian motion, up to rescaling. Thus, the common factor yields the volatility process $(\sigma_t)$. Hence, one has achieved one's goal. This plan is rather standard in a non-linear stochastic filtering context, see, e.g., \cite{FKK}.
\end{enumerate}
We now begin with the first step of the described plan, above. Since equalities involving conditional expectations are understood in the almost sure sense, we will omit the a.s. ``symbol'' in "$\eqas$" from now on in equalities involving FAMs. 

A relationship that we shall use later between the RAP $(I_t^{(1)})$, its noise process $(A_t^{(1)})$, and its associated FAM $(M_t)$ is the following:
\begin{prop} \label{relations}
If $I_t^{(1)}= g_0(t)X_0 + g_1(t) X_1 + A_t^{(1)}$ and $M_t=\E[X_1 \mid \F_t^I ]$, then 
\begin{equation} \label{rel2}
    \E[I_t^{(1)} \mid \F^I_s]=g_0(t) X_0 + g_1(t) M_s + \E[ \E [A_t^{(1)} \mid \F^A_s] \mid \F^I_s]
\end{equation}
for any pair $(s,t) \in [T_0,T_1]^2$ such that $s\leqslant t$. Furthermore, if $(A_t^{(1)})$ is Gauss-Markov with $K_A(x,y)= A_1(\min(x,y))A_2(\max(x,y))$, then
\begin{align}
    \E[I_t^{(1)} \mid \F^I_s]&=  \left(g_0(t) - \frac{A_2(t)}{A_2(s)} g_0(s) \right)  X_0 + \left(g_1(t)- \frac{A_2(t)}{A_2(s)} g_1(s)\right) M_s\nn \\
    & \quad + \frac{A_2(t)}{A_2(s)} I_s  + \mu_A(t) +\frac{A_2(t)}{A_2(s)} \mu_A(s)
\end{align}
for any $(s,t) \in [T_0,T_1]^2$ such that $s\leqslant t$.
\end{prop}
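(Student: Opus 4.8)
### Proof proposal

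The plan is to establish the first identity \eqref{rel2} directly from the decomposition $I_t^{(1)} = g_0(t)X_0 + g_1(t)X_1 + A_t^{(1)}$, and then to specialise to the Gauss-Markov case using a conditional-independence argument that reduces the awkward nested conditional expectation $\E[\E[A_t^{(1)}\|\F^A_s]\|\F^I_s]$ to something explicit. First I would apply $\E[\,\cdot\,\|\F^I_s]$ to the decomposition: the term $g_0(t)X_0$ is $\F^I_s$-measurable since $I_{T_0}^{(1)}=X_0$ and $T_0\leqslant s$, so it passes through; the term $g_1(t)X_1$ yields $g_1(t)\E[X_1\|\F^I_s]=g_1(t)M_s$ by the definition of the FAM; and for the noise term I would insert a tower step, $\E[A_t^{(1)}\|\F^I_s]=\E[\E[A_t^{(1)}\|\F^A_s\vee\F^I_s]\|\F^I_s]$, then argue that conditionally on $\F^I_s$ the future noise increment depends on the past only through $\F^A_s$. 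The cleanest way to see this is that $\F^I_s\subseteq \F^A_s\vee\sigma(X)$, that $(A_t^{(1)})\ind\sigma(X)$, and that $A_t^{(1)}$ is $\F^A_s$-conditionally independent of the rest — giving $\E[A_t^{(1)}\|\F^A_s\vee\F^I_s]=\E[A_t^{(1)}\|\F^A_s]$, which delivers \eqref{rel2}.

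For the second, explicit formula I would now use that $(A_t^{(1)})$ is Gauss-Markov with $K_A(x,y)=A_1(\min(x,y))A_2(\max(x,y))$. By Theorem \ref{thmmarkov} (and the Gaussian regression formula used in its proof), for $s\leqslant t$ in the same arc $[T_0,T_1]$ one has $\E[A_t^{(1)}\|\F^A_s] = \mu_A(t) + \frac{A_2(t)}{A_2(s)}\bigl(A_s^{(1)}-\mu_A(s)\bigr)$, since the Markov property plus Gaussianity forces the conditional expectation to be an affine function of $A_s^{(1)}$ with slope $K_A(s,t)/K_A(s,s) = A_2(t)/A_2(s)$. Substituting this into \eqref{rel2} gives
\begin{equation*}
\E[I_t^{(1)}\|\F^I_s] = g_0(t)X_0 + g_1(t)M_s + \mu_A(t) + \frac{A_2(t)}{A_2(s)}\,\E\bigl[A_s^{(1)}-\mu_A(s)\,\big|\,\F^I_s\bigr].
\end{equation*}
Then I would rewrite $A_s^{(1)} = I_s^{(1)} - g_0(s)X_0 - g_1(s)X_1$ inside the remaining conditional expectation; since $I_s^{(1)}$ and $g_0(s)X_0$ are $\F^I_s$-measurable and $\E[X_1\|\F^I_s]=M_s$, this collapses to $I_s^{(1)} - g_0(s)X_0 - g_1(s)M_s - \mu_A(s)$. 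Collecting the coefficients of $X_0$, $M_s$ and $I_s^{(1)}$ yields the stated expression (up to the bookkeeping of the two $\mu_A$ terms, which should be checked against signs — I suspect the intended formula has $-\frac{A_2(t)}{A_2(s)}\mu_A(s)$ rather than $+$, but I would follow the computation).

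The main obstacle is the conditional-independence step needed to justify $\E[A_t^{(1)}\|\F^A_s\vee\F^I_s]=\E[A_t^{(1)}\|\F^A_s]$. One must be careful that $\F^I_s$ adds only information already captured by $\F^A_s$ together with $\sigma(X_0,X_1)$, and that $X$ is independent of the whole driver path (hence of $(A_t^{(1)})$), so that enlarging $\F^A_s$ by $\sigma(X)$ does not change the conditional law of the $\F^A_s$-future of the noise; the Markov property of $(A_t^{(1)})$ then handles the genuine noise-past. Everything after that is linear algebra in Gaussian conditional expectations and substitution, which I would not belabour.
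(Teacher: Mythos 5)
Your proof is correct and follows essentially the same route as the paper's: the same decomposition of $I_t^{(1)}$, the same tower/independence argument reducing $\E[A_t^{(1)}\|\F^I_s]$ to $\E[\E[A_t^{(1)}\|\F^A_s]\|\F^I_s]$ (the paper conditions on $\sigma(X_0,X_1)\vee\F^A_s$ rather than $\F^A_s\vee\F^I_s$, an immaterial difference), the Gaussian regression formula with slope $K_A(s,t)/\sigma_A^2(s)=A_2(t)/A_2(s)$, and the substitution $\E[A_s^{(1)}\|\F^I_s]=I_s^{(1)}-g_0(s)X_0-g_1(s)M_s$. Your suspicion about the sign is also well founded: the computation produces $-\frac{A_2(t)}{A_2(s)}\mu_A(s)$, so the $+$ in the stated display is a typo.
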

\begin{proof}
Let $(s,t) \in [T_0,T_1]^2$ be such that $s\leqslant t$. We notice that 
\begin{align}
    \E[I_t^{(1)} \mid \F^I_s]&= g_0(t) X_0 + g_1(t) M_s + \E[A_t^{(1)} \mid \F^I_s] \nn \\
    &= g_0(t) X_0 + g_1(t) M_s + \E[ \E [A_t^{(1)} \mid \F^A_s] \mid \F^I_s],
\end{align}
since $\E[A_t^{(1)} \mid \F^I_s] = \E[ \E [A_t^{(1)} \mid X_0,X_1, \F^A_s] \mid \F^I_s] =\E[ \E [A_t^{(1)} \mid \F^A_s] \mid \F^I_s]$, where we used the fact that $(A_t^{(1)}) \ind (X_0,X_1)$ by Definition \ref{RAP}. If $(A_t^{(1)})$ is Gauss-Markov, then 
\begin{equation}
    \E [A_t^{(1)} \mid \F^A_s] = \E [A_t^{(1)} \mid A_s^{(1)}] = \mu_A(t) + \frac{K_A(s,t)}{\sigma_A^2(s)} \left( A_s^{(1)}  - \mu_A(s) \right).
\end{equation}
Hence, by the linearity property of the conditional expectation, we have
\begin{equation} \label{rel4}
    \E[ \E [A_t^{(1)} \mid \F^A_s] \mid \F^I_s] = \mu_A(t) + \frac{K_A(s,t)}{\sigma_A^2(s)} \left( \E[A_s^{(1)} \mid \F^I_s]  - \mu_A(s) \right).
\end{equation}
Moreover,
\begin{equation}
    I_t^{(1)} = \E[I_t^{(1)} \mid \F^I_t] = \E[A_t^{(1)} \mid \F^I_t] + g_0(t) X_0+g_1(t) M_t,
\end{equation}
which implies 
\begin{equation} \label{rel1}
    \E[A_t^{(1)} \mid \F^I_t]= I_t^{(1)} - g_0(t) X_0-g_1(t) M_t.
\end{equation}
Then, inserting Eq. (\ref{rel1}) into Eq. (\ref{rel4}), and recalling that $K_A(s,t)=A_1(s)A_2(t)$, gives
\begin{align}
    \E[I_t^{(1)} \mid \F^I_s]&=  \left(g_0(t) - \frac{A_2(t)}{A_2(s)} g_0(s) \right)  X_0 + \left(g_1(t)- \frac{A_2(t)}{A_2(s)} g_1(s)\right) M_s\nn \\
    & \quad + \frac{A_2(t)}{A_2(s)} I_s  + \mu_A(t) +\frac{A_2(t)}{A_2(s)} \mu_A(s). \label{rel3}
\end{align}
\end{proof}
If $(I_t^{(1)})$ is $\{T_0,T_1\}$-conditionally Markov, then $M_t = \E [ X_1 \mid X_0, I_t^{(1)} ] $. In this case, the dynamics of $(M_t)$ can be obtained using Bayes' rule and Itô's formula under mild assumptions. In the remainder of this section, we assume that the driver of $(I_t^{(1)})$ has a density function for $t \in [T_0,T_1]$. In what follows, when we write $M_t = \E [ X_1 \mid X_0, I_t^{(1)} ]$, we assume that $(I_t^{(1)})$ is $\{T_0,T_1\}$-conditionally Markov.
\begin{prop} \label{Bayes}
Let $M_t = \E [ X_1 \mid X_0, I_t^{(1)} ]$ be a one-arc FAM restricted to $t\in(T_0,T_1)$. Then
\begin{equation} \label{Bayeseq}
    M_t= \frac{\int_\R y f^{I_t^{(1)} \mid X_0,X_1=y} (I_t^{(1)})  \rd F^{X_1 \mid X_0} (y)}{\int_\R f^{I_t^{(1)} \mid X_0,X_1=y} (I_t^{(1)})  \rd F^{X_1 \mid X_0} (y)},
\end{equation}
where $f^{I_t^{(1)} \mid X_0,X_1}$ is the conditional density of $(I_t^{(1)})$ given $X_0$ and $X_1$, and $F^{X_1 \mid X_0}$ is the conditional distribution function of $X_1$ given $X_0$. In particular,
\begin{enumerate}
    \item If $(X_0,X_1)$ is a continuous random vector, then 
    \begin{equation}
    M_t = \frac{\int_\R y f^{I_t^{(1)} \mid X_0,X_1=y} (I_t^{(1)})   f^{X_1 \mid X_0}(y) \rd y}{\int_\R f^{I_t^{(1)} \mid X_0,X_1=y} (I_t^{(1)}) f^{X_1 \mid X_0}(y) \rd y}
\end{equation}    
\item If $(X_0,X_1)$ is a discrete random vector, then 
\begin{equation}
    M_t = \frac{\sum\limits_{y} y f^{I_t^{(1)} \mid X_0,X_1=y} (I_t^{(1)}) \P[X_1=y \mid X_0]  }{\sum\limits_{y}  f^{I_t^{(1)} \mid X_0,X_1=y} (I_t^{(1)}) \P[X_1=y \mid X_0]} .
\end{equation}
\end{enumerate}
\end{prop}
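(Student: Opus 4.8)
The plan is to prove \eqref{Bayeseq} by a direct disintegration (``Bayes for nonlinear filtering'') argument, in the spirit of the information-based approach. I would start from the structural decomposition that, for $t\in(T_0,T_1)$, $I_t^{(1)}=g_0(t)X_0+g_1(t)X_1+A_t^{(1)}$ with $A_t^{(1)}=D_t-f_0(t)D_{T_0}-f_1(t)D_{T_1}$ independent of $(X_0,X_1)$ by Definition~\ref{RAP}. The standing density assumption on the driver guarantees that, for $t$ in the open arc (at the endpoints $A^{(1)}$ vanishes, so $I_{T_i}^{(1)}=X_i$ and there is nothing to condition on), conditionally on $(X_0,X_1)=(x,y)$ the variable $I_t^{(1)}$ admits the density $z\mapsto f^{I_t^{(1)}\|X_0=x,X_1=y}(z)$, namely the law of $A_t^{(1)}$ translated by $g_0(t)x+g_1(t)y$. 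This is the object in the statement.

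Next I would characterise $M_t=\E[X_1\|X_0,I_t^{(1)}]$ by its defining property: $\E[X_1\,\phi(X_0,I_t^{(1)})]=\E[M_t\,\phi(X_0,I_t^{(1)})]$ for every bounded Borel $\phi$ on $\R^2$. Evaluating the left-hand side by conditioning on $(X_0,X_1)$ first and using $A_t^{(1)}\ind(X_0,X_1)$ turns it into $\int_\R\int_\R\int_\R y\,\phi(x,z)\,f^{I_t^{(1)}\|X_0=x,X_1=y}(z)\,\d z\,\d F^{X_1\|X_0=x}(y)\,\d F^{X_0}(x)$, the interchange of integrations being licensed by $\E[|X_1|]<\infty$ (Tonelli). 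Recognising $\int_\R f^{I_t^{(1)}\|X_0=x,X_1=y}(z)\,\d F^{X_1\|X_0=x}(y)=f^{I_t^{(1)}\|X_0=x}(z)$ as the density of $I_t^{(1)}$ given $X_0=x$, I would rewrite the integrand as $\phi(x,z)$ times this density times the ratio appearing in \eqref{Bayeseq}; since $\phi$ is arbitrary, almost-sure uniqueness of the conditional expectation identifies $M_t$ with that ratio evaluated at $(X_0,I_t^{(1)})$. The two itemised cases then follow by specialising $\d F^{X_1\|X_0}(y)$: to $f^{X_1\|X_0}(y)\,\d y$ in the absolutely continuous case (then writing $f^{X_1\|X_0}(y)=f^{(X_0,X_1)}(X_0,y)/f^{X_0}(X_0)$, the factor $f^{X_0}(X_0)$ cancelling between numerator and denominator), and to $\sum_y\P[X_1=y\|X_0]\,\delta_y$ in the discrete case.

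I expect the genuine difficulty to lie in the measure-theoretic bookkeeping rather than in the algebra: making rigorous sense of the regular conditional density $f^{I_t^{(1)}\|X_0,X_1}$ for $t$ strictly inside the arc and of its joint measurability in $(x,y,z)$, and justifying the Fubini/Tonelli interchanges and the passage from ``holds for every $\phi$'' to the pointwise identity. Once the conditional densities are available, the remaining derivation is routine.
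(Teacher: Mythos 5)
Your argument is correct, but it is not the route the paper takes. The paper proves \eqref{Bayeseq} by the elementary ``thickened-event'' Bayes computation: it writes $\P[X_1\leqslant y \| X_0,\, z\leqslant I_t^{(1)}\leqslant z+\epsilon]$ as a ratio via Bayes' rule, expands the denominator as $\int_\R \P[z\leqslant I_t^{(1)}\leqslant z+\epsilon\| X_0,X_1=y]\,\d F^{X_1\|X_0}(y)$, lets $\epsilon\to 0$ and differentiates in $z$ to obtain the full posterior $\d F^{X_1\|X_0,I_t^{(1)}=z}(y)=f^{I_t^{(1)}\|X_0,X_1=y}(z)\,\d F^{X_1\|X_0}(y)\big/\int_\R f^{I_t^{(1)}\|X_0,X_1=y}(z)\,\d F^{X_1\|X_0}(y)$, and then integrates $y$ against it. You instead characterise $M_t$ through the defining identity $\E[X_1\,\phi(X_0,I_t^{(1)})]=\E[M_t\,\phi(X_0,I_t^{(1)})]$, disintegrate using $A_t^{(1)}\ind(X_0,X_1)$, recognise $\int_\R f^{I_t^{(1)}\|X_0=x,X_1=y}(z)\,\d F^{X_1\|X_0=x}(y)$ as the density of $I_t^{(1)}$ given $X_0=x$, and identify $M_t$ by a.s.\ uniqueness of conditional expectation. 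The trade-off: your version is the more defensible one measure-theoretically (the paper's limit interchange and the differentiation $\frac{\d}{\d z}\P[I_t^{(1)}\leqslant z\|X_0,X_1\leqslant y]$ are left unjustified), and it needs only Fubini plus $\E[|X_1|]<\infty$; the paper's version, on the other hand, delivers the entire regular conditional law of $X_1$ given $(X_0,I_t^{(1)})$, not just its mean, which is what the subsequent corollaries use when they invoke $\E[X_1^2\|X_0,I_t^{(1)}]$ and $\E[X_1^3\|X_0,I_t^{(1)}]$. Your approach recovers that too by testing against $\psi(X_1)\phi(X_0,I_t^{(1)})$, but as written it only identifies the conditional mean. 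The two itemised specialisations are handled identically in both arguments.
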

\begin{proof}
By the Bayes rule,
\begin{align}
    \P [ X_1 \leqslant y \mid X_0, z  \leqslant I_t^{(1)} \leqslant z + \epsilon ] &= \frac{\P [ z  \leqslant I_t^{(1)} \leqslant z + \epsilon \mid X_0,  X_1 \leqslant y ] \P [ X_1 \leqslant y \mid X_0]}{ \P [ z  \leqslant I_t^{(1)} \leqslant z + \epsilon \mid X_0 ]}\nn\\
    &= \frac{\P [ z  \leqslant I_t^{(1)} \leqslant z + \epsilon \mid X_0,  X_1 \leqslant y ] \P [ X_1 \leqslant y \mid X_0]}{ \int_\R \P [ z  \leqslant I_t^{(1)} \leqslant z + \epsilon \mid X_0 , X_1 =y] \rd F^{X_1\mid X_0}(y) }.
\end{align}
This means that, by taking the limit when $\epsilon \rightarrow 0$, 
\begin{equation}
    F^{X_1\mid X_0,I_t^{(1)} =z}(y) = \frac{ \frac{\rd}{\rd z}\P [I_t^{(1)} \leqslant z \mid X_0, X_1 \leqslant y]  F^{X_1\mid X_0}(y)}{ \int_\R f^{I_t^{(1)} \mid X_0,X_1 = y} (z)\rd F^{X_1\mid X_0}(y) },
\end{equation}
which implies 
\begin{equation}
    \rd F^{X_1\mid X_0,I_t^{(1)} =z}(y) = \frac{f^{I_t^{(1)} \mid X_0,X_1 =y} (z)\rd F^{X_1\mid X_0}(y) }{ \int_\R f^{I_t^{(1)} \mid X_0,X_1 = y} (z)\rd F^{X_1\mid X_0}(y) }.
\end{equation}
Inserting the expression for $\rd F^{X_1\mid X_0,I_t^{(1)} =z}(y)$ into $M_t$, we obtain 
\begin{equation}
    M_t= \int_{\R} y  \rd F^{X_1\mid X_0,I_t^{(1)}}(y) = \frac{\int_\R y f^{I_t^{(1)} \mid X_0,X_1=y} (I_t^{(1)})  \rd F^{X_1 \mid X_0} (y)}{\int_\R f^{I_t^{(1)} \mid X_0,X_1=y} (I_t^{(1)})  \rd F^{X_1 \mid X_0} (y)}.
\end{equation}
\end{proof}
To simplify the expressions, we introduce the following notation:
\begin{equation}
    u(t, z, X_0, y) :=  f^{I_t^{(1)} \mid X_0,X_1=y} (z), \quad u_t(t, z, X_0, y) := \frac{\partial}{\partial t}u(t, z, X_0, y),
\end{equation}
\begin{equation}
    u_z(t, z, X_0, y) := \frac{\partial }{\partial z}u(t, z, X_0, y), \quad u_{z z} (t, z, X_0, y) := \frac{\partial^2 }{\partial z^2}u(t, z, X_0, y),
\end{equation}
\begin{equation}
    K_{\boldsymbol{\cdot}} (t,z,X_0):=\int_\R  u_{\boldsymbol{\cdot}}  (t, z, X_0, y) \rd F^{X_1 \mid X_0} (y),
\end{equation}
\begin{equation}
    V_{\boldsymbol{\cdot}}  (t,z,X_0):=\int_\R y u_{\boldsymbol{\cdot}}  (t, z, X_0, y)\rd F^{X_1 \mid X_0} (y).
\end{equation}
Thus, under the conditions of Proposition $\ref{Bayes}$, we may write Eq. (\ref{Bayeseq}) as
\begin{equation}
     M_t =  \frac{V(t,I_t^{(1)},X_0)}{K(t,I_t^{(1)},X_0)}.
\end{equation}
Several examples of one-arc FAMs are provided in \ref{appendixB}. Next, we we go about deriving the stochastic differential equation for a class of one-arc FAMs.
\begin{prop} \label{Ito}
Let $M_t = \E [ X_1 \mid X_0, I_t^{(1)} ]$ be a one-arc FAM. If the $X$-RAP $(I_t^{(1)})$ is a semimartingale such that $(t,x) \rightarrow \frac{V(t,x,X_0)}{K(t,x,X_0)}$ is $C^2(((T_0,T_1) \setminus N) \times Im(I^{(1)}))$ where $N \subset (T_0,T_1)$ contains finitely many elements, then
\begin{align}
    \rd M_t =& \frac{V_t(t,I_t^{(1)},X_0)-M_t K_t(t,I_t^{(1)},X_0) }{K(t,I_t^{(1)},X_0)} \rd t + \frac{V_z(t,I_t^{(1)},X_0)-M_t K_z(t,I_t^{(1)},X_0) }{K(t,I_t^{(1)},X_0)} \rd I^{(1)}_t \nn \\ 
    &+ \Bigg(  \frac{M_t K_z^2(t,I_t^{(1)},X_0) -K_z(t,I_t^{(1)},X_0)V_z(t,I_t^{(1)},X_0)}{K^2(t,I_t^{(1)},X_0)} \nn \\
     & \hspace{5cm}+ \frac{V_{z z}(t,I_t^{(1)},X_0)-M_t K_{z z}(t,I_t^{(1)},X_0) }{2K(t,I_t^{(1)},X_0)}  \Bigg)  \rd [I^{(1)}]_t.
\end{align}
for $t \in (T_0,T_1)$.
\end{prop}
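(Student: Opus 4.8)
The plan is to recognize the one-arc FAM as a sufficiently smooth function of time and of the semimartingale $(I_t^{(1)})$, and then to invoke the Itô formula for continuous semimartingales. Under the standing assumptions of this subsection $(I_t^{(1)})$ is $\{T_0,T_1\}$-nearly-Markov, so that $M_t=\E[X_1\mid X_0,I_t^{(1)}]$, and Proposition~\ref{Bayes} together with the abbreviations $V,K$ introduced above yields the representation $M_t=g(t,I_t^{(1)})$ with
\begin{equation*}
g(t,z):=\frac{V(t,z,X_0)}{K(t,z,X_0)},
\end{equation*}
where $K(t,z,X_0)>0$ on the range of interest (it is an integral of a conditional density) and, by hypothesis, $g\in C^{1,2}$ on $((T_0,T_1)\setminus N)\times Im(I^{(1)})$ for a finite set $N$. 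Since $X_0=I_{T_0}^{(1)}$ is $\F^I_{T_0}$-measurable, it acts as a frozen parameter: one can either argue conditionally on $X_0$, or appeal to the version of Itô's formula allowing a measurable dependence of the test function on $\F^I_{T_0}$, so I will treat $X_0$ as a constant throughout.

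Next I would split $(T_0,T_1)$ at the finitely many points of $N$ and apply, on each open subinterval, the Itô formula to $M_t=g(t,I_t^{(1)})$. Because the driver $(D_t)$ is sample-continuous and the interpolating coefficients are continuous, $(I_t^{(1)})$ is itself sample-continuous, hence a \emph{continuous} semimartingale with no jump part, and Itô gives
\begin{equation*}
\d M_t=g_t(t,I_t^{(1)})\,\d t+g_z(t,I_t^{(1)})\,\d I_t^{(1)}+\tfrac{1}{2}\,g_{zz}(t,I_t^{(1)})\,\d[I^{(1)}]_t .
\end{equation*}
The exceptional set $N$ is Lebesgue-null, hence also null for $\d[I^{(1)}]$, so it contributes nothing to the two integrals and the identity extends to all of $(T_0,T_1)$.

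It then remains to compute the derivatives of $g=V/K$ by the quotient rule and re-express them through $M_t=V/K$. Differentiation under the integral sign against $\d F^{X_1\mid X_0}$ — which is exactly what the $C^{1,2}$ hypothesis licenses, and which identifies $V_t,V_z,V_{zz},K_t,K_z,K_{zz}$ with the genuine partial derivatives of $V$ and $K$ — gives
\begin{equation*}
g_t=\frac{V_t-M_tK_t}{K},\qquad g_z=\frac{V_z-M_tK_z}{K},\qquad \tfrac{1}{2}\,g_{zz}=\frac{V_{zz}-M_tK_{zz}}{2K}+\frac{M_tK_z^2-K_zV_z}{K^2},
\end{equation*}
all arguments being $(t,I_t^{(1)},X_0)$; substituting these into the displayed Itô expansion produces precisely the claimed stochastic differential equation.

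I expect the only genuine obstacle to be the bookkeeping that makes the notation $V_t,V_z,V_{zz}$ (and their $K$-analogues) legitimate, i.e.\ justifying the interchange of differentiation and the integral against $\d F^{X_1\mid X_0}$; this needs local domination of $u_t,u_z,u_{zz}$ in a neighbourhood of $(t,I_t^{(1)})$, but it is exactly what is absorbed into the hypothesis that $(t,x)\mapsto V/K$ is $C^{1,2}$. A secondary, routine point is the patching of Itô's formula across the finite set $N$, handled as above since finite sets are null for both Lebesgue measure and the quadratic variation; and one should note at the outset that the representation $M_t=g(t,I_t^{(1)})$ itself rests on the nearly-Markov property and Proposition~\ref{Bayes}, both in force here.
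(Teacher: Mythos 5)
Your proof is correct and follows the same route as the paper, whose entire proof is the single sentence that the result is a straightforward application of It\^o's lemma to $M_t=V(t,I_t^{(1)},X_0)/K(t,I_t^{(1)},X_0)$. Your quotient-rule computations of $g_t$, $g_z$, $\tfrac12 g_{zz}$ match the stated coefficients, and the additional points you raise (freezing $X_0$, patching across the finite set $N$, differentiating under the integral) are exactly the details the paper leaves implicit.
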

\begin{proof}
    This is verified by a straightforward application of Itô's formula. 
\end{proof}
\begin{rem}
The condition $(t,x) \rightarrow \frac{V(t,x,X_0)}{K(t,x,X_0)}$ is $C^2(((T_0,T_1) \setminus N) \times Im(I^{(1)})))$, where $N \subset (T_0,T_1)$ contains finitely many elements, imposes implicit integrability conditions on $(X_0,X_1)$.
\end{rem}
Since Gaussian RAPs have been studied in detail in previous sections, we specialise Proposition \ref{Ito} to this particular subclass. 
\begin{coro} \label{Itocoro1}
Under the conditions in Proposition \ref{Ito}, if the conditional probability distribution of $I_t^{(1)}$ given $(X_0,X_1)$ is $\mathcal N (g_0(t) X_0 + g_1(t) X_1 + \mu_A(t), \sigma_A^2(t))$ for all $t\in (T_0,T_1)$, we have
\begin{align}
    &\frac{V_t(t,I_t^{(1)},X_0)-M_t K_t(t,I_t^{(1)},X_0) }{K(t,I_t^{(1)},X_0)} = -\frac{(\E[X_1^3 \mid X_0, I_t^{(1)}] - M_t^3) g_1(t) \left(\frac{g_1(t)}{\sigma_A(t)}\right)'}{\sigma_A(t)} \nn \\
    & \hspace{1cm} + \Bigg(  \frac{M_t g_1(t) \left(\frac{g_1(t)}{\sigma_A(t)}\right)' + (I_t^{(1)} - g_0(t) X_0- \mu_A(t))\left( \left(\frac{g_1(t)}{\sigma_A(t)}\right)' + g_1(t)\left(\frac{1}{\sigma_A(t)}\right)'\right) }{\sigma_A(t)}  \nn \\
     & \hspace{6cm}- \frac{(X_0 g_0'(t) + \mu_A'(t))g_1(t)  }{\sigma_A^2(t)}  \Bigg) \times \var [ X_1 \mid X_0, I_t^{(1)} ],\\
     &\frac{V_z(t,I_t^{(1)},X_0)-M_t K_z(t,I_t^{(1)},X_0) }{K(t,I_t^{(1)},X_0)} = \frac{g_1(t)}{\sigma_A^2(t)} \var [ X_1 \mid X_0, I_t^{(1)} ], \\
    &\frac{M_t K_z^2(t,I_t^{(1)},X_0) -K_z(t,I_t^{(1)},X_0)V_z(t,I_t^{(1)},X_0)}{K^2(t,I_t^{(1)},X_0)}  \nn \\
    & \hspace{4cm}= \frac{I_t^{(1)} -g_0(t)X_0 - \mu_A(t)- g_1(t)M_t }{\sigma_A^4(t)} g_1(t) \var [ X_1 \mid X_0, I_t^{(1)} ],\\
    & \frac{V_{z z}(t,I_t^{(1)},X_0)-M_t K_{z z}(t,I_t^{(1)},X_0) }{2K(t,I_t^{(1)},X_0)} = \frac{\E[X_1^3 \mid X_0, I_t] - M_t^3}{2\sigma_A^4(t)}  g_1^2(t) \nn \\
    &\hspace{4cm} - \frac{2(I_t^{(1)} -g_0(t)X_0  - \mu_A(t)) + g_1(t)M_t} {2\sigma_A^4(t)} g_1(t)\var [ X_1 \mid X_0, I_t^{(1)} ].
\end{align}
\end{coro}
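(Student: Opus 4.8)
Since the corollary only asks us to identify, under the Gaussian assumption, the coefficients in the SDE of Proposition~\ref{Ito}, the plan is to substitute the explicit Gaussian conditional density into the definitions of $K,K_t,K_z,K_{zz}$ and $V,V_t,V_z,V_{zz}$ and then recognise every resulting ratio as a conditional moment of $X_1$ given $(X_0,I_t^{(1)})$. Under the stated hypothesis $u(t,z,X_0,y)=f^{I_t^{(1)}\|X_0,X_1=y}(z)$ is the $\mathcal N\big(g_0(t)X_0+g_1(t)y+\mu_A(t),\sigma_A^2(t)\big)$ density, so with $w:=z-g_0(t)X_0-g_1(t)y-\mu_A(t)$ one has
\[
u=\frac{1}{\sqrt{2\pi}\,\sigma_A(t)}\exp\!\left(-\frac{w^2}{2\sigma_A^2(t)}\right),\qquad u_z=-\frac{w}{\sigma_A^2(t)}\,u,\qquad u_{zz}=\left(\frac{w^2}{\sigma_A^4(t)}-\frac{1}{\sigma_A^2(t)}\right)u,
\]
and, differentiating $\log u$ in $t$ (where $g_0,g_1,\mu_A,\sigma_A$ all vary),
\[
u_t=\left(-\frac{\sigma_A'(t)}{\sigma_A(t)}+\frac{w\,\big(g_0'(t)X_0+g_1'(t)y+\mu_A'(t)\big)}{\sigma_A^2(t)}+\frac{\sigma_A'(t)\,w^2}{\sigma_A^3(t)}\right)u .
\]
In each case $u_z/u$, $u_{zz}/u$, $u_t/u$ is a polynomial in $y$ — of degree one for $u_z$ and degree two for $u_{zz}$ and $u_t$ — with coefficients that are explicit functions of $t$, $z$, $X_0$ through $w$ and $\sigma_A(t)$.

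The decisive step is the identity already contained in the proof of Proposition~\ref{Bayes}, namely $\d F^{X_1\|X_0,I_t^{(1)}=z}(y)=u(t,z,X_0,y)\,\d F^{X_1\|X_0}(y)\big/K(t,z,X_0)$, which gives
\[
\frac{\int_\R h(y)\,u(t,z,X_0,y)\,\d F^{X_1\|X_0}(y)}{K(t,z,X_0)}=\E\!\left[h(X_1)\| X_0,\,I_t^{(1)}=z\right]
\]
for any integrable $h$. Taking $h(y)=1,y,y^2,y^3$ yields $V/K=M_t$ and identifies $\int_\R y^2u\,\d F^{X_1\|X_0}/K=\var[X_1\|X_0,I_t^{(1)}]+M_t^2$ and $\int_\R y^3u\,\d F^{X_1\|X_0}/K=\E[X_1^3\|X_0,I_t^{(1)}]$. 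Hence, substituting the three polynomial expressions above into $K,K_t,K_z,K_{zz},V,V_t,V_z,V_{zz}$, dividing by $K$, and forming the combinations appearing in Proposition~\ref{Ito} turns each of the four quantities into a linear combination of $1$, $M_t$, $\var[X_1\|X_0,I_t^{(1)}]$ and $\E[X_1^3\|X_0,I_t^{(1)}]$, whose coefficients are built from $g_0,g_1,\mu_A,\sigma_A$ and their first derivatives. For the $\d I^{(1)}_t$ coefficient this is immediate: from the degree-one polynomial $u_z/u$ one gets $\frac{V_z-M_tK_z}{K}=\frac{g_1(t)}{\sigma_A^2(t)}\big(\E[X_1^2\|X_0,I_t^{(1)}]-M_t^2\big)$, and the two $\d[I^{(1)}]_t$ terms follow by the same collection after using $\E[X_1^3\|X_0,I_t^{(1)}]-M_t\E[X_1^2\|X_0,I_t^{(1)}]=\big(\E[X_1^3\|X_0,I_t^{(1)}]-M_t^3\big)-M_t\var[X_1\|X_0,I_t^{(1)}]$ to bring the expression into the displayed form.

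I expect the $\d t$ coefficient $\frac{V_t-M_tK_t}{K}$ to be the only genuinely delicate computation. Since $u_t/u$ is produced by the product and quotient rules applied to the Gaussian exponent, it contributes $\sigma_A'/\sigma_A$, $g_0'$, $g_1'$ and $\mu_A'$ in several combinations; after the $h(y)=1$ part cancels against $M_tK_t$, the surviving $y$- and $y^2$-terms must be regrouped so as to recognise the derivatives $(g_1/\sigma_A)'$ and $(1/\sigma_A)'$ appearing in the statement — using, e.g., $g_1'(t)-g_1(t)\sigma_A'(t)/\sigma_A(t)=\sigma_A(t)(g_1/\sigma_A)'(t)$ — while keeping straight which monomial in $y$ feeds $\var[X_1\|X_0,I_t^{(1)}]$ and which feeds $\E[X_1^3\|X_0,I_t^{(1)}]$. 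Everything else is routine Gaussian calculus together with the moment substitution above.
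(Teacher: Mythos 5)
Your proposal is correct and follows essentially the same route as the paper: the paper likewise computes $V_t,K_t,V_z,K_z,V_{zz},K_{zz}$ by differentiating the Gaussian conditional density (writing $Z_t=I_t^{(1)}-g_0(t)X_0-\mu_A(t)$ and $J_t=X_0g_0'(t)+\mu_A'(t)$), expresses the results through $K$, $V$ and the conditional second and third moments, and then forms the displayed combinations. Your reduction of each ratio to posterior moments via the Bayes identity, together with the regrouping $\E[X_1^3\|\cdot]-M_t\E[X_1^2\|\cdot]=(\E[X_1^3\|\cdot]-M_t^3)-M_t\var[X_1\|\cdot]$ and the identity $g_1'-g_1\sigma_A'/\sigma_A=\sigma_A(g_1/\sigma_A)'$, is exactly what is needed, and the sketched $\d t$ computation does close as you predict.
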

\begin{proof}
Let $Z_t:= I_t^{(1)} -g_0(t)X_0  - \mu_A(t)$ and $J_t:= X_0\, g_0'(t) + \mu_A'(t)$. The following computations of the partial derivatives $V_t, K_t, V_z,V_{zz}, K_{zz}$ in the case where $u(t,z,X_0,y)= (1/\sqrt{2\pi \sigma_A^2(t)}) \\ \exp [-(z-g_0(t) X_0 - g_1(t) y - \mu_A(t))^2/(2\sigma^2_A(t))]$ lead to the result:
\begin{enumerate}
    \item \begin{align}
        &V_t(t,I_t^{(1)},X_0) = \frac{g_1(t) \sigma_A'(t) - g_1'(t) \sigma_A(t) }{ \sigma_A^3(t)} g_1(t) K (t,I_t^{(1)},X_0) \E[X_1^3 \mid X_0, I_t^{(1)}] \nn \\
        &\hspace{5cm} + \frac{Z_t J_t\sigma_A(t)  + \sigma_A'(t)(Z_t^2 - \sigma_A^2(t) )}{\sigma_A^3(t)} V(t,I_t^{(1)},X_0) \nn \\
        &\quad - \frac{g_1(t) (J_t \sigma_A(t) + 2 Z_t \sigma_A'(t)) - Z_t \sigma_A(t) g_1'(t) }{\sigma_A^3(t)} K(t,I_t^{(1)},X_0) \E[X_1^2 \mid X_0, I_t^{(1)}],
    \end{align}
    \item \begin{align}
        K_t(t,I_t^{(1)},X_0) &= \frac{g_1(t) \sigma_A'(t) - g_1'(t) \sigma_A(t) }{ \sigma_A^3(t)} g_1(t) K (t,I_t^{(1)},X_0) \E[X_1^2 \mid X_0, I_t^{(1)}] \nn \\
        &\quad + \frac{Z_t J_t\sigma_A(t)  + \sigma_A'(t)(Z_t^2 - \sigma_A^2(t) )}{\sigma_A^3(t)} K(t,I_t^{(1)},X_0) \nn \\
        &\quad - \frac{g_1(t) (J_t \sigma_A(t) + 2 Z_t \sigma_A'(t)) - Z_t \sigma_A(t) g_1'(t) }{\sigma_A^3(t)} K(t,I_t^{(1)},X_0) M_t,
    \end{align}
    \item \begin{equation}
        V_z(t,I_t^{(1)},X_0) = \frac{g_1(t) \E[X_1^2 \mid X_0, I_t^{(1)}] K (t,I_t^{(1)},X_0)  - Z_t V (t,I_t^{(1)},X_0)} {\sigma_A^2(t)},
        \end{equation}
    \item \begin{equation}
        K_z(t,I_t^{(1)},X_0) = \frac{g_1(t) V (t,I_t^{(1)},X_0)  - Z_t K (t,I_t^{(1)},X_0)} {\sigma_A^2(t)},
        \end{equation}
    \item \begin{align}
        V_{zz}(t,I_t^{(1)},X_0) &= \frac{g_1^2(t) E[X_1^3 \mid X_0, I_t^{(1)}] - 2 Z_t g_1(t) \E[X_1^2 \mid X_0, I_t^{(1)}]}{\sigma_A^4(t)} K (t,I_t^{(1)},X_0) \nn \\
        &\quad + \frac{Z_t^2 - \sigma_A^2(t)}{\sigma_A^4(t)} V(t,I_t^{(1)},X_0),
    \end{align}
    \item \begin{align}
        K_{zz}(t,I_t^{(1)},X_0) &= \frac{g_1^2(t) E[X_1^2 \mid X_0, I_t^{(1)}]K(t,I_t^{(1)},X_0) - 2 Z_t g_1(t) V(t,I_t^{(1)},X_0)}{\sigma_A^4(t)} \nn \\
        &\quad \times K (t,I_t^{(1)},X_0) + \frac{Z_t^2 - \sigma_A^2(t)}{\sigma_A^4(t)} K(t,I_t^{(1)},X_0).
    \end{align}
\end{enumerate}
\end{proof}
Keeping the notation $Z_t= I_t^{(1)} -g_0(t)X_0  - \mu_A(t)$ and $J_t= X_0\, g_0'(t) + \mu_A'(t)$ introduced in the proof, and introducing $U_t= \E[X_1^3 \mid X_0, I_t] - M_t^3$, the SDE satisfied by $(M_t)$ can be rewritten as
\begin{align}
    \rd M_t =& U_t\left(\frac{g_1^2(t)}{2\sigma_A^4(t)} \rd [I^{(1)}]_t - \frac{ g_1(t) \left(\frac{g_1(t)}{\sigma_A(t)}\right)'}{\sigma_A(t)} \rd t \right) + \frac{g_1(t)}{\sigma_A^2(t)} \var [ X_1 \mid X_0, I_t^{(1)} ] \nn \\
    &\times \Bigg( \Bigg(  M_t \sigma_A(t) \left(\frac{g_1(t)}{\sigma_A(t)}\right)' + Z_t \frac{\sigma_A(t)}{g_1(t)}\left( \left(\frac{g_1(t)}{\sigma_A(t)}\right)' + g_1(t)\left(\frac{1}{\sigma_A(t)}\right)'\right)  - J_t  \Bigg) \rd t \nn \\
    & \hspace{5cm}- \frac{3g_1(t)M_t }{2\sigma_A^2(t)} \rd [I^{(1)}]_t +\rd I^{(1)}_t \Bigg).
\end{align}
If, furthermore, $(I_t^{(1)})$ is a standard RAP (see Definition \ref{standardRAP}), its driver $(D_t)$ is Gauss-Markov with $K_D(x,y)=H_1(\min(x,y))H_2(\max(x,y))$, where $H_1$ and $H_2$ are continuous functions on $[T_0,T_1]$ such that $H_1/H_2$ is positive and non-decreasing on $[T_0,T_1]$. Then, as shown in \cite{Kassis2},
\begin{equation}
    [I^{(1)}]_t = [D]_t=\int_{T_0}^t H_2(s) \rd H_1(s) - \int_{T_0}^t H_1(s) \rd H_2(s),
\end{equation}
where the right-hand side is interpreted as a difference of Riemann-Stieltjes integrals. The right-hand side exists since
\begin{equation}
    \int_{T_0}^t H_2(s) \rd H_1(s) - \int_{T_0}^t H_1(s) \rd H_2(s) =\int_{T_0}^t H_2^2(s) \rd \left(\frac{H_1}{H_2}\right)(s),
\end{equation}
and $H_1/H_2$ is continuous and monotone, so of bounded variation and differentiable almost everywhere. Recalling that in the standard RAP case we have 
\[
g_1(x) = \frac{H_1(x)H_2(T_0) - H_1(T_0)H_2(x)}{H_1(T_1)H_2(T_0) - H_1(T_0) H_2(T_1)},
\]
$A_1(x)=g_1(x)H_2(T_1)$, and $ A_2(x)= (H_1(T_1)/H_2(T_1))H_2(x) - H_1(x)$, $\sigma_A^2(x)=A_1(x)A_2(x)$, we obtain the following expression for the SDE of the martingale $(M_t)_{t\in[T_0,T_1]}$.
\begin{coro} \label{Itocoro2}
Under the conditions of Proposition \ref{Ito}, if $(I_t^{(1)})$ is a standard $X$-RAP with driver covariance $K_D(x,y)=H_1(\min(x,y))H_2(\max(x,y))$, then 
\begin{dmath*}
    \rd M_t= \frac{\var [ X_1 \mid X_0, I_t^{(1)} ]}{H_1(T_1)H_2(t)-H_1(t)H_2(T_1)} 
    \times \left( \left( \frac{Z_t ( H_1'(t)H_2(T_1)  - H_1(T_1) H_2'(t)) - M_t ( H_1'(t)H_2(t)  - H_1(t) H_2'(t))}{H_1(T_1)H_2(t) - H_1(t) H_2(T_1) }  - J_t \right) \rd t + \rd I^{(1)}_t \right),
\end{dmath*}
where $Z_t= I_t^{(1)} -g_0(t)X_0  - \mu_A(t)$ and $J_t= X_0 g_0'(t) + \mu_A'(t)$.
\end{coro}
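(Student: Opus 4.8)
The plan is to start from the ``rewritten'' Gaussian SDE for the one-arc FAM obtained above, namely the one carrying the term $U_t=\E[X_1^3\|X_0,I_t]-M_t^3$ together with the $\var[X_1\|X_0,I_t^{(1)}]$ block and expressed through $g_0,g_1,\sigma_A,\mu_A,Z_t,J_t$ and $[I^{(1)}]_t$, and to substitute the data that pin down a standard RAP. Write $\phi(t):=H_1(T_1)H_2(t)-H_1(t)H_2(T_1)$, $\psi(t):=H_1(t)H_2(T_0)-H_1(T_0)H_2(t)$, and the constant $c:=H_1(T_1)H_2(T_0)-H_1(T_0)H_2(T_1)$. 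By Definition \ref{standardRAP} and Corollary \ref{coroexplicit} one has $g_1=\psi/c$, $A_1=g_1H_2(T_1)$ and $A_2=(H_1(T_1)/H_2(T_1))H_2-H_1$, so $\sigma_A^2=A_1A_2=g_1\phi$; and by the proposition above computing $[D]_t$, $\d[I^{(1)}]_t=\d[D]_t=W(t)\,\d t$ where $W:=H_1'H_2-H_1H_2'$ (valid off the finite exceptional set already allowed in Proposition \ref{Ito}).

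The entire computation then hinges on one Wronskian-type identity. Because $\psi$ and $\phi$ are fixed linear combinations of $H_1$ and $H_2$, expanding $\psi'\phi-\psi\phi'$ cancels the $H_1H_1$ and $H_2H_2$ contributions and leaves $\psi'\phi-\psi\phi'=c(H_1'H_2-H_1H_2')=cW$, that is, $g_1'\phi-g_1\phi'=W$. Differentiating $(g_1/\sigma_A)^2=\psi/(c\phi)$ and using this identity yields the compact relation $(g_1/\sigma_A)'=W/(2\phi\sigma_A)$. This is precisely what annihilates the third-moment term: with $\sigma_A^2=g_1\phi$ and $\d[I^{(1)}]_t=W\,\d t$,
\begin{equation*}
\frac{g_1^2}{2\sigma_A^4}\,\d[I^{(1)}]_t-\frac{g_1\,(g_1/\sigma_A)'}{\sigma_A}\,\d t=\frac{W}{2\phi^2}\,\d t-\frac{W}{2\phi^2}\,\d t=0,
\end{equation*}
so the $U_t$ contribution disappears identically, which is the conceptually important step.

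It remains to reduce the surviving block. Its prefactor simplifies to $g_1/\sigma_A^2=1/\phi$, matching $\var[X_1\|X_0,I_t^{(1)}]/\phi$ in the claim, and one must check that
\begin{equation*}
M_t\sigma_A\Big(\tfrac{g_1}{\sigma_A}\Big)'+Z_t\frac{\sigma_A}{g_1}\Big(\big(\tfrac{g_1}{\sigma_A}\big)'+g_1\big(\tfrac{1}{\sigma_A}\big)'\Big)-J_t-\frac{3g_1M_t}{2\sigma_A^2}W=\frac{-Z_t\phi'-M_tW}{\phi}-J_t.
\end{equation*}
Matching the deterministic coefficients of $M_t$ and of $Z_t$ separately, this splits into two scalar identities: the $M_t$-coefficient follows from $\sigma_A(g_1/\sigma_A)'=W/(2\phi)$ and $3g_1W/(2\sigma_A^2)=3W/(2\phi)$, while the $Z_t$-coefficient, after inserting $(1/\sigma_A)'=-\sigma_A'/\sigma_A^2$ and $2\sigma_A\sigma_A'=g_1'\phi+g_1\phi'$, reduces once more to $g_1'\phi-g_1\phi'=W$. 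Reading off $-\phi'(t)=H_1'(t)H_2(T_1)-H_1(T_1)H_2'(t)$ and $W(t)=H_1'(t)H_2(t)-H_1(t)H_2'(t)$ then produces the stated SDE.

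I expect the only genuine obstacle to be the bookkeeping — keeping all the $\sigma_A$, $g_1$, $(g_1/\sigma_A)'$ and $(1/\sigma_A)'$ factors straight and checking that they all funnel through $g_1'\phi-g_1\phi'=W$. The single structural point, the one place where the ``standard'' construction is actually used, is the vanishing of the $U_t$ term: it is the intertwining from Corollary \ref{coroexplicit} between the interpolating coefficient $g_1$ and the covariance factors $H_1,H_2$ that produces $(g_1/\sigma_A)'=W/(2\phi\sigma_A)$, turning the general third-moment dynamics into an SDE governed solely by the conditional variance.
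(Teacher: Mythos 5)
Your proposal is correct and follows essentially the same route as the paper: substitute the standard-RAP data $\sigma_A^2=g_1\phi$, $\d[I^{(1)}]_t=(H_1'H_2-H_1H_2')\d t$ into the rewritten Gaussian SDE, check that the $U_t$ coefficient vanishes, and reduce the variance block — these are exactly the two identities the paper's proof asserts. Your organization of both checks around the single Wronskian identity $g_1'\phi-g_1\phi'=H_1'H_2-H_1H_2'$ is a clean way to supply the verification that the paper leaves implicit.
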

\begin{proof}
The result follows from the following calculations:
\begin{align}
        &1.\quad\frac{g_1^2(t)(H_1'(t)H_2(t)  - H_1(t) H_2'(t))}{2\sigma_A^4(t)}  - \frac{ g_1(t) \left(\frac{g_1(t)}{\sigma_A(t)}\right)'}{\sigma_A(t)}=0,\\
        &2.\quad M_t \sigma_A(t) \left(\frac{g_1(t)}{\sigma_A(t)}\right)' + Z_t \frac{\sigma_A(t)}{g_1(t)}\left( \left(\frac{g_1(t)}{\sigma_A(t)}\right)' + g_1(t)\left(\frac{1}{\sigma_A(t)}\right)'\right)\nn \\
        &\hspace{8cm} - \frac{3g_1(t)M_t (H_1'(t)H_2(t)  - H_1(t) H_2'(t))}{2\sigma_A^2(t)} \nn \\
        &\quad\,\,\,=\frac{Z_t ( H_1'(t)H_2(T_1)  - H_1(T_1) H_2'(t)) - M_t ( H_1'(t)H_2(t)  - H_1(t) H_2'(t))}{H_1(T_1)H_2(t) - H_1(t) H_2(T_1) }.
    \end{align}
\end{proof}
Introducing the notation 
\begin{dmath} \label{NSDE}
    \rd N_t{:=} \left( \frac{Z_t ( H_1'(t)H_2(T_1)  - H_1(T_1) H_2'(t)) - M_t ( H_1'(t)H_2(t)  - H_1(t) H_2'(t))}{H_1(T_1)H_2(t) - H_1(t) H_2(T_1) }  - J_t \right) \rd t + \rd I^{(1)}_t,
\end{dmath}
we can then write
\begin{equation}
    M_t= X_0 + \int_{T_0}^t \frac{\var [ X_1 \mid X_0, I_t^{(1)} ]}{H_1(T_1)H_2(t)-H_1(t)H_2(T_1)} \rd N_t.
\end{equation}
As shown below, the process $(N_t)_{t\in [T_0,T_1]}$, defined by the SDE (\ref{NSDE}) and initial condition $N_{T_0}=0$, is a martingale, and can be used to construct a \emph{standard Brownian motion on $[T_0,T_1]$}.
\begin{defn}
Let $I \subseteq \R^+$ be an interval or the real positive line. A stochastic process $(Y_t)_{t\in I}$ is said to be standard Brownian motion on an interval $[a,b] \subseteq I$ if there exists a standard Brownian motion $(B_t)_{t \geqslant 0}$ such that $Y_t\eqas B_{t-a}$ for all $t\in [a,b]$.
\end{defn}
\begin{prop} \label{propinnovations}
Under the conditions in Proposition \ref{Ito}, the process \\ $(W_t)_{t \in [T_0,T_1]}$ defined by 
\begin{equation} \label{innovations}
    W_t= \int_{T_0}^t \frac{1}{\sqrt{H_1'(s)H_2(s) - H_1(s) H_2'(s)}} \rd N_s
\end{equation}
is an $(\F_t^I)$-adapted standard Brownian motion on $[T_0,T_1]$.
\end{prop}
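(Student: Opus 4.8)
The plan is to recognise $(W_t)$ as the innovations process of the filtering problem solved by the FAM and to finish with L\'evy's characterisation of Brownian motion. I would establish, in turn, that $(N_t)$ and hence $(W_t)$ is an $(\F^I_t)$-adapted continuous local martingale started at $0$; that its bracket is deterministic and, with the normalising factor of \eqref{innovations}, equals $t-T_0$ on $[T_0,T_1]$; that $(W_t)$ is in fact a genuine martingale; and then apply L\'evy's theorem.

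Adaptedness is immediate: by \eqref{NSDE} the process $N_t$ is a combination, with deterministic coefficients built from $H_1,H_2,g_0,\mu_A$, of $I_t^{(1)}$, of $M_s$ for $s\leqslant t$, and of $X_0=I_{T_0}^{(1)}$, all $\F^I_t$-measurable, and $W$ inherits this. For the local-martingale property I would not argue directly from the filtering equations but use the FAM dynamics already at hand: the display preceding the proposition reads $M_t=X_0+\int_{T_0}^t h(s)\d N_s$ with $h(s)=\var[X_1\| X_0,I_s^{(1)}]/(H_1(T_1)H_2(s)-H_1(s)H_2(T_1))$, and $h$ is a continuous, strictly positive, $(\F^I_t)$-adapted process on $[T_0,T_1)$: the numerator is a conditional variance, positive almost surely except in the trivial coupling case $\mu_0=\mu_1$, and the denominator does not vanish on $[T_0,T_1)$ (at $s=T_0$ it is the nonzero normalising constant of $g_1$, and on $(T_0,T_1)$ it is proportional to $A_2(s)$, hence nonzero since $\sigma_A^2=A_1A_2>0$ there). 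Since $(D_t)$ is, by hypothesis, a continuous semimartingale and $I_t^{(1)}-D_t$ as well as $N_t-I_t^{(1)}$ are absolutely continuous (the latter by \eqref{NSDE}), $(N_t)$ is a continuous semimartingale; inverting the displayed identity gives $N_t=\int_{T_0}^t h(s)^{-1}\d M_s$, and since $\int_{T_0}^t h(s)^{-2}\d[M]_s=\int_{T_0}^t\d[N]_s=[N]_t<\infty$ while $(M_t)$ is a true martingale, $(N_t)$, and hence $(W_t)$, is a continuous $(\F^I_t)$-local martingale with $W_{T_0}=0$.

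The bracket computation uses the same finite-variation remark: $[N]_t=[I^{(1)}]_t=[D]_t$, and the preceding Proposition gives $[D]_t=\int_{T_0}^t H_2(s)\d H_1(s)-\int_{T_0}^t H_1(s)\d H_2(s)=\int_{T_0}^t(H_1'(s)H_2(s)-H_1(s)H_2'(s))\d s$, using differentiability of $H_1,H_2$, which is already in force since the drift in \eqref{NSDE} involves $H_1',H_2'$. Thus $[N]_t$ is deterministic, continuous and strictly increasing, and the normalising factor in \eqref{innovations} is precisely the one for which $[W]_t=t-T_0$ for every $t\in[T_0,T_1]$. Being deterministic and bounded on the closed interval, $[W]$ forces $(W_t)$ to be not merely a local martingale but a uniformly integrable, square-integrable true martingale on $[T_0,T_1]$.

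Finally, $(W_t)_{t\in[T_0,T_1]}$ is a continuous $(\F^I_t)$-martingale with $W_{T_0}=0$ and $[W]_t=t-T_0$; putting $B_u:=W_{u+T_0}$ for $u\in[0,T_1-T_0]$ and extending it by an independent Brownian motion to a process on $[0,\infty)$, $(B_u)_{u\geqslant 0}$ is a continuous local martingale with $B_0=0$ and $[B]_u=u$, hence a standard Brownian motion by L\'evy's characterisation, so $(W_t)$ is a standard Brownian motion on $[T_0,T_1]$, adapted to $(\F^I_t)$, in the sense of the definition above. I expect the main obstacle to be the right endpoint $T_1$: there $h$ degenerates as $0/0$, since both $\var[X_1\| X_0,I_s^{(1)}]$ and $H_1(T_1)H_2(s)-H_1(s)H_2(T_1)$ vanish as $s\uparrow T_1$, so the manipulations of the second paragraph are clean only on $[T_0,T_1)$; the passage to $t=T_1$ should be handled through the boundedness of $[W]$ on the closed interval together with the almost-sure continuity of $M$, $I^{(1)}$ and $N$ at $T_1$, the latter resting on sample-continuity of the driver. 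A secondary point I would make fully rigorous is the identity $[N]_t=[D]_t$ — that the drift which \eqref{NSDE} subtracts from $I^{(1)}$ is genuinely of finite variation and leaves the bracket untouched — which is exactly where having the explicit drift, rather than an abstract optional-projection argument, is the convenient route.
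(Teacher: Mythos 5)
Your proof reaches the right conclusion but takes a genuinely different route at the decisive step. The paper proves directly that $(N_t)$ is a true $(\F^I_t)$-martingale: it writes $N_t$ out explicitly, computes $\E[N_t\|\F^I_s]$ term by term using Proposition \ref{relations} for $\E[I^{(1)}_u\|\F^I_s]$, and verifies two integral identities (such as $\frac{1}{A_2(s)}\int_s^t A_2(u)h_1(u)/h_3(u)\rd u = 1-A_2(t)/A_2(s)$) that make the drift cancel. You instead invert the already-derived representation $M_t=X_0+\int_{T_0}^t h(s)\rd N_s$ to write $N=\int h^{-1}\rd M$ and let $N$ inherit the (local) martingale property from $M$; this is not circular, since that representation is pathwise It\^o calculus and presupposes nothing about $N$. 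Your route is shorter and makes the innovations structure transparent; the paper's is computational but needs no nondegeneracy hypothesis. The bracket computation and the appeal to L\'evy are essentially the same in both, and your attention to the endpoint $t=T_1$ and to why the drift has finite variation is a genuine improvement in care over the paper.

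The one genuine gap is in the inversion. It requires $h(s)=\var[X_1\|X_0,I^{(1)}_s]/h_3(s)$ to be nonzero $\rd\P\otimes\rd s$-a.e., and you dismiss the failure of this as ``the trivial coupling case $\mu_0=\mu_1$''. That is not accurate. The conditional law of $X_1$ given $(X_0,I^{(1)}_s)$ is equivalent to the conditional law of $X_1$ given $X_0$ (the Gaussian density of the noise is strictly positive), so $\var[X_1\|X_0,I^{(1)}_s]=0$ exactly on the event where $X_1$ is determined by $X_0$, i.e.\ where the martingale coupling is locally deterministic, forcing $X_1=X_0$ there. This can happen on a set of positive probability even when $\mu_0\neq\mu_1$: take $X_0\sim \textrm{Uni}(\{0,1\})$ with $X_1=0$ on $\{X_0=0\}$ and $X_1\in\{0,2\}$ equiprobable on $\{X_0=1\}$. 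On such an event $M$ is constant while $N$ is not, so $N$ cannot be recovered from $M$ and your argument does not establish its martingale property there; you must either assume a.s.\ nondegenerate conditional laws or fall back on the paper's direct computation, which covers this case. A secondary point: with the integrand $1/h_2(s)$ literally as in Eq.~(\ref{innovations}), where $h_2=H_1'H_2-H_1H_2'$, one gets $[W]_t=\int_{T_0}^t h_2(s)^{-2}\rd[N]_s=\int_{T_0}^t h_2(s)^{-1}\rd s$ rather than $t-T_0$; your claim that this normalisation is ``precisely the one'' giving $[W]_t=t-T_0$ is correct only for the integrand $1/\sqrt{h_2(s)}$, which is what Corollary \ref{FAMSDEfinal} implicitly uses, so you should make the square root explicit rather than inherit the discrepancy.
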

\begin{proof}
We begin with showing that $(N_t)_{t \in [T_0,T_1]}$ is an $(\F_t^I)$-martingale. We introduce the following notation: $h_1(t):= H_1'(t)H_2(T_1)  - H_1(T_1) H_2'(t)$, $h_2(t):= H_1'(t)H_2(t) - H_1(t) H_2'(t)$, $h_3(t):= H_1(T_1)H_2(t) - H_1(t) H_2(T_1)$, and
$$
S(X_0,T_0,t):= \int_{T_0}^t \frac{(g_0(u) X_0 + \mu_A(u))h_1(u)}{h_3(u)} \rd u. 
$$
Then, it follows that
\begin{equation}
    N_t = \int_{T_0}^t \frac{I^{(1)}_u h_1(u) - M_u h_2(u)}{h_3(u)} \rd u- S(X_0,T_0,t) - g_0(t)X_0 - \mu_A(t) + I^{(1)}_t.
\end{equation}
Let $(s,t) \in [T_0,T_1] ^2$ such that $s < t$. We next show that $\E [N_t \mid \F_s^I ] = N_s$. By the linearity property of the conditional expectation, we have
\begin{align} \label{master}
    &\E [N_t \mid \F_s^I ]= \int_{T_0}^s \frac{I^{(1)}_u h_1(u) - M_u h_2(u)}{h_3(u)} \rd u + \int_{s}^t \frac{\E[I^{(1)}_u \mid \F_s^I] h_1(u) }{h_3(u)} \rd u  \nn \\
    &\quad- M_s\int_{s}^t \frac{  h_2(u)}{h_3(u)} \rd u - (S(X_0,T_0,s) + S(X_0,s,t)) -g_0(t) X_0 - \mu_A(t) + \E [ I^{(1)}_t \mid \F_s^I ].
\end{align}
By Proposition \ref{relations}, the second and the last terms in the above expression of $\E [N_t \mid \F_s^I ]$ can be expressed as follows: 
\begin{align}
    &\E[I_t^{(1)} \mid \F^I_s]=  \left(g_0(t) - \frac{A_2(t)}{A_2(s)} g_0(s) \right)  X_0 + \left(g_1(t)- \frac{A_2(t)}{A_2(s)} g_1(s)\right) M_s\nn \\
    & \hspace{8cm} + \frac{A_2(t)}{A_2(s)} I_s^{(1)}  + \mu_A(t) +\frac{A_2(t)}{A_2(s)} \mu_A(s), \\
    &\int_{s}^t \frac{\E[I^{(1)}_u \mid \F_s^I] h_1(u) }{h_3(u)} \rd u = X_0 \int_s^t \frac{g_0(u)h_1(u)}{h_3(u)} \rd u + M_s\int_s^t \frac{g_1(u)h_1(u)}{h_3(u)} \rd u  \nn \\
    & + \int_s^t \frac{\mu_A(u) h_1(u)}{h_3(u)} \rd u+   \frac{\left( I_s^{(1)} - g_1(s)M_s - g_0(s)X_0 + \mu_A(s) \right)}{A_2(s)}\int_s^t \frac{A_2(u) h_1(u)}{h_3(u)} \rd u.
\end{align}
Moreover, we notice that 
\begin{equation}
    \frac{1}{A_2(s)}\int_s^t \frac{A_2(u) h_1(u)}{h_3(u)} \rd u = \frac{H_1(t)-H_1(s) - \frac{H_1(T_1)}{H_2(T_1)} \left( H_2(t) - H_2(s) \right)}{A_2(s)}
    = 1- \frac{A_2(t)}{A_2(s)}.
\end{equation}
Hence, 
\begin{align}
    &\E[I_t^{(1)} \mid \F^I_s] + \int_{s}^t \frac{\E[I^{(1)}_u \mid \F_s^I] h_1(u) }{h_3(u)} \rd u \nn \\
    &= (g_0(t)-g_0(s) ) X_0 + (g_1(t)-g_1(s))  M_s + X_0 \int_s^t \frac{g_0(u)h_1(u)}{h_3(u)} \rd u \nn \\
    &\hspace{2.3cm} + M_s\int_s^t \frac{g_1(u)h_1(u)}{h_3(u)} \rd u + \int_s^t \frac{\mu_A(u) h_1(u)}{h_3(u)} \rd u +I_s^{(1)}  + \mu_A(s) + \mu_A(t) \nn \\
    &= (g_0(t)-g_0(s) ) X_0 + (g_1(t)-g_1(s))  M_s + S(X_0,s,t) \nn \\
    &\hspace{6cm} + M_s\int_s^t \frac{g_1(u)h_1(u)}{h_3(u)} \rd u +I_s^{(1)} + \mu_A(s) + \mu_A(t).
\end{align}
Next, we observe that
\begin{align}
    \int_s^t \frac{g_1(u)h_1(u)- h_2(u)}{h_3(u)} \rd u &= \int_s^t \frac{H_1'(u)H_2(T_0) - H_1(T_0)H_2'(u)}{H_1(T_1)H_2(T_0) - H_1(T_0) H_2(T_1)}\rd u= g_1(t)-g_1(s),
\end{align}
which allows one to write 
\begin{align} \label{work}
    &\E[I_t^{(1)} \mid \F^I_s] + \int_{s}^t \frac{\E[I^{(1)}_u \mid \F_s^I] h_1(u) }{h_3(u)} \rd u \nn \\
    &= (g_0(t)-g_0(s) ) X_0 +  M_s\int_{s}^t \frac{  h_2(u)}{h_3(u)} \rd u M_s + S(X_0,s,t) +I_s^{(1)}  + \mu_A(s) + \mu_A(t).
\end{align}
Inserting Eq. (\ref{work}) into Eq. (\ref{master}) gives: 
\begin{equation}
    \E [N_t \mid \F_s^I ] = \int_{T_0}^s \frac{I^{(1)}_u h_1(u) - M_u h_2(u)}{h_3(u)} \rd u- S(X_0,T_0,s) - g_0(s)X_0 - \mu_A(s) + I^{(1)}_s=N_s.
\end{equation}
Moreover, $\E [ \, \abs{N_t} ] < \infty$ for all $t \in [T_0,T_1]$. Hence $(N_t)_{t \in [T_0,T_1]}$ is an $(\F_t^I)$-martingale. We now show that $(W_t)$ is an $(\F_t^I)$-adapted standard Brownian motion on $[T_0,T_1]$. We compute the quadratic variation of $(W_t)$ and obtain
\begin{equation}
    [W]_t= \int_{T_0}^t \frac{1}{h_2(s)} \rd [N]_s = \int_{T_0}^t \frac{1}{h_2(s)} \rd [I^{(1)}]_s = \int_{T_0}^t \frac{h_2(s)}{h_2(s)} \rd s = t-T_0.
\end{equation}
Since $(N_t)$ is an $(\F_t^I)$-martingale and $\E[[W]_t] < \infty$, $(W_t)$ is an $(\F_t^I)$-martingale. Hence, by Lévy's characterisation theorem (Theorem 7.1 in \cite{Revuz}), $(W_t)_{t \in [T_0,T_1]}$ is an $(\F_t^I)$-adapted standard Brownian motion on $[T_0,T_1]$.
\end{proof}
The process $(W_t)_{t \in [T_0,T_1]}$ is referred to as the innovations process associated with the martingale $(M_t)$. It follows that we can write $(M_t)$ as an integral with respect its innovations process.
\begin{coro} \label{FAMSDEfinal}
Under the conditions of Proposition \ref{Ito}, it holds that
\begin{equation} \label{GMFAM}
    M_t = X_0 + \int_{T_0}^t \frac{\var [ X_1 \mid X_0, I_s^{(1)} ]\sqrt{H_1'(s)H_2(s)  - H_1(s) H_2'(s) } }{H_1(T_1)H_2(s)-H_1(s)H_2(T_1)} \rd W_s.
\end{equation}
\end{coro}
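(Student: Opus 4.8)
The plan is to derive Eq.~(\ref{GMFAM}) by a change of integrator, turning the representation of $(M_t)$ as a stochastic integral against $(N_t)$ displayed above into one against the innovations Brownian motion $(W_t)$ of Proposition~\ref{propinnovations}. Throughout, abbreviate $h_2(s):=H_1'(s)H_2(s)-H_1(s)H_2'(s)$ and $h_3(s):=H_1(T_1)H_2(s)-H_1(s)H_2(T_1)$.

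First I would take as starting point the representation
\begin{equation*}
M_t = X_0 + \int_{T_0}^t \frac{\var[X_1 \| X_0, I_s^{(1)}]}{h_3(s)}\,\d N_s, \qquad t\in[T_0,T_1],
\end{equation*}
already obtained from the SDE for $(M_t)$ in the preceding corollary, together with the initial value $M_{T_0}=\E[X_1\|X_0]=X_0$, which is legitimate because $\pi^X$ is a martingale coupling and $I_{T_0}^{(1)}=X_0$. Next I would invoke Proposition~\ref{propinnovations}: normalised so that $[W]_t=t-T_0$, the innovations process satisfies $W_t=\int_{T_0}^t h_2(s)^{-1/2}\,\d N_s$ and is an $(\F_t^I)$-adapted standard Brownian motion on $[T_0,T_1]$. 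Since (as is implicit throughout, and needed already for Proposition~\ref{propinnovations} to make sense) $H_1/H_2$ is strictly increasing on $(T_0,T_1)$, we have $h_2>0$ there, so $h_2^{\pm 1/2}$ are continuous and hence locally bounded; associativity of the stochastic integral then gives $N_t=\int_{T_0}^t h_2(s)^{1/2}\,\d W_s$ (using $N_{T_0}=0$), i.e.\ $\d N_t=\sqrt{h_2(t)}\,\d W_t$. Substituting this into the representation above yields exactly Eq.~(\ref{GMFAM}) once $h_2$ and $h_3$ are written out in full.

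The part I expect to be most delicate is the behaviour at the endpoints rather than the algebra: $h_3$ vanishes at $s=T_0$ and $h_2$ may vanish at $T_0$ or $T_1$, so one must verify that $\var[X_1\|X_0,I_s^{(1)}]\sqrt{h_2(s)}/h_3(s)$ is locally square-integrable on $(T_0,T_1)$ and that the representation --- valid a priori only on compact subintervals of $(T_0,T_1)$ --- closes up on all of $[T_0,T_1]$, consistently with $M_{T_0}=X_0$ and $M_{T_1}=X_1$. This is precisely where the hypotheses inherited from Proposition~\ref{Ito} (the $C^2$ condition off a finite set, the implicit moment conditions on $(X_0,X_1)$, and the vanishing of $A_1$ or $A_2$ at the endpoints forced by continuity of $K_A$) are used; the remaining manipulations are routine bookkeeping with $h_2$, $h_3$, and the identities for $g_1$ established in the proof of the preceding corollary.
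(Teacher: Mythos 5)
Your argument is correct and is essentially the paper's: the corollary is stated there without proof, as the immediate consequence of the representation $M_t = X_0 + \int_{T_0}^t \var[X_1\|X_0,I_s^{(1)}]\,h_3(s)^{-1}\,\d N_s$ and Proposition \ref{propinnovations}, and you rightly read the innovations integrand as $h_2(s)^{-1/2}$ (consistent with the quadratic-variation computation and with the $\sqrt{h_2}$ in Eq. (\ref{GMFAM})) rather than the $h_2(s)^{-1}$ misprinted in Eq. (\ref{innovations}). One small slip in your closing aside: $h_3$ vanishes at $s=T_1$, not $s=T_0$ (at $T_0$ it equals the nonzero normalizing constant appearing in $g_1$), so the delicate endpoint is $T_1$, where the vanishing of $\var[X_1\|X_0,I_s^{(1)}]$ compensates and square-integrability of the integrand follows from It\^o's isometry as used later in the IB-MOT section.
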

In the case that $(I^{(1)}_t)$ is an $X$-randomised anticipative Brownian bridge on $[T_0,T_1]$, that is,
\begin{equation}
    I_{t}^{(1)}=  B_t - \frac{T_1-t}{T_1-T_0} (B_{T_0}-X_0) - \frac{t-T_0}{T_1-T_0} (B_{T_1}-X_1),
\end{equation}
where $(B_t)_{t\geqslant 0}$ is a standard Brownian motion, the expressions become significantly simpler.
\begin{coro} \label{Ito2}
If $(I^{(1)}_t)_{t \in [T_0,T_1]}$ is an $X$-randomised anticipative Brownian bridge on $[T_0,T_1]$, the following holds:
\begin{enumerate}
    \item The process $(M_t)_{t\in[T_0,T_1]}$ satisfies the SDE
\begin{equation}
     M_t = X_0+ \int_{T_0}^t \frac{\var [ X_1 \mid X_0, I_t^{(1)} ]}{T_1-t} \left(  \frac{  I_t^{(1)} - M_t  }{T_1-t} \rd t+  \rd I^{(1)}_t \right).
\end{equation} 
    \item The process $(W_t)_{t \in [T_0,T_1]}$, defined by 
\begin{equation}
     W_t :=   \int_{T_0}^t\frac{ I_u^{(1)} - M_u  }{T_1-u} \rd u +   I^{(1)}_t - X_0,
\end{equation}
is an $(\F_t^I)$-adapted standard Brownian motion on $[T_0,T_1]$.
\end{enumerate}
\end{coro}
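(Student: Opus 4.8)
The plan is to treat the $X$-randomized anticipative Brownian bridge as a particular standard $X$-RAP and to specialize Corollary~\ref{FAMSDEfinal}, Proposition~\ref{propinnovations} and Eq.~(\ref{NSDE}) to this case. First I would identify the relevant data: the driver here is a standard Brownian motion, so $K_D(s,t)=\min(s,t)$, i.e.\ $H_1(x)=x$ and $H_2(x)=1$, and $H_1/H_2$ is indeed positive and non-decreasing on $[T_0,T_1]$. One checks that the process genuinely satisfies Definition~\ref{standardRAP}: its noise process is the standard AP driven by Brownian motion, and with $n=1$ the signal coefficients $g_0(t)=\frac{T_1-t}{T_1-T_0}$, $g_1(t)=\frac{t-T_0}{T_1-T_0}$ agree with $f_0,f_1$ on $[T_0,T_1]$ and satisfy $g_1(T_0)=0$. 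Since Brownian motion has zero mean, $\mu_A\equiv 0$, and the quantities entering the earlier results become $Z_t=I_t^{(1)}-\frac{T_1-t}{T_1-T_0}X_0$ and $J_t=-\frac{X_0}{T_1-T_0}$, together with $h_1(t)=h_2(t)=1$ and $h_3(t)=T_1-t$ in the notation of Proposition~\ref{propinnovations}.

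For part 1, Corollary~\ref{FAMSDEfinal} with $H_1(s)=s$, $H_2(s)=1$ reduces to $M_t=X_0+\int_{T_0}^t \frac{\var[X_1\| X_0,I_s^{(1)}]}{T_1-s}\,\d W_s$, so that $\d M_t=\frac{\var[X_1\| X_0,I_t^{(1)}]}{T_1-t}\,\d W_t$. It then remains to rewrite $\d W_t$: since $h_2\equiv 1$ we have $W_t=N_t$, and specializing Eq.~(\ref{NSDE}) the drift of $\d N_t$ is $\frac{Z_t-M_t}{T_1-t}-J_t$; the term $-\frac{1}{T_1-t}\cdot\frac{T_1-t}{T_1-T_0}X_0$ arising from $Z_t$ cancels precisely against $-J_t=\frac{X_0}{T_1-T_0}$, leaving $\d N_t=\frac{I_t^{(1)}-M_t}{T_1-t}\,\d t+\d I_t^{(1)}$. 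Substituting into the previous display gives the announced SDE for $(M_t)$.

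For part 2, Proposition~\ref{propinnovations} already asserts that $W_t=\int_{T_0}^t \frac{1}{h_2(s)}\,\d N_s$ is an $(\F_t^I)$-adapted standard Brownian motion on $[T_0,T_1]$; here this is simply $W_t=N_t$. Integrating the simplified expression for $\d N_t$ obtained in part 1, with $N_{T_0}=0$ and $I_{T_0}^{(1)}=X_0$, yields $W_t=\int_{T_0}^t\frac{I_u^{(1)}-M_u}{T_1-u}\,\d u+I_t^{(1)}-X_0$, as claimed. The argument is essentially bookkeeping; the only two points needing a little care are checking that the randomized anticipative Brownian bridge really falls under Definition~\ref{standardRAP}, so that Corollary~\ref{FAMSDEfinal} and Proposition~\ref{propinnovations} may be invoked, and tracking the cancellation of the $X_0$-terms in the drift of $(N_t)$.
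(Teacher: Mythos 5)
Your proposal is correct and follows exactly the paper's route: the paper's proof is the one-line instruction to set $H_1(x)=x$, $H_2(x)=1$, $g_0(t)=\frac{T_1-t}{T_1-T_0}$, $g_1(t)=\frac{t-T_0}{T_1-T_0}$ in Eqs.\ (\ref{GMFAM}) and (\ref{innovations}), which is precisely your specialization of Corollary \ref{FAMSDEfinal} and Proposition \ref{propinnovations}. Your additional bookkeeping (verifying the standard-RAP conditions, computing $Z_t$, $J_t$, $h_1=h_2=1$, $h_3(t)=T_1-t$, and the cancellation of the $X_0$ terms in the drift of $N_t$) is accurate and simply makes explicit what the paper leaves to the reader.
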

\begin{proof}
It suffices to set $H_1(x)=x$, $H_2(x)=1$, $g_0(t)=(T_1-t)/(T_1-T_0)$, and $g_1(t)=(t-T_0)/(T_1-T_0)$ in Eqs. (\ref{GMFAM}) and (\ref{innovations}).
\end{proof}
When $X_0=0$, $X_1$ is centred around $0$, and $T_0=0$, the randomised anticipative Brownian bridge gives rise to the martingale developed in \cite{BHM1}, i.e., 
\begin{equation}
    M_t=  \int_{0}^t \frac{\var [ X_1 \mid  I_s^{(1)} ]}{T_1-s} \rd W_s.
\end{equation}
Using Eq. (\ref{GMFAM}), we give examples of FAMs where the underlying RAP is standard (see Definition \ref{standardRAP}).

In general, one can expect that for a given coupling $\pi^X$, $M_t=\E[X_1 \mid \F_t^I]$ has no explicit analytical expression, even in the case where $M_t=\E[X_1 \mid X_0, I_t^{(1)}]$. Nonetheless, we give an example where $M_t=\E[X_1 \mid X_0, I_t^{(1)}]$ and its SDE are explicit. 
\begin{ex} \label{ex418}
Let $X_0 \sim \textrm{Uni}\,(\{-1,1\})$ and $X_1 \sim \frac{1}{4}\delta_{-2}+\frac{1}{2}\delta_{0} + \frac{1}{4}\delta_{2}$ such that 
\begin{equation}
X_1 \mid X_0  = \left\{
    \begin{array}{ll}
          X_0 +1  & \mbox{with probability } \frac{1}{2},\\ \\
         X_0 - 1  & \mbox{with probability } \frac{1}{2}.
    \end{array}
\right.
\end{equation}
Clearly, $\E[X_1 \mid X_0 ] = X_0$. Let $(B_t)_{t \geqslant0}$ be a standard Brownian motion and
\begin{equation}
    I_{t}^{(1)}= B_t - \frac{T_1-t}{T_1-T_0} (B_{T_0}-X_0) - \frac{t-T_0}{T_1-T_0} (B_{T_1}-X_1).
\end{equation}
Moreover, let $\phi$ denote the density function of the Gaussian measure \\ $\mathcal N\left(0, (T_1-t)(t-T_1)/(T_1-T_0)\right)$, and recall that $g_1(x)= (t-T_0)/(T_1-T_0)$. By Proposition \ref{Bayes}, we have:
\begin{align}
    M_t&= \frac{(X_0+1)\phi(I_t^{(1)}-X_0-g_1(t))  \frac{1}{2} + (X_0-1)\phi(I_t^{(1)}-X_0+g_1(t))  \frac{1}{2}}{\phi(I_t^{(1)}-X_0-g_1(t))  \frac{1}{2} + \phi(I_t^{(1)}-X_0+g_1(t))  \frac{1}{2}}, \\
    &= X_0 + \frac{\phi(I_t^{(1)}-X_0-g_1(t)) - \phi(I_t^{(1)}-X_0+g_1(t))}{\phi(I_t^{(1)}-X_0-g_1(t)) + \phi(I_t^{(1)}-X_0+g_1(t))}, \\
    &= X_0 + \tanh \left( \frac{I_t^{(1)}-X_0}{T_1-t}\right).
\end{align}
The associated SDE is given by the integral form
\begin{equation}
    M_t= X_0+ \bigintss_{T_0}^t \frac{ \sech^2 \left( \frac{I_s-X_0}{T_1-s} \right) }{T_1-s} \rd W_s.
\end{equation}
\begin{figure}[H]
   \centering
   \begin{minipage}[t]{0.49\textwidth}
    \includegraphics[width=\textwidth]{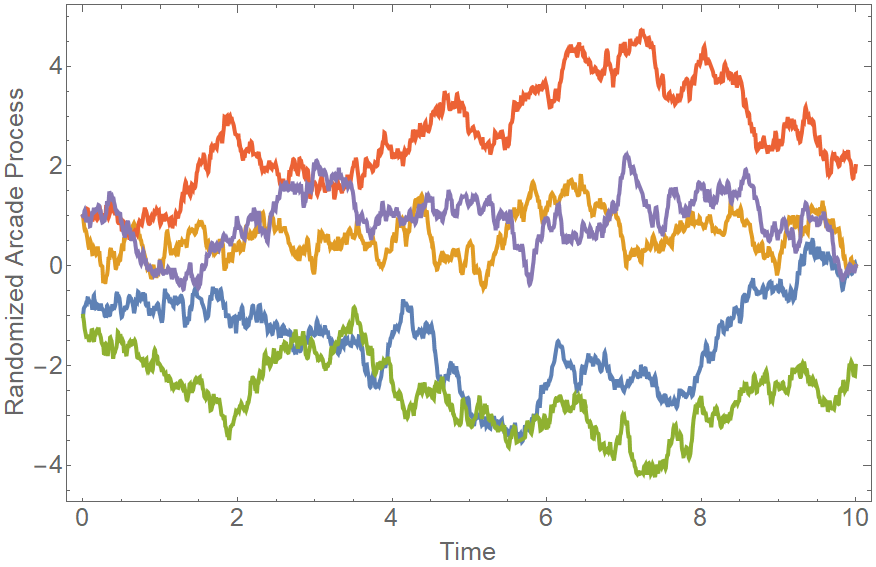}
     \caption{Paths of the $X$-RAP $(I_{t}^{(1)})$, cf., Example \ref{ex418}.}
   \end{minipage}
   \hfill
   \begin{minipage}[t]{0.49\textwidth}
     \includegraphics[width=\textwidth]{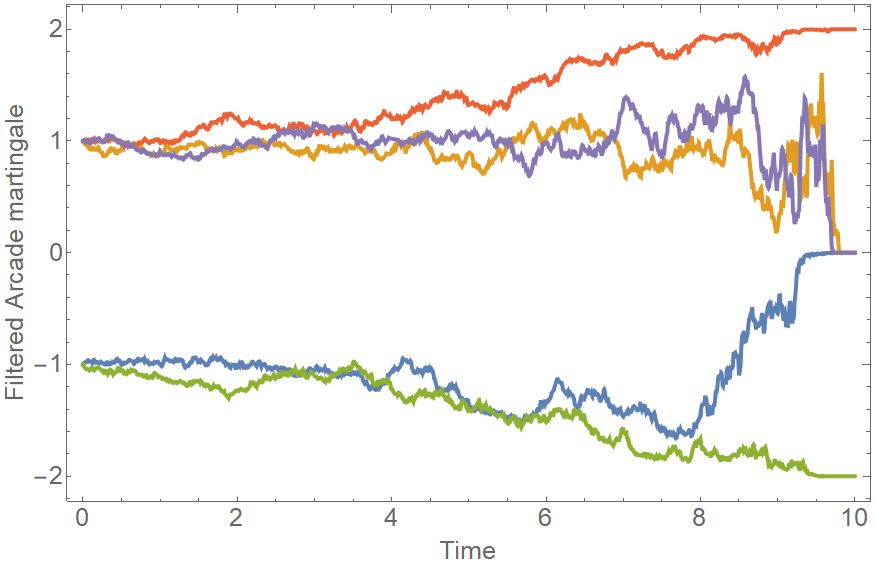}
     \caption{Paths of the associated FAM $(M_t)$, cf., Example \ref{ex418}.}
   \end{minipage}
 \end{figure}
\end{ex}
\subsection{Filtered arcade reverse martingales}
Before moving on to the n-arc FAMs, one can apply the ideas developed so far to a different set of problems. Let $X=(X_0,X_1)$ be a random vector distributed according to a coupling $\pi^X \in \Pi (\mu_0,\mu_1)$, where $(\mu_0,\mu_1) \in \mathcal P_1 (\R) \times \mathcal P_1 (\R)$. Instead of predicting $X_1$ using the information revealed by an $X$-RAP, one might want to predict $X_0$ given different scenarios of the future generated by an $X$-RAP. We illustrate this idea with a brief thought experiment. 

At the start of a farming season, a farmer might wonder how the production of his crops should be kicked off to meet the target yield by the end of the season, while taking into account the adverse weather forecast for the mid-season. If $X_0$ is the total amount produced by the end of the first day and $X_1$ is the total production at the end of the last day of the season, the farmer has a good idea about what $\pi^X$ is given his experience over many seasons. So, how should the farmer proceed? The answer is to use a filtered arcade reverse martingale (FARM).

\begin{defn}
 Given an $X$-RAP $(I_t^{(1)})$, a one-arc FARM for $X\sim \pi^X$ on $[T_0,T_1]$ is a stochastic process defined by $M_t^-=\E[X_0 \mid \mathcal G_t^I ]$, where $\mathcal G_t^I:= \sigma ( I_u^{(1)} \mid t \leqslant u \leqslant T_1)$ and $t \in [T_0,T_1]$.
\end{defn}
Unlike a FAM, a FARM is not a martingale, and is not necessarily interpolating between $X_0$ and $X_1$ either. Instead, it is a reverse martingale (also called backward martingale) with respect to the reverse filtration $({\mathcal G}_t^I)$, which follows from the tower property of conditional expectation: $\E[M_s^- \mid {\mathcal G}_t^I]=\E[\E[X_0 \mid {\mathcal G}_s^I ]\mid {\mathcal G}_t^I]=\E[X_0 \mid {\mathcal G}_t^I ]=M_t^-$ for any $(s,t) \in [T_0,T_1]^2$ such that $s\leqslant t$. A FARM only interpolates between $X_0$ and $X_1$ if $(X_0,X_1)$ is a discrete-time reverse martingale, i.e., $\E[X_0\mid X_1]=X_1$, since $M_{T_0}^- = X_0$ and $M_{T_1}^-=\E[X_0\mid X_1]$ by construction. 

To write a FARM $M_t^-$ as a function of $I_t^{(1)}$ and $X_1$, one needs another relaxation of the Markov property, as hinted at in Remark \ref{remreverse}. 
\begin{defn}
Let $\mathcal I  \subseteq \R^+$ be a real interval and $ \tau_0>\tau_1 > \ldots  \geqslant \inf \mathcal I$ such that $\tau=\{\tau_0, \tau_1, \ldots\} \subset \mathcal I$. The set $\tau$ may be finite or contain countably many elements. A stochastic process $(Y_t)_{t \in \mathcal I}$ is called reverse $\tau$-conditionally Markov if 
\begin{equation}
    \P\left[Y_s \in \cdot \mid  \mathcal G^Y_t \right] = \P\left[Y_{s} \in \cdot \mid Y_t, Y_{\tau_{m(t)}}, \ldots, Y_{\tau_0}  \right]
\end{equation}
for any $(s,t) \in \mathcal I^2$ such that $s \leqslant t$, where $\mathcal G^Y_t= \sigma ( Y_u \mid t \leqslant u \leqslant \sup \mathcal I)$ and $\tau_{m(s)} = \min\limits_{i\in \N} \{\tau_i \mid \tau_i \geqslant t\}$.
\end{defn}
Similarly to the FAM, a FARM can be expressed by
\begin{equation}
    M_t^-= \frac{\int_\R y f^{I_t^{(1)} \mid X_1,X_0=x} (I_t^{(1)})  \rd F^{X_0 \mid X_1} (x)}{\int_\R f^{I_t^{(1)} \mid X_1,X_0=x} (I_t^{(1)})  \rd F^{X_0 \mid X_1} (x)},
\end{equation}
for $t \in (T_0,T_1)$ if $(I_t^{(1)})$ is reverse $\{T_1,T_0\}$-conditionally Markov. In the previous section, we used standard RAPs to significantly simplify the SDEs that FAMs satisfy. For FARMs, the corresponding notion is the one of the \textit{reverse standard RAP}. 
\begin{defn}
A RAP $(I_t^{(n)})$ is said to be a reverse standard RAP if its noise process $(A_t^{(n)})$ is a standard AP, and if for all $x\in[T_0,T_n]$ and $j=0,\ldots,n-1$ it holds that $g_j(x) \1_{[T_{j+1},T_{n}]}(x)=0$, and $g_j(x) \1_{[T_{j},T_{j+1}]}(x)= f_j(x)\1_{[T_{j},T_{j+1}]}(x)$.
\end{defn}
Following the same procedure used for FAMs to derive their SDEs, while replacing the conditionally-Markov property by the reverse conditionally-Markov property, and standard RAPs by reverse standard RAPs, one finds that there exists a reverse standard Brownian motion $(W_t^-)_{t\in[T_0,T_1]}$ adapted to the reverse filtration $(\mathcal G_t^I)$ such that 
\begin{equation}
    M_t^- = X_0 + \int_{T_0}^t \frac{\var [ X_0 \mid I_s^{(1)}, X_1 ] \sqrt{H_1'(s)H_2(s)  - H_1(s) H_2'(s) }}{H_1(T_1)H_2(s)-H_1(s)H_2(T_1)} \rd W_s^-.
\end{equation}
\subsection{The n-arc FAM}
Let $X=(X_0,X_1,\ldots, X_n)$ be a random vector distributed according to a martingale coupling $\pi^X \in \mathcal M(\mu_0,\mu_1, \ldots, \mu_n)$, where $\mu_i \in \mathcal P_1 (\R)$ for $i=0,\ldots, n$ and $\mu_{i}\leqslant_{\mathrm{cx}}\mu_{i+1}$ for $i=0,\ldots, n-1$. Given an $X$-RAP $(I_t^{(n)})_{t \in [T_0,T_n]}$ on the partition $\{T_0,T_n\}_*$, we aim at constructing an $(\F^I_t)$-martingale $(M_t)_{t\in[T_0, T_n]}$ such that $M_{T_i}\eqas X_i$, for $i=0,\ldots, n$. A major difference in the $n$-arc case is that $(I_t^{(n)})$ must be $\{T_0,T_n\}_*$-conditionally Markov to guarantee the interpolation of $(M_t)$.
\begin{defn} \label{nFAM}
Given an $X$-RAP $(I_t^{(n)})_{t \in [T_0,T_n]}$ that is $\{T_0,T_n\}_*$-conditionally Markov, an $n$-arc FAM for $X\sim \pi^X$ is a stochastic process of the form $M_t = \E[X_n \mid \mathcal F_t^I] = \E[ X_n \mid X_0, \ldots, X_{m(t)}, I_t^{(n)} ]$ where $m(t)= \max \{i \in \N \mid T_i \leqslant t \}$.
\end{defn}
By the tower property of conditional expectation, $(M_t)_{t \in [T_0,T_n]}$ is a martingale with respect to $(\F_t^I)_{t \in [T_0,T_n]}$. Moreover, $M_{T_i}= \E[ X_n \mid X_0, \ldots, X_{i} ] = X_i $ since $\pi^X \in \mathcal M(\mu_0,\mu_1, \ldots, \mu_n)$.  Hence, $(M_t)$ is an interpolating martingale with respect to $(\F_t^I)$. We notice that, without the conditionally-Markov property, $M_{T_i}= \E[X_n \mid \mathcal F_{T_i}^I]$ is not necessarily a.s. equal to $X_i$. In other words, $X$ is not necessarily a martingale with respect to $(\F_t^I)$. More precisely, if we consider the filtration $(\F_t^X)_{t \in [T_0,T_n]}$ generated by the martingale step process $(X_t)_{t \in [T_0,T_n]}$ equal to $X_i$ on $[T_i, T_{i+1})$ for $i=0,\ldots, n-1$ and to $X_n$ on $[T_{n-1},T_n]$, we have that $\F_t^X \subseteq \F_t^I$ for all $t\in[T_0,T_n]$ by definition of a RAP, but $(\F_t^X)$ is not immersed\footnote{We say that $\left(\mathcal{F}_t\right)$ is immersed in $\left(\mathcal{G}_t\right)$ under $\mathbb{P}$ if every $\left(\mathcal{F}_t\right)$-martingale is also a $\left(\mathcal{G}_t\right)$-martingale. The immersion property ensures that enlarging the filtration (i.e., gaining more information) does not alter the martingale property of processes adapted to the smaller filtration, meaning that the additional information in $\left(\mathcal{G}_t\right)$ does not allow for "better prediction" of the future of $\left(\mathcal{F}_t\right)$-martingales.} in $(\F_t^I)$ under $\P$ in general. By restricting to RAPs that satisfy the $\{T_0,T_n\}_*$-conditionally-Markov property, $(\F_t^X)$ becomes immersed in $(\F_t^I)$ under $\P$. Notice that for $n=1$, $(\F_t^X)$ is immersed in $(\F_t^I)$ under $\P$ in general. This is why imposing the conditionally-Markov property in the definition of a one-arc FAM is not necessary.

As for the case $n=1$, in the remainder of this section, we assume that the driver $(D_t)$ of $(I_t^{(n)})$ has a density function on $[T_0,T_n]$.
\begin{prop}
Let $M_t = \E[ X_n \mid X_0, \ldots, X_{m(t)}, I_t^{(n)} ]$ be an $n$-arc FAM restricted to $(T_0,T_n)_*$. Then,
\begin{equation}
    M_t = \frac{\int_\R y f^{I_t^{(n)} \mid X_0,\ldots,X_{m(t)},X_n=y} (I_t^{(n)})    \rd F^{X_n \mid X_{0}, \ldots, X_{m(t)}}}{\int_\R f^{I_t^{(n)} \mid  X_0,\ldots,X_{m(t)},X_n=y} (I_t^{(n)})  \rd F^{X_n \mid X_{0}, \ldots, X_{m(t)}}},
\end{equation}
where $F^{X_n \mid X_{0}, \ldots, X_{m(t)}}$ is the distribution function of $X_n$ conditional on random vector $(X_{0}, \ldots, X_{m(t)})$. In particular:
\begin{enumerate}
    \item If $(X_{0}, \ldots, X_{m(t)},X_n)$ is a continuous random variable, with density function $f^{X_n \mid X_{0}, \ldots, X_{m(t)}}$, then 
    \begin{equation}
    M_t = \frac{\int_\R y f^{I_t^{(n)} \mid X_0,\ldots,X_{m(t)},X_n=y} (I_t^{(n)})   f^{X_n \mid X_{0}, \ldots, X_{m(t)}}(y) \rd y}{\int_\R f^{I_t^{(n)} \mid  X_0,\ldots,X_{m(t)},X_n=y} (I_t^{(n)}) f^{X_n \mid X_{0}, \ldots, X_{m(t)}}(y) \rd y}.
\end{equation}    
\item If $(X_{0}, \ldots, X_{m(t)},X_n)$ is a discrete random variable, then 
\begin{equation}
    M_t = \frac{\sum\limits_{y} y f^{I_t^{(n)} \mid X_{0}, \ldots, X_{m(t)},X_n=y} (I_t^{(n)}) \P[X_n=y \mid X_{0}, \ldots, X_{m(t)}]  }{\sum\limits_{y}  f^{I_t^{(n)} \mid X_{0}, \ldots, X_{m(t)},X_n=y} (I_t^{(n)}) \P[X_n=y \mid X_{0}, \ldots, X_{m(t)}]} .
\end{equation}
\end{enumerate}
\end{prop}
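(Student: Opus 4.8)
The plan is to replay the one-arc Bayes computation of Proposition \ref{Bayes}, with the single conditioning variable $X_0$ there replaced by the vector $(X_0,\ldots,X_{m(t)})$ and the target $X_1$ replaced by $X_n$. Fix $t\in(T_0,T_n)_*$ and write $m=m(t)$, so that $t$ lies in the open sub-interval $(T_m,T_{m+1})$. By Definition \ref{nFAM} the FAM is already in the form $M_t=\E[X_n\|X_0,\ldots,X_m,I_t^{(n)}]$, so it suffices to identify the conditional distribution of $X_n$ given $(X_0,\ldots,X_m,I_t^{(n)})$ and then integrate the identity map against it. As in the one-arc case, the object that makes this work is the conditional density $z\mapsto f^{I_t^{(n)}\|X_0,\ldots,X_m,X_n=y}(z)$ of $I_t^{(n)}$ given $(X_0,\ldots,X_m,X_n)$; its existence follows from the standing assumption that the driver $(D_t)$ has a density, since $I_t^{(n)}=A_t^{(n)}+\sum_{i=0}^n g_i(t)X_i$ is, conditionally on $(X_0,\ldots,X_m,X_n)$, the sum of $A_t^{(n)}$ (which inherits a density from the driver for $t$ interior to a sub-interval) and a term independent of $(D_t)$, so its conditional law is a convolution with an absolutely continuous measure and is therefore absolutely continuous.

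I would then apply Bayes' rule conditionally on $(X_0,\ldots,X_m)$, essentially as in the proof of Proposition \ref{Bayes}: for $y,z\in\R$ and $\epsilon>0$, express $\P[X_n\leqslant y\|X_0,\ldots,X_m,\,z\leqslant I_t^{(n)}\leqslant z+\epsilon]$ as the ratio of $\P[z\leqslant I_t^{(n)}\leqslant z+\epsilon\|X_0,\ldots,X_m,X_n\leqslant y]\,\P[X_n\leqslant y\|X_0,\ldots,X_m]$ to $\P[z\leqslant I_t^{(n)}\leqslant z+\epsilon\|X_0,\ldots,X_m]$, and rewrite the denominator as $\int_\R\P[z\leqslant I_t^{(n)}\leqslant z+\epsilon\|X_0,\ldots,X_m,X_n=v]\,\d F^{X_n\|X_0,\ldots,X_m}(v)$. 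Dividing through by $\epsilon$ and letting $\epsilon\to0$ (legitimate by the existence of the conditional density together with dominated convergence) yields
\begin{equation*}
\d F^{X_n\|X_0,\ldots,X_m,\,I_t^{(n)}=z}(y)=\frac{f^{I_t^{(n)}\|X_0,\ldots,X_m,X_n=y}(z)\,\d F^{X_n\|X_0,\ldots,X_m}(y)}{\int_\R f^{I_t^{(n)}\|X_0,\ldots,X_m,X_n=v}(z)\,\d F^{X_n\|X_0,\ldots,X_m}(v)}.
\end{equation*}
Evaluating at $z=I_t^{(n)}$ and computing $M_t=\int_\R y\,\d F^{X_n\|X_0,\ldots,X_m,I_t^{(n)}}(y)$, which is finite since $\pi^X\in\mathcal M(\mu_0,\ldots,\mu_n)\subseteq\mathcal P_1(\R^{n+1})$, gives the stated formula. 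The two ``in particular'' cases are immediate specializations of the Stieltjes measure $\d F^{X_n\|X_0,\ldots,X_m}$: it equals $f^{X_n\|X_0,\ldots,X_m}(y)\,\d y$ in the continuous case, and is purely atomic with weights $\P[X_n=y\|X_0,\ldots,X_m]$ in the discrete case, turning the integrals into sums.

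The main point requiring care, beyond mechanically transcribing the $n=1$ argument, is the existence of $f^{I_t^{(n)}\|X_0,\ldots,X_m,X_n}$. Unlike the one-arc situation, conditioning on $(X_0,\ldots,X_m,X_n)$ does not reduce $I_t^{(n)}$ to a deterministic shift of $A_t^{(n)}$ when $t$ lies before the final arc, because of the residual summand $g_{m+1}(t)X_{m+1}$ (the conditions defining the nearly-Markov property in Theorem \ref{thmsemimarkov} pin down $g_{m+1}$ on $(T_m,T_{m+1})$ but do not force it to vanish there), so the target $X_n$ affects the conditional density only through the conditional law of $X_{m+1}$. One thus has to note that this residual term is independent of the driver, so adding it to the absolutely continuous law of $A_t^{(n)}$ preserves absolute continuity, and that its presence is otherwise harmless in the Bayes and limiting steps. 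Once this is in place, the passage $\epsilon\to0$ and the final integration against the posterior are routine, and what remains is exactly the bookkeeping already done for $n=1$.
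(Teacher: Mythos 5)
Your proposal is correct and follows essentially the same route as the paper, which simply states that the proof follows from the one-arc case (Proposition \ref{Bayes}); you transcribe that Bayes computation with $(X_0,\ldots,X_{m(t)})$ in place of $X_0$ and $X_n$ in place of $X_1$. Your additional remark on why the conditional density of $I_t^{(n)}$ given $(X_0,\ldots,X_{m(t)},X_n)$ still exists despite the residual intermediate term $g_{m+1}(t)X_{m+1}$ (convolution with an absolutely continuous law) is a point the paper glosses over, and it is handled correctly.
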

The proof follows along the lines of the one-arc case. Introducing the notations
\begin{equation}
    u(I_t^{(n)},t,x_{m +1},\ldots,x_{n-1},y)= f^{I_t^{(n)} \mid X_0,\ldots,X_{m(t)}, X_{m(t)+1}= x_{m +1}, \ldots, X_{n-1}=x_{n-1}, X_n=y} (I_t^{(n)})
\end{equation}
and $\tilde F:=F^{X_{m(t)+1}, \ldots, X_{n-1}, X_n \mid X_{0}, \ldots, X_{m(t)}}$, it is often more convenient to write
\begin{dmath}
    M_t = \frac{\int_{\R^{n-m(t)}} y\, u(I_t^{(n)},t,x_{m +1},\ldots,x_{n-1},y)  \rd \tilde F ( x_{m +1}, \ldots,x_{n-1},y)}{\int_{\R^{n-m(t)}} u(I_t^{(n)},t,x_{m +1},\ldots,x_{n-1},y) \rd \tilde F ( x_{m +1}, \ldots,x_{n-1},y)}.
\end{dmath}
We can apply Itô's formula in the same fashion as in the one-arc case. An interesting pattern appears if the signal function of $(I_t^{(n)})$ satisfies $g_j(x) \1_{[T_0,T_{j-1}]}(x)=0$ for all $ j=1,\ldots, n$, and for all $ x \in [T_0,T_n]$, which significantly simplifies the $n$-arc case.
\begin{prop} \label{special}
Let $M_t = \E[ X_n \mid X_0, \ldots, X_{m(t)}, I_t^{(n)} ]$ be an $n$-arc FAM, where the signal function of $(I_t^{(n)})$ satisfies $g_j(x) \1_{[T_0,T_{j-1}]}(x)=0$ for all $ j=1,\ldots, n$, and for all $ x \in [T_0,T_n]$. Then, $M_t=\E[ X_{m(t)+1} \mid X_0, \ldots, X_{m(t)}, I_t^{(n)} ]$.
\end{prop}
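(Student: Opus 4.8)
The plan is to fix $t$ and set $m=m(t)$. The endpoint case $t=T_j$ is immediate: there $I_{T_j}^{(n)}=X_j$, so $\sigma(X_0,\ldots,X_j,I_{T_j}^{(n)})=\sigma(X_0,\ldots,X_j)$ and both conditional expectations reduce to $\E[X_n\|X_0,\ldots,X_j]=X_j=\E[X_{j+1}\|X_0,\ldots,X_j]$, the equalities holding by the martingale coupling property of $\pi^X$. Hence it suffices to treat $t\in(T_m,T_{m+1})$, where I would prove $\E[X_n\,\phi]=\E[X_{m+1}\,\phi]$ for every bounded measurable $\phi=\phi(X_0,\ldots,X_m,I_t^{(n)})$. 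Since both $\E[X_n\|X_0,\ldots,X_m,I_t^{(n)}]$ and $\E[X_{m+1}\|X_0,\ldots,X_m,I_t^{(n)}]$ are $\sigma(X_0,\ldots,X_m,I_t^{(n)})$-measurable and integrable (finite first moments hold because $\pi^X\in\mathcal P_1(\R^{n+1})$), the equality of their integrals against every such $\phi$ forces $M_t=\E[X_n\|X_0,\ldots,X_m,I_t^{(n)}]=\E[X_{m+1}\|X_0,\ldots,X_m,I_t^{(n)}]$ almost surely, which, recalling $m=m(t)$, is the assertion.

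First I would truncate the signal function by means of the hypothesis $g_j(x)\1_{[T_0,T_{j-1}]}(x)=0$, which is exactly condition \eqref{subcond1}: for $t\in(T_m,T_{m+1})$ and any $j\geqslant m+2$ one has $T_0\leqslant t<T_{m+1}\leqslant T_{j-1}$, hence $g_j(t)=0$, so that
\begin{equation*}
I_t^{(n)}=\sum_{i=0}^{m+1}g_i(t)\,X_i+A_t^{(n)}.
\end{equation*}
Thus $X_{m+2},\ldots,X_n$ do not enter $I_t^{(n)}$; only $X_0,\ldots,X_{m+1}$ and the noise process do.

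The central step is to exploit $A^{(n)}\ind X$ (Definition \ref{RAP}). One cannot invoke the tower property directly, because $\sigma(X_0,\ldots,X_m,I_t^{(n)})$ is not contained in $\sigma(X_0,\ldots,X_{m+1})$: it also carries the randomness of the noise, so the conditioning must first be pushed through the independent noise. Concretely, for bounded measurable $\phi$, independence and Fubini give
\begin{equation*}
\E\!\left[X_n\,\phi\big(X_0,\ldots,X_m,I_t^{(n)}\big)\right]=\int \E\!\left[X_n\,\phi\big(X_0,\ldots,X_m,\textstyle\sum_{i=0}^{m+1}g_i(t)X_i+a\big)\right]\P_{A_t^{(n)}}(\d a).
\end{equation*}
For fixed $a$ the inner expectation equals $\E[X_n\,h_a(X_0,\ldots,X_{m+1})]$ for a bounded $h_a$ depending only on $(X_0,\ldots,X_{m+1})$, whence $\E[X_n\,h_a]=\E\!\left[\E[X_n\|X_0,\ldots,X_{m+1}]\,h_a\right]=\E[X_{m+1}\,h_a]$ by $\pi^X\in\mathcal M(\mu_0,\ldots,\mu_n)$. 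Integrating back over $a$ yields $\E[X_n\,\phi]=\E[X_{m+1}\,\phi]$, which closes the argument.

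I do not expect a genuine obstacle. The only points needing attention are the justification of Fubini (immediate from $\E[\,|X_n|\,]<\infty$ and boundedness of $\phi$), the choice of a class of test functions $\phi$ rich enough to generate $\sigma(X_0,\ldots,X_m,I_t^{(n)})$ (bounded continuous functions, or indicators of measurable rectangles, will do), and, as stressed above, the need to condition on the noise $A_t^{(n)}$ before applying the martingale coupling property rather than using the tower property naively.
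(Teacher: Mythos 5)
Your proof is correct, and it rests on the same two pillars as the paper's: the hypothesis $g_j(x)\1_{[T_0,T_{j-1}]}(x)=0$ forces $I_t^{(n)}=\sum_{i=0}^{m+1}g_i(t)X_i+A_t^{(n)}$ on $(T_m,T_{m+1})$, and the martingale coupling property then collapses $X_n$ to $X_{m+1}$. The execution, however, differs. The paper stays inside the Bayes-formula representation of $M_t$: it writes the conditional density $u$ of $I_t^{(n)}$ given $X$ as $\varphi\bigl(I_t^{(n)}-\sum_{i=0}^{m}g_i(t)X_i-g_{m+1}(t)x_{m+1}\bigr)$, observes that it depends only on $x_{m+1}$, and then integrates out $x_{m+2},\ldots,x_{n-1},y$ by applying $\E[X_n\|X_0,\ldots,X_{n-1}]=X_{n-1}$ iteratively inside the numerator. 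You instead work with test functions $\phi(X_0,\ldots,X_m,I_t^{(n)})$, disintegrate over the law of the independent noise $A_t^{(n)}$ via Fubini, and apply the coupling property in a single step through $\E[X_n\|X_0,\ldots,X_{m+1}]=X_{m+1}$. Your route is somewhat more economical: it bypasses the standing assumption (used in the paper's proof) that the noise process admits a density, and it avoids the iterated marginalization. What the paper's density computation buys is that it sits directly in the formula used afterwards to derive the SDE of the $n$-arc FAM, so the reduction to the one-arc case is immediate from the displayed expression. Your correct observation that one cannot apply the tower property naively, because $\sigma(X_0,\ldots,X_m,I_t^{(n)})\not\subseteq\sigma(X_0,\ldots,X_{m+1})$, is exactly the point the conditioning on the noise is designed to handle in both arguments.
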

\begin{proof}
The result is straightforward for $t \in \{T_0,T_n\}_*$, so we treat the case $t\in (T_0,T_n)_*$. Let $\varphi_t$ be the density function of $A_t^{(n)}$. Then,
\begin{align}
    u(I_t^{(n)},t,x_{m +1},\ldots,x_{n-1},y) &= \varphi_t\left( I_t^{(n)} - \sum_{i=0}^{m(t)} g_i(t) X_i -  g_{m(t)+1}(t) x_{m+1})\right),
\end{align}
since $g_j(x) \1_{[T_0,T_{j-1}]}(x)=0$ for all $ j=1,\ldots, n$, and for all $ x \in [T_0,T_n]$. Denoting by $\tilde F$ the conditional distribution of $(X_{m(t)+1},\ldots, X_n)$ given $(X_0,\ldots, X_{m(t)})$, i.e., 
\begin{equation}
    \tilde F(x_{m+1},\ldots, x_n)= F^{X_{m(t)+1}, \ldots, X_n | X_0,\ldots, X_{m(t)}} (x_{m+1},\ldots, x_n),
\end{equation}
we have
\begin{align}
        &\int_{\R^{n-m(t)}} u(I_t^{(n)},t,x_{m +1},\ldots,x_{n-1},y) \rd \tilde F (x_{m+1},\ldots, x_{n-1},y) \nn \\
        &\hspace{3cm} =\int_\R u(I_t^{(n)},t,x_{m +1},\ldots,x_{n-1},y) \rd F^{X_{m(t)+1} | X_0,\ldots, X_{m(t)}} (x_{m+1}).
\end{align}
It also follows that
\begin{align}
    &\int_{\R^{n-m(t)}} y  \rd \tilde F (x_{m+1},\ldots, x_n) \nn \\
    &= \int_{\R^{n-m(t)}} y \rd F^{X_n|  X_0, \ldots, X_{m(t)}, X_{m(t)+1}=x_{m+1},\ldots, X_{n-1}=x_{n-1} } (y) \nn \\
    & \hspace{5cm} \rd F^{X_{m(t)+1},\ldots, X_{n-1} |  X_0, \ldots, X_{m(t)} }(x_{m+1},\ldots, x_{n-1} )  \nn\\ 
    &= \int_{\R^{n-m(t)-1}} \E[X_n \mid  X_0, \ldots, X_{m(t)}, X_{m(t)+1}=x_{m+1},\ldots, X_{n-1}=x_{n-1} ] \nn \\
    & \hspace{5cm} \rd F^{X_{m(t)+1},\ldots, X_{n-1} |  X_0, \ldots, X_{m(t)} }(x_{m+1},\ldots, x_{n-1} ) \nn\\
    &= \int_{\R^{n-m(t)-1}} x_{n-1} \rd F^{X_{m(t)+1},\ldots, X_{n-1} |  X_0, \ldots, X_{m(t)} }(x_{m+1},\ldots, x_{n-1} ),
\end{align}
where we use the martingale property of $(M_t)$. Applying the same argument $m(t)$ times, one obtains
\begin{align}
    &\int_{\R^{n-m(t)}} y  \rd \tilde F (x_{m+1},\ldots, x_n) = \int_{\R} x_{m+1} \rd F^{X_{m(t)+1} |  X_0, \ldots, X_{m(t)}} (x_{m+1}).
\end{align}
Thus,
\begin{align}
    &\int_{\R^{n-m(t)}} y u(I_t^{(n)},t,x_{m +1},\ldots,x_{n-1},y) \rd \tilde F (x_{m+1},\ldots, x_{n-1},y) \nn \\
    &\hspace{2cm}= \int_{\R} x_{m+1}u(I_t^{(n)},t,x_{m +1},\ldots,x_{n-1},y) d F^{X_{m(t)+1} |  X_0, \ldots, X_{m(t)}} (x_{m+1}),
\end{align}
which leads to
\begin{align}
    M_t &= \frac{\int_{\R^{n-m(t)}} y u(I_t^{(n)},t,x_{m +1},\ldots,x_{n-1},y) \rd \tilde F (x_{m+1},\ldots, x_{n-1},y)}{\int_{\R^{n-m(t)}}  u(I_t^{(n)},t,x_{m +1},\ldots,x_{n-1},y) \rd \tilde F (x_{m+1},\ldots, x_{n-1},y)} \nn \\ \nn \\
    &= \frac{\int_{\R} x_{m+1}u(I_t^{(n)},t,x_{m +1},\ldots,x_{n-1},y) \rd F^{X_{m(t)+1} |  X_0, \ldots, X_{m(t)}} (x_{m+1})}{\int_\R u(I_t^{(n)},t,x_{m +1},\ldots,x_{n-1},y) \rd F^{X_{m(t)+1} | X_0,\ldots, X_{m(t)}} (x_{m+1})} \nn \\ \nn \\
    &= \int_{\R} x_{m+1} \rd F^{X_{m(t)+1} |  X_0, \ldots, X_{m(t)}, I_t^{(n)}} (x_{m+1})\nn \\ \nn \\
    &= \E[ X_{m(t)+1} \mid X_0, \ldots, X_{m(t)}, I_t^{(n)} ],
\end{align}
where we use the Bayes rule in the second-last step.
\end{proof}
This allows one to use the results for the one-arc case, see Section \ref{1arccase} to derive the SDE for the $n$-arc FAM.
\begin{prop} \label{suitable}
Let $M_t = \E [ X_n \mid  X_0, \ldots, X_{m(t)}, I_t^{(n)} ]$ be an $n$-arc FAM. If the $X$-RAP $(I_t^{(n)})$ is standard and a semimartingale, satisfying the condition in Proposition \ref{special}, with driver covariance $K_D(x,y)=H_1(\min(x,y))H_2(\max(x,y))$, such that $(t,x) \rightarrow \E [ X_n \mid  X_0, \ldots, X_{m(t)}, I_t^{(n)}=x ]$ is $C^2(((T_0,T_n)_* \setminus N) \times Im(I_t^{(n)}))$ where $N \subset (T_0,T_n)_*$ contains finitely many elements, then the following holds:
\begin{enumerate}
    \item The process $(M_t)_{t\in[T_0,T_n]}$ satisfies the equation
    \begin{align}
    M_t & = X_0 \nn \\ &+ \int_{T_0}^t \frac{\var [ X_{m(s)+1} \mid  X_0, \ldots, X_{m(s)}, I_t^{(n)} ]\sqrt{H_1'(s)H_2(s)  - H_1(s) H_2'(s))} }{H_1(T_{m(s)+1})H_2(s)-H_1(s)H_2(T_{m(s)+1})} \rd W_s,
    \end{align}
where 
\begin{equation}
    W_t = \int_{T_0}^t \frac{1}{\sqrt{H_1'(s)H_2(s) - H_1(s) H_2'(s)}} \rd N_s,
\end{equation}
\begin{align}
    &\rd N_t \nn \\
    &=\bigg ( \frac{Z_t ( H_1'(t)H_2(T_{m(t)+1})  - H_1(T_{m(t)+1}) H_2'(t)) - M_t ( H_1'(t)H_2(t)  - H_1(t) H_2'(t))}{H_1(T_{m(t)+1})H_2(t) - H_1(t) H_2(T_{m(t)+1}) }   \nn \\
    &\hspace{10cm} - J_t \bigg )\rd t+ \rd I^{(1)}_t,
\end{align}
 $Z_t= I_t^{(1)} - \sum\limits_{i=0}^{m(t)} g_i(t)X_i  - \mu_A(t)$, and $J_t=\sum\limits_{i=0}^{m(t)} g_i'(t)X_i  + \mu_A'(t)$.
    \item The process $(W_t)_{t\in[T_0,T_n]}$ is an $(\mathcal F^I_t)$-adapted standard Brownian motion on $[T_0,T_n]$.
\end{enumerate}
\end{prop}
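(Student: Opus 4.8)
\emph{Overall strategy.} The plan is to reduce the $n$-arc statement to $n$ applications of the one-arc results of Section~4.1 — one on each arc $(T_m,T_{m+1})$ — and then concatenate the pieces across the breakpoints $T_1,\dots,T_{n-1}$. The engine of the reduction is Proposition~\ref{special}: on the arc $(T_m,T_{m+1})$, where $m(t)=m$, it gives $M_t=\E[X_{m+1}\mid X_0,\dots,X_m,I_t^{(n)}]$. Because $(I_t^{(n)})$ is a standard RAP, Definition~\ref{standardRAP} together with Corollary~\ref{coroexplicit} yields, for $t\in[T_m,T_{m+1}]$, the decomposition $I_t^{(n)}=\sum_{i=0}^{m}g_i(t)X_i+g_{m+1}(t)X_{m+1}+A_t^{(n)}$ with $g_{m+1}=f_{m+1}$ there, $K_A(x,y)=A_1(\min(x,y))A_2(\max(x,y))$, $A_1=f_{m+1}H_2(T_{m+1})$ and $A_2=(H_1(T_{m+1})/H_2(T_{m+1}))H_2-H_1$ on $(T_m,T_{m+1})$; here the condition $g_j\1_{[T_0,T_{j-1}]}=0$ forces $g_j\equiv 0$ on $(T_m,T_{m+1})$ for every $j>m+1$. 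Conditioning on $\sigma(X_0,\dots,X_m)$ turns $\sum_{i\le m}g_i(t)X_i$ into a deterministic continuous shift, so on this arc the triple $\bigl(\sigma(X_0,\dots,X_m),X_{m+1},(I_t^{(n)})_{t\in[T_m,T_{m+1}]}\bigr)$ plays exactly the role of $\bigl(\sigma(X_0),X_1,(I_t^{(1)})_{t\in[T_0,T_1]}\bigr)$ in Section~4.1 under the substitutions $T_0\mapsto T_m$, $T_1\mapsto T_{m+1}$, $g_0(t)X_0\mapsto\sum_{i\le m}g_i(t)X_i$, $g_1\mapsto g_{m+1}$, $X_1\mapsto X_{m+1}$.

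\emph{Transcribing the one-arc conclusions.} With that dictionary in place I would apply Proposition~\ref{Ito}, Proposition~\ref{propinnovations} and Corollary~\ref{FAMSDEfinal} verbatim on $(T_m,T_{m+1})$. This delivers, on each open arc: (i) the stated SDE for $(M_t)$, the substitution $g_0(t)X_0\mapsto\sum_{i\le m}g_i(t)X_i$ producing precisely $Z_t=I_t^{(n)}-\sum_{i\le m(t)}g_i(t)X_i-\mu_A(t)$ and $J_t=\sum_{i\le m(t)}g_i'(t)X_i+\mu_A'(t)$, and the one-arc endpoint $T_1$ being replaced by $T_{m(t)+1}$; and (ii) that, conditionally on $\sigma(X_0,\dots,X_m)$, the increments of $(N_t)$ on $[T_m,T_{m+1}]$ coincide with those of the one-arc innovations martingale, so that $\E[N_t\mid\F_s^I]=N_s$ for $T_m\le s\le t\le T_{m+1}$ (the $\sigma(X_0,\dots,X_m)$-measurable value $N_{T_m}$ factoring out of the conditional expectation), and $\d[N]_t=\d[I^{(n)}]_t=(H_1'(t)H_2(t)-H_1(t)H_2'(t))\,\d t$.

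\emph{Gluing across the breakpoints, and the main obstacle.} It remains to upgrade these arc-wise statements to $[T_0,T_n]$, and this is where I expect the real work to lie. Since $(I_t^{(n)})$ is sample-continuous and $M_{T_m}\eqas X_m$ (because $\pi^X$ is a martingale coupling and $I_{T_m}^{(n)}=X_m$, using the nearly-Markov property for $M_t=\E[X_n\mid X_0,\dots,X_{m(t)},I_t^{(n)}]$), the process $(M_t)$ is continuous on $[T_0,T_n]$; the delicate point is the drift of $\d N_t$, which carries a $1/h_3(t)$ singularity at each $T_{m+1}$ (the analogue of the $1/(T_1-t)$ Brownian-bridge singularity), with $h_3(t)=H_1(T_{m+1})H_2(t)-H_1(t)H_2(T_{m+1})\to0$. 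One must check that its numerator $Z_th_1(t)-M_th_2(t)$, with $h_1(t)=H_1'(t)H_2(T_{m+1})-H_1(T_{m+1})H_2'(t)$ and $h_2(t)=H_1'(t)H_2(t)-H_1(t)H_2'(t)$, tends to $X_{m+1}\bigl(h_1(T_{m+1})-h_2(T_{m+1})\bigr)=0$ at the same order as $h_3(t)$, so that the drift is integrable up to $T_{m+1}$ and $(N_t)$ is genuinely continuous at every breakpoint. Granting this, a continuous, integrable process that is an $(\F_t^I)$-martingale on each closed arc is an $(\F_t^I)$-martingale on $[T_0,T_n]$ by the tower property across $T_m$, namely $\E[N_t\mid\F_s^I]=\E[\E[N_t\mid\F_{T_m}^I]\mid\F_s^I]=\E[N_{T_m}\mid\F_s^I]=N_s$ for $s$ in arc $m-1$ and $t$ in arc $m$. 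Then $(W_t)$, built from $(N_t)$ as in the statement, is a continuous $(\F_t^I)$-local martingale with $[W]_t=t-T_0$ (the quadratic variation computed arc by arc as in Proposition~\ref{propinnovations}), hence a true martingale, and Lévy's characterization makes it a standard Brownian motion on $[T_0,T_n]$ adapted to $(\F_t^I)$. Re-expressing the arc-wise SDEs of $(M_t)$ against this single $W$, with $M_{T_0}=\E[X_n\mid X_0]=X_0$, gives the claimed integral representation; everything beyond the breakpoint analysis is a faithful transcription of Section~4.1 legitimised by Proposition~\ref{special}.
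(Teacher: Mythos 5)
Your proposal is correct and follows exactly the route the paper intends: the paper's entire proof is the single sentence ``The proof follows from the one-arc case,'' i.e.\ the reduction via Proposition \ref{special} to arc-wise applications of Propositions \ref{Ito} and \ref{propinnovations} that you carry out explicitly. Your additional analysis of the drift singularity at each breakpoint $T_{m+1}$ and the tower-property gluing of the arc-wise martingales is detail the paper leaves implicit, not a different argument.
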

The proof follows from the one-arc case, see Proposition \ref{Ito} to Corollary \ref{FAMSDEfinal}. Moreover, we note that 
\begin{align}
    &M_t \nn \\
    &= X_0 + \int_{T_0}^t \frac{\var [ X_{m(s)+1} \mid  X_0, \ldots, X_{m(s)}, I_t^{(n)} ]\sqrt{H_1'(s)H_2(s)  - H_1(s) H_2'(s))} }{H_1(T_{m(s)+1})H_2(s)-H_1(s)H_2(T_{m(s)+1})} \rd W_s \nn \\
    &= X_{m(t)} + \int_{T_{m(t)}}^t \frac{\var [ X_{m(s)+1} \mid  X_0, \ldots, X_{m(s)}, I_t^{(n)} ]\sqrt{H_1'(s)H_2(s)  - H_1(s) H_2'(s))} }{H_1(T_{m(s)+1})H_2(s)-H_1(s)H_2(T_{m(s)+1})} \rd W_s.
\end{align}

\section{Outlook on IB-MOT and applications}
In this paper, a new approach is developed that allows for strong interpolation of a sequence of random variables by deploying a generalisation of anticipative stochastic bridges, the RAPs. These processes generate filtrations that are used to construct FAMs, a class of processes that interpolate between the components of a discrete-time martingale, almost surely. For a fixed driver and interpolating coefficients, there is essentially a one-to-one mapping between discrete-time martingales and FAMs, i.e., no two FAMs with the same driver and interpolating coefficients interpolate the same discrete-time martingale. 

FAMs solve the so-called martingale interpolating problem on the space of random variables in a novel way. Relaxing this interpolation problem on the space of probability measures provides a new degree of freedom for FAMs: in law, there are as many FAMs sharing the same driver, interpolating coefficients, and connecting the target measures, as there are elements in the set of martingale couplings of the target measures. Selecting one of the FAMs in law amounts to selecting one of the martingale couplings. This is reminiscent of optimal transport (see \cite{Monge}, \cite{Kantorovich}, \cite{Brenier} for foundational work and \cite{Bogachev}, \cite{Villani1}, \cite{Villani2} for modern treatments), where the aim is to select an optimal coupling. The question of whether there is a connection to optimal transport and, if so, what relation it might be, arises naturally at this stage. 

Hence, in what follows, we investigate FAMs in law in the context of optimal transport and its noisy versions, since FAMs are inherently noisy. This then serves as a segue into considering martingale transport within the filtered arcade martingale paradigm, which we call {\it information-based martingale optimal transport} (IB-MOT), given the FAMs' in-built informational (Bayesian) updating of their trajectories to target. The development of IB-MOT is presented in the working paper \cite{IBMOT}.

The construction of a FAM $(M_t)$ requires a martingale vector $X$ and an $X$-RAP. In real-world situations, one might not have the vector $X$, but only its marginal distributions. In such a case, one needs to select a martingale coupling for these marginal distributions in order to construct a FAM in law. Could the behaviour of the FAMs associated with each possible coupling for the given marginal distributions inform us on which coupling to choose, i.e., is there an optimal coupling for a FAM? For each coupling, a FAM produces the best estimate of the last target random variable given the information generated by the $X$-RAP up until the time at which the estimation is performed. A natural question arises: among all the estimates, which coupling yields the worst-case scenario? This idea of choosing a coupling based on paths connecting its marginal distributions on the space of probability measures reminds us of optimal transport-like problems. Let us briefly summarise, for the purpose of clarifying the context of our train of thought, the salient points of optimal transport.

Optimal transport (OT) dates back to Gaspard Monge in 1781 \cite{Monge}, with significant advancements by Leonid Kantorovich in 1942 \cite{Kantorovich} and Yann Brenier in 1987 \cite{Brenier}. It provides a way of comparing two measures, $\mu$ and $\nu$, defined on the Borel sets of topological Hausdorff spaces $\X$ and $\Y$, respectively. We denote by $\mathcal P(\X)$ and $\mathcal P(\Y)$ the sets of Borel measures on $\X$ and $\Y$, respectively. A popular mental image in the context of optimal transport is a pile of sand, modelled by a measure $\mu$, and a hole, modelled by another measure $\nu$. One wishes to fill the hole with the sand in an optimal manner, by exercising the least amount of effort. To make this statement more precise, one needs a cost function $c: \X \times \Y \rightarrow [0,\infty]$ that measures the cost of transporting a unit mass from $x \in \X$ to $y\in\Y$. The optimal transport problem is concerned with how to transport $\mu$ to $\nu$ whilst minimising the cost of transportation. That is, given $\mu \in \mathcal{P}(\X)$ and $\nu \in \mathcal{P}(\Y)$,
\begin{equation} \label{chap6OT}
    \inf_{\pi \in \Pi(\mu,\nu)} \K(\pi) := \inf_{\pi \in \Pi(\mu,\nu)} \int_{\X \times \Y} c(x,y) \d \pi(x,y).
\end{equation}
This problem possesses many interesting properties. For instance, under mild conditions, it defines a collection of metrics denoted $W_p$ for any $p\geqslant 1$, the Wasserstein $p$-metric, on the space $\mathcal P_p(\X)$ of probability measures on $\X$ with finite $p$th moment. Furthermore, if $\X$ is Euclidean, and $c(x,y)=\norm{x-y}$, there is a one-to-one correspondence between the minimisers $\pi^*$ of $\inf_{\pi \in \Pi(\mu,\nu)} \int_{\X \times \Y} c(x,y)^p \d \pi(x,y)$, and the geodesics in $(\mathcal P_p(\X),W_{p})$. If $(X_0,X_1) \sim \pi^*$, then the law of the process 
\begin{equation}
    \left(\frac{T_1-t}{T_1-T_0} X_0 + \frac{t-T_0}{T_1-T_0} X_1\right)_{t\in[T_0,T_1]}
\end{equation}
is the shortest paths from $\mu$ to $\nu$ in $(\mathcal P_p(\X),W_{p})$. This links optimal transport to interpolation on the space of random variables. There are two main differences between a FAM for $(X_0,X_1)$ and $(X_0(T_1-t)/(T_1-T_0)  + X_1(t-T_0)/(T_1-T_0) )$. The latter process is not a martingale, and does not incorporate noise, resulting in a deterministic interpolation conditional on $(X_0,X_1)$. Could we modify OT in a way to incorporate noise and produce a martingale interpolator instead? Separately, these conditions are satisfied by well-studied offsprings of OT. The entropic regularisation of optimal transport (\cite{Leonard1}, \cite{Peyre1}), which coincides with Schrödinger's problem \cite{Schrodinger} under mild conditions, injects Brownian noise into OT, producing a noisy interpolator. On the other hand, martingale optimal transport (\cite{Beiglbock}, \cite{Beiglbock2}) replaces $\Pi(\mu,\nu)$ by $\mathcal M(\mu,\nu)$ in Eq. (\ref{chap6OT}). However, when imposing the martingale condition in OT, the interpolator corresponding to the coupling is lost, whereas when trying to inject noise in OT, the problem becomes the opposite of a martingale problem: one seeks to find a drift for a diffusion with a constant volatility. 

It is our view that FAMs could be used to reconcile the introduction of noise in martingale optimal transport (MOT) and martingale interpolation. Instead of comparing the distributions of $X_0$ and $X_1$ using the expectation $\E_\pi[c(X_0,X_1)]$, we propose to investigate the expectation of the cumulative cost in time on $[T_0,T_1]$ incurred by estimating $X_1$ by $M_t$. This would result in an information-based version of MOT, since the information flow generated by the $X$-RAP is taken into account in determining the optimal coupling. Mathematically, the problem can be stated as
\begin{equation}
    \sup_{(X_0,X_1) \sim \pi^X \in \mathcal M (\mu,\nu)} \E \left[ \int_{T_0}^{T_1} \frac{[X_1-M_t(X_0,I_t^{(1)})]^2 \sqrt{H_1'(t)H_2(t)  - H_1(t) H_2'(t) }}{H_1(T_1)H_2(t)-H_1(t)H_2(T_1)} \rd t \right],
\end{equation}
see Section \ref{FAMchapter} for the introduced notion. Because we consider the cumulative cost in time on $[T_0,T_1]$, we propose to multiply the cost $[X_1-M_t(X_0,I_t^{(1)})]^2$ by a weight function that informs the problem about the rate of convergence of $M_t$ towards $X_1$. By construction, the cost will always be small towards the end of the time interval, regardless of the coupling. The weight function allows for a fair comparison between the costs arising across different times by removing the bias introduced in the construction of $(M_t)$.

By maximising the expectation of the cumulative cost in time between $X_1$ and $M_t$ over the set of martingale couplings, and constructing the FAM corresponding to the maximising martingale coupling, one obtains the ``worst-case'' FAM in law given the information generated by the underlying RAP. The IB-MOT theory, and applications thereof, is developed in a follow-on paper \cite{IBMOT}.

Information-based stochastic interpolation can also be considered using FARMs $(M_t^-)$. For example, a government body may need to decide on future climate goals in terms of, e.g., atmospheric CO2 emissions. Future goals depend on a succession of intermediate achievements, which are reached with uncertainty. So, these goals can be modelled by random variables with probability distributions in concave order, if a governmental body is interested in progressively decreasing the uncertainty around the mean  until the final goal is reached. One can then look for an optimal reverse-martingale coupling linking the intermediate goals using the same procedure described above while replacing FAMs with FARMs: one aims at maximising the expectation of the cumulative cost in time between $X_0$ and $M_t^-$ over the set of reverse-martingale couplings. Then, constructing the FARM corresponding to the maximising reverse-martingale coupling, one obtains the ``worst-case'' FARM given the information generated by the underlying RAP, allowing one to understand if the goals are likely to be achieved, and if so, what trajectories must be taken in the near future to succeed.

We note that IB-MOT is not the only optimal transport problem that incorporates noise while maintaining the martingale property and producing an interpolator. The martingale Benamou-Brenier problem (MBBP) \cite{Veraguas} and Schrödinger's volatility models (SVMs) \cite{Labordere} are examples of such problems. IB-MOT is an addition to this class of problems, relying on filtering to select the optimal coupling instead.

\section{Conclusions}
Given a finite sequence of convexly ordered random variables that are indexed by pre-specified fixed dates, we show how continuous-time martingales, the so-called \textit{filtered arcade martingales} (FAMs, Definitions \ref{1FAM} and \ref{nFAM}) can be constructed explicitly such that the target random variables are matched at the given dates, almost surely. These interpolating martingales, defined as conditional expectations, filter noisy information about the last target random variable such that the interpolation through the whole sequence of random variables is guaranteed. This martingale-interpolating method relies on a specific class of stochastic processes, the \textit{randomized arcade processes} (RAPs, Definition \ref{RAP}). These processes, which interpolate strongly between the target random variables, generate information flows that reveal each target random variable at its indexing date, thus allowing a FAM to interpolate through the sequence of random variables. FAMs are solutions to filtering problems where the best estimate of the last target random variable is calculated given the filtration generated by the observation process, i.e., the RAP.

A randomised arcade process is the sum of two independent components: a signal function and a noise process, the latter is the so-called \textit{arcade process} (AP, Definition \ref{AP}). An AP is a functional of a stochastic driving process, and interpolates strongly between zeros. A well-known example of an AP over one period of time is the anticipative Brownian bridge (Example \ref{ABB}). Since APs play the role of noise, a natural class of APs are the ones driven by a Gaussian process. We give sufficient condition for such an AP to be Markovian (Corollary \ref{coromarkov}), and show that there are Markovian APs that are not anticipative representations of Markov bridges (Example \ref{nonstarcade}). As for the signal function, it interpolates between the target random variables, and it is deterministic conditional on these random variables. Since a RAP is a sum of a signal function and an AP, it does not possess the Markov property with respect to its natural filtration, except in trivial cases. Instead, it may have a similar property, the \textit{conditionally-Markov property} (Definition \ref{nearlymarkov}), which plays a central role for the computational tractability of a FAM, and for guaranteeing the interpolation in the case of more than two random variables. We give sufficient conditions for a RAP to satisfy the conditionally-Markov property (Theorem \ref{thmsemimarkov}), and we consider FAMs constructed with such RAPs. Using the Bayes rule and Itô’s lemma, we derive the SDEs satisfied by such FAMs (Propositions \ref{Bayes} and \ref{Ito}). If furthermore, the RAPs are \textit{standard} (Definition \ref{standardRAP}), the obtained FAMs can be expressed in terms of an integral with respect to a Brownian motion adapted to the filtration generated by the RAP underlying the FAM (Proposition \ref{propinnovations}). This Brownian motion corresponds to the innovations process in stochastic filtering theory. 

The FAM theory relies on convexly ordered probability measures, that is, the existence of a martingale coupling for the target variables. In some real life scenarios, one might not have the coupling, but only its marginal distributions. In that case, one must select a martingale coupling for these marginal distributions in order to construct a FAM. As a potential application in the field of optimal transport, that is studied in \cite{IBMOT}, we propose the use of the paths of FAMs to select an optimal coupling for the marginal distributions. This method is inspired by martingale optimal transport and Schrödinger’s problem. It aims at introducing noise in MOT in a similar fashion to how the entropic regularisation of optimal transport introduces noise in optimal transport. The noise contained in FAMs, which depends on the choice of the underlying RAP, is taken into consideration in the selection of an optimal coupling for the target random variables---we aim at maximising the expectation of the cumulative cost in time between the final target random variable and the considered FAM over the set of martingale couplings. The same ideas can be applied to FARMs, where we look instead for an optimal reverse-martingale coupling based on the paths of FARMs. Arcade processes and randomised arcade processes, alongside the filtered arcade martingales, suggest applications in, e.g., financial and insurance mathematics, mathematical biology, statistics, and climate science.

\section*{Acknowledgments}
A. Macrina is grateful for discussions on stochastic interpolation with Camilo A. Garcia Trillos and Jun Sekine, especially in the context of information-based modelling and stochastic filtering, during a research visit at Osaka University in March 2017 as part of a Daiwa Anglo-Japanese Foundation Small Grant, Ref: 3078/12370. Osaka University's hospitality is acknowledged and appreciated. A. Macrina is also grateful for the support by the Fields Institute for Research in Mathematical Sciences through the awards of a Fields Research Fellowship in 2022 and an Elliott-Yui Distinguished Visitorship in 2025. The contents of this manuscript are solely the responsibility of the authors and do not necessarily represent the official views of the Fields Institute. G. Kassis acknowledges the UCL Department of Mathematics for a Teaching Assistantship Award. Furthermore, the authors thank L. P. Hughston for helpful comments and suggestions at an early stage of the work on arcade processes, B. Acciaio and G. Pammer for pointers to optimal transport and stretched Brwonian motion, S. Cohen, J. Guyon, F. Krach, G. W. Peters, and T.-K. L. Wong for useful conversations, and J. Akahori, K. Owari and D. C. Schwarz for feedback. Attendees of the AIFMRM Research Seminar at the University of Cape Town (Aug. 2021) and participants in the SIAM Conference on Mathematics of Data Science (MDS22, Sept. 2022), the 7th International Conference Mathematics in Finance---MiF Kruger Park--- (July 2023, South Africa), the Talks in Financial and Insurance Mathematics at ETH Zurich (Nov. 2023, Switzerland), of the Department of Mathematics Seminar of Ritsumeikan University (Nov. 2023, Japan), and participants in the FAMiLLY Workshop at the University of Liverpool (Dec. 2023, U.K.), the XXV Workshop on Quantitative Finance (Apr. 2024, Italy), the Research Seminar of the Department of Mathematics \& Statistics, University of Ottawa (Nov. 2024, Canada), the Quantitative Methods in Finance (QMF) 2024 International Conference at the University of Technology Sydney (Dec. 2024, Australia), and in the 69th Annual Meeting of the Australian Mathematical Society (Dec. 2025, Australia) are thanked for their comments and suggestions. Last but not least, we are grateful to two anonymous reviewers and an associated editor for constructive comments that have helped with improving this paper.

\appendix
\section{Arcade processes}\label{appendixA}
In this appendix, we include further examples of arcade processes along with their simulations.
\begin{ex} \label{stitched}
For $n>1$, we can generalise the anticipative Brownian bridge by taking 
\begin{equation}
f_0(t)= \frac{T_{1}-t}{T_1-T_{0}} \1_{[T_0, T_1]} (t), \quad f_n(t)= \frac{t-T_{n-1}}{T_n-T_{n-1}} \1_{(T_{n-1}, T_n]} (t),
\end{equation}
and 
\begin{equation}
f_i(t)=\frac{t-T_{i-1}}{T_i-T_{i-1}} \1_{(T_{i-1}, T_{i}]} (t) + \frac{T_{i+1} - t}{T_{i+1}-T_i} \1_{(T_{i}, T_{i+1}]} (t), \quad \text{for } i=1,\ldots, n-1.
\end{equation} 
We call this AP the stitched Brownian AP, for it can be written as
\begin{equation}
A_{t}^{(n)}  = \left\{
    \begin{array}{ll}
         B_t - \frac{T_1-t}{T_1-T_0} B_{T_0} - \frac{t-T_0}{T_1-T_0} B_{T_1}, & \mbox{if } t \in [T_0,T_{1}), \\ \\
         B_t - \frac{T_2-t}{T_2-T_1} B_{T_1} - \frac{t-T_1}{T_2-T_1} B_{T_2}, & \mbox{if } t \in [T_1,T_{2}), \\ 
         \, \vdots \\
         B_t - \frac{T_n-t}{T_n-T_{n-1}} B_{T_{n-1}} - \frac{t-T_{n-1}}{T_n-T_{n-1}} B_{T_n}, & \mbox{if } t \in [T_{n-1},T_{n}].
    \end{array}
\right.
\end{equation}
\begin{figure}[H]
\centering
\includegraphics[width=.5\textwidth]{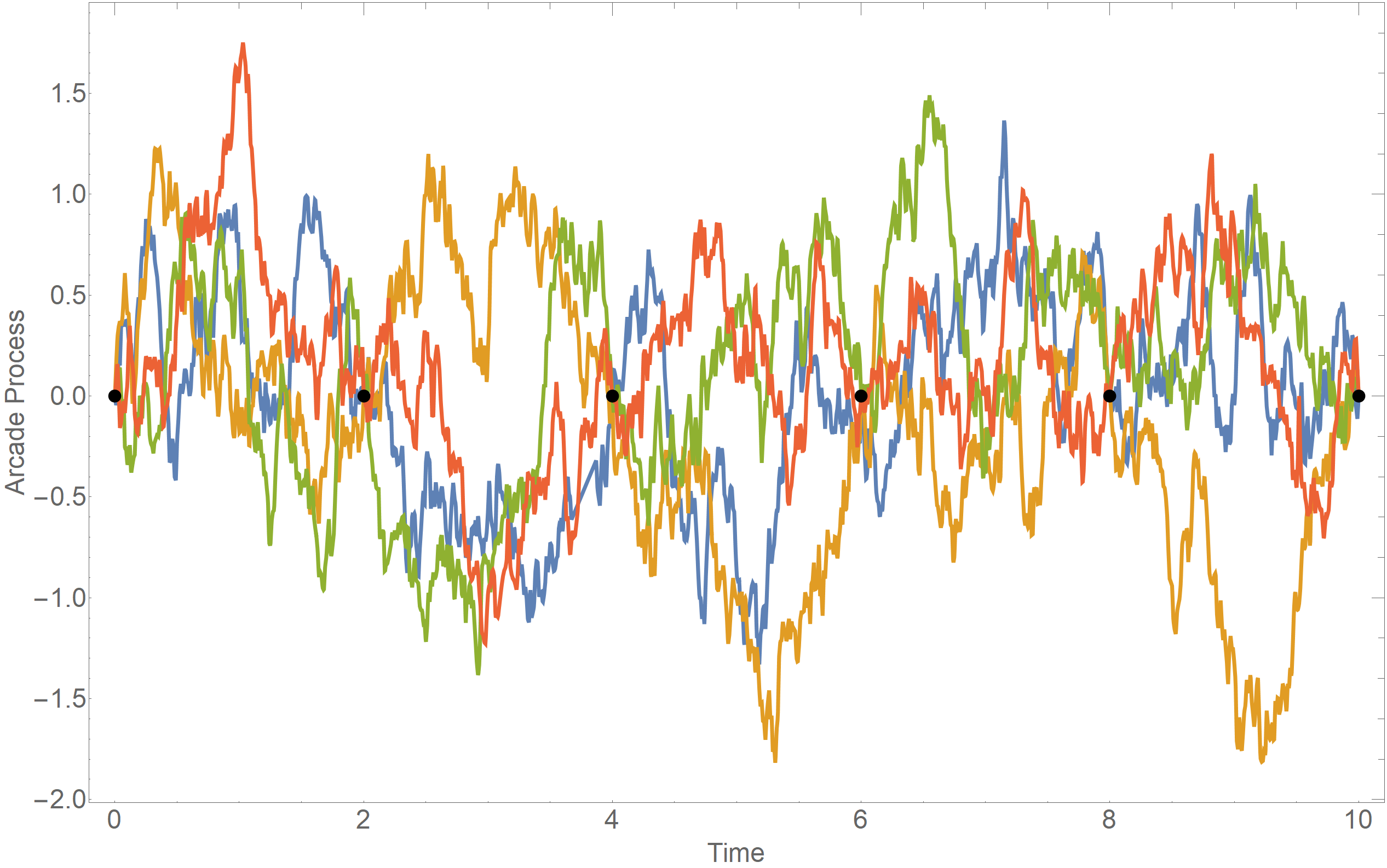}
\caption{Paths simulation of a stitched Brownian AP with $n=5$, using the equidistant partition $\{T_i = 2i \mid i=0,1,\ldots, 5\}$, cf., Example \ref{stitched}.}
\end{figure}
\end{ex}
\begin{ex} \label{ex25}
Another way of generalizing the anticipative Brownian bridge to obtain an AP driven by Brownian motion is by using Lagrange's polynomial interpolation, that is,
\begin{equation}
f_i(t)= \prod\limits_{k=0,k\neq i}^n\frac{ T_k-t}{ T_k-T_i} \quad \text{for } i=0,\ldots, n.
\end{equation}
We may call the resulting AP the Lagrange-Brownian AP, which has the form 
\begin{equation}
    A_t^{(n)}=B_t - \sum_{i=0}^n \prod\limits_{k=0,k\neq i}^n\frac{ T_k-t}{ T_k-T_i} B_{T_i}.
\end{equation}
\begin{figure}[H]
\centering
\includegraphics[width=0.5\textwidth]{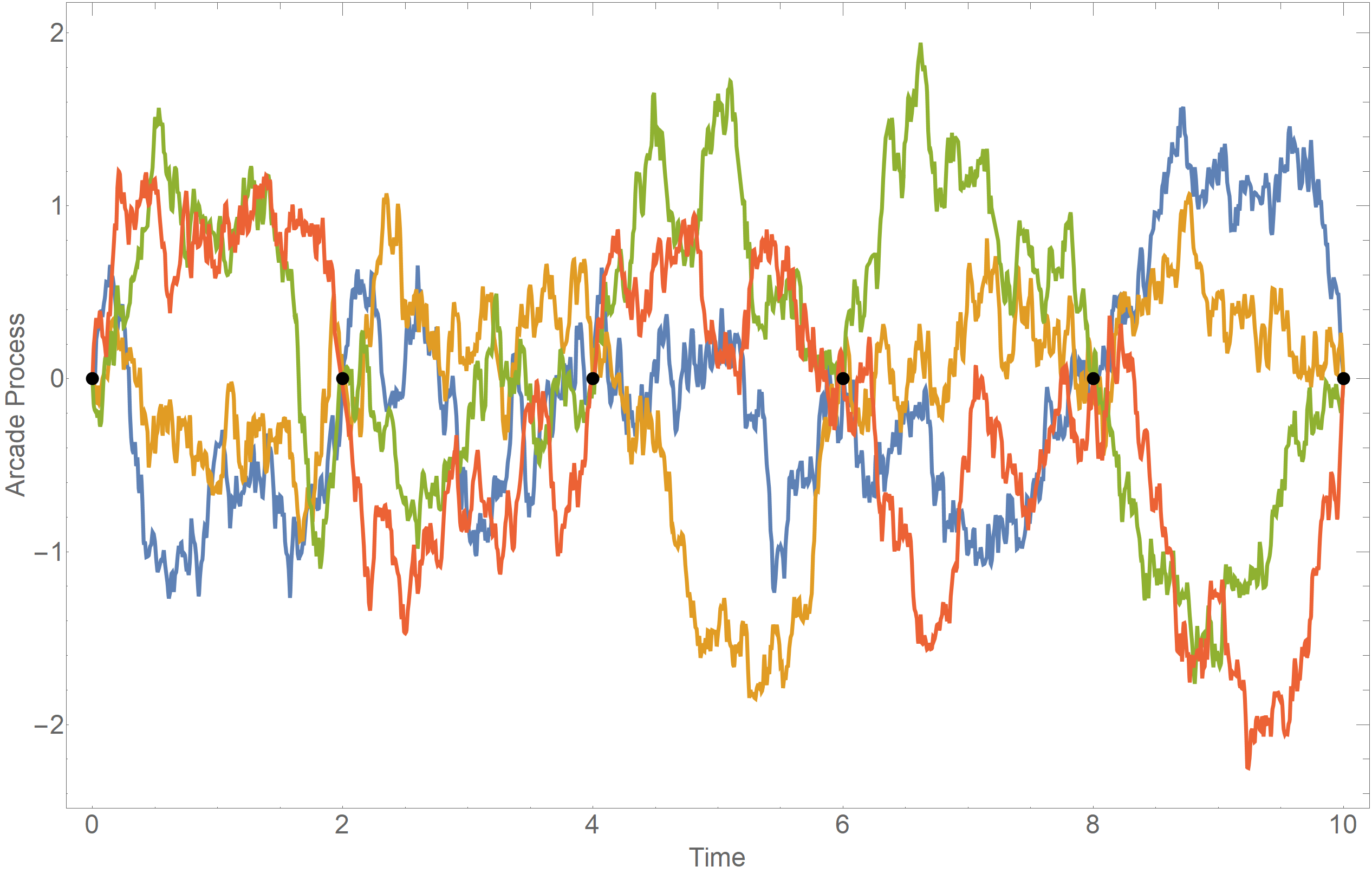}
\caption{Paths simulation of a Lagrange-Brownian AP with $n=5$, using the equidistant partition $\{T_i = 2i \mid i=0,1,\ldots, 5\}$, cf., Example \ref{ex25}.}
\end{figure}
The Lagrange APs inherit Runge's phenomenon from their interpolation coefficients. When $n$ is large, the AP oscillates around the edges of the interval. One can control this effect by applying a transformation to the interpolating coefficients. For instance, the map $x\mapsto |x|^{2(1-|x|)}$ applied to each of the interpolating coefficients $$ f_i(t)= \prod\limits_{k=0,k\neq i}^n\frac{ T_k-t}{ T_k-T_i}$$ for $i=0,\ldots, n$, yields another set of interpolating coefficients $\tilde f_0, \ldots, \tilde f_n$ which no longer suffer from Runge's phenomenon.
\begin{figure}[H]
\centering
\begin{minipage}[b]{0.49\textwidth}
\includegraphics[width=\textwidth]{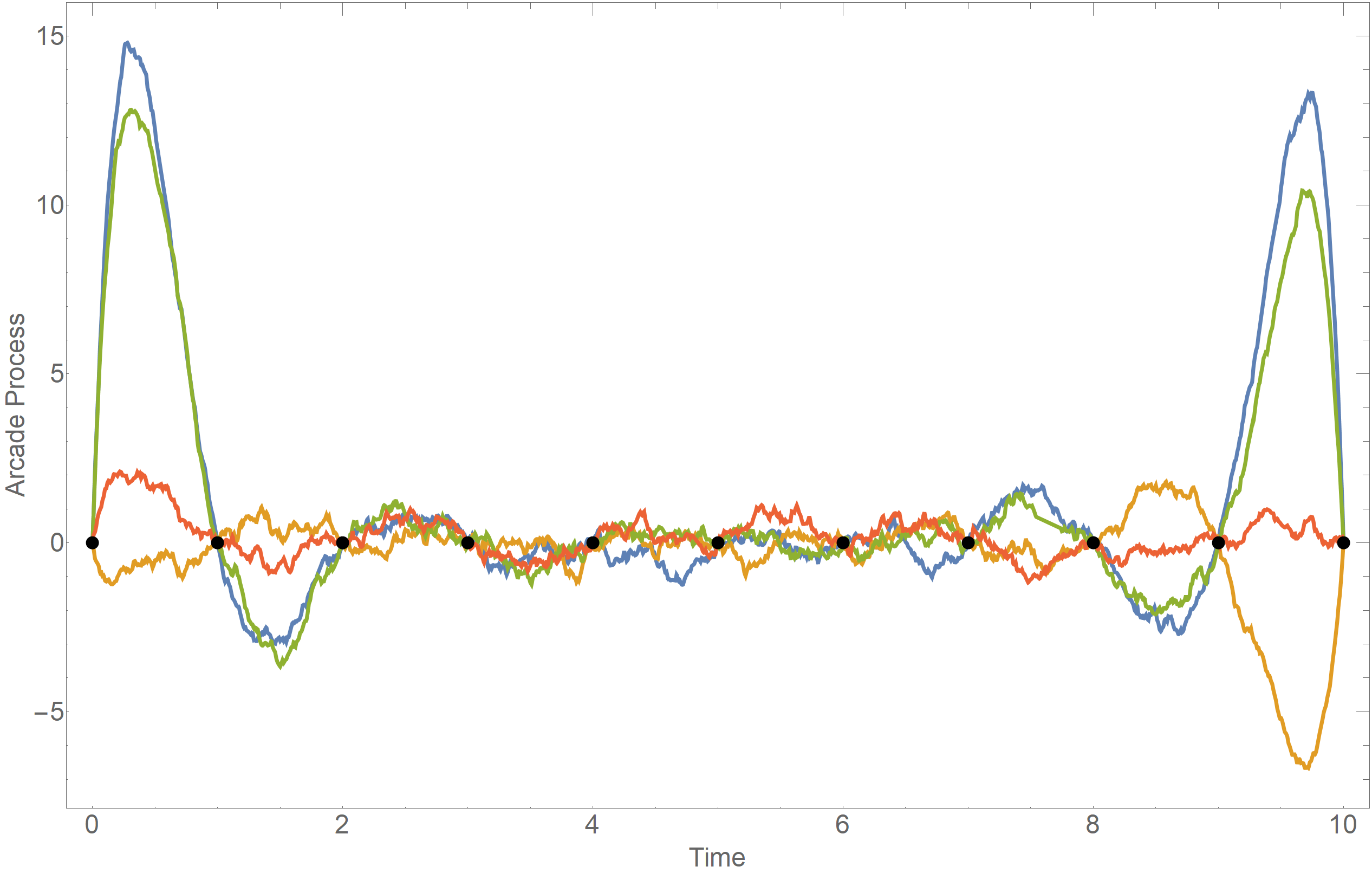}
\caption{Paths simulation of a Lagrange-Brownian AP with $n=10$, using the equidistant partition $\{T_i = i \mid i=0,1,\ldots, 10\}$, illustrating Runge's phenomenon, cf., Example \ref{ex25}.}
\end{minipage}
\hfill
\centering
\begin{minipage}[b]{0.49\textwidth}
\includegraphics[width=\textwidth]{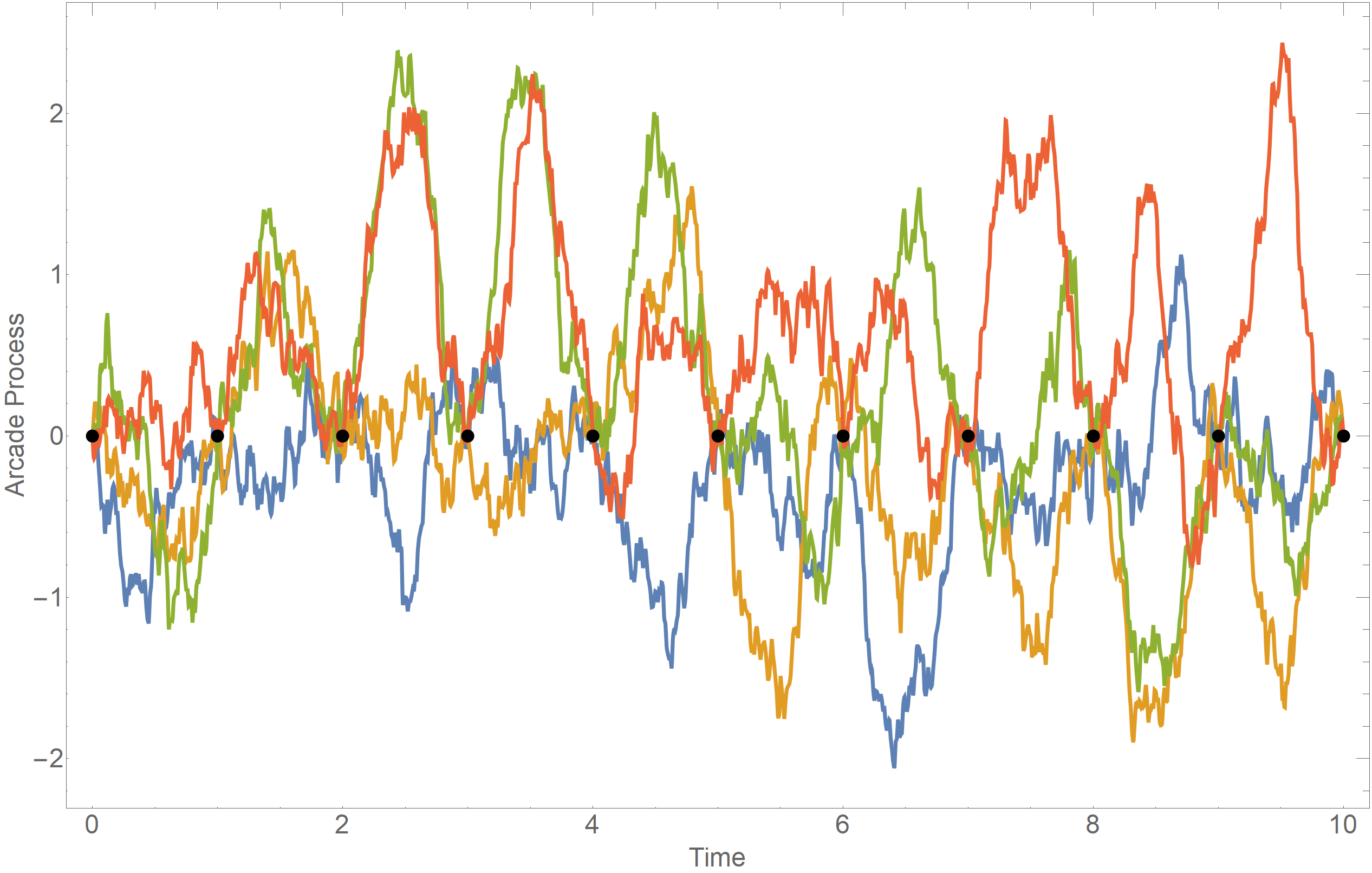}
\caption{Paths simulation of a Brownian AP with $n=10$ and interpolating coefficients $\tilde f_0, \ldots, \tilde f_{10}$, using the equidistant partition $\{T_i = i \mid i=0,1,\ldots, 10\}$, cf., Example \ref{ex25}.}
\end{minipage}
\end{figure}
\end{ex}
\begin{ex} \label{ex26}
Elliptic APs have interpolating coefficients given by
\begin{equation}
    f_0(t)= \sqrt{1 - \left( \frac{t-T_0}{T_1-T_0} \right)^2} \1_{[T_0, T_1]} (t), \quad f_n(t)= \sqrt{1 - \left( \frac{t-T_{n}}{T_n-T_{n-1}} \right)^2} \1_{(T_{n-1}, T_n]} (t),
\end{equation}
\begin{equation}
    f_i(t)= \sqrt{1 - \left( \frac{t-T_{i}}{T_i-T_{i-1}} \right)^2} \1_{(T_{i-1}, T_{i}]} (t) + \sqrt{1 - \left( \frac{t-T_{i}}{T_{i+1}-T_{i}} \right)^2} \1_{(T_{i}, T_{i+1}]} (t),
\end{equation}
for $i=1,\ldots, n-1$.
\begin{figure}[H]
\centering
\includegraphics[width=.5\textwidth]{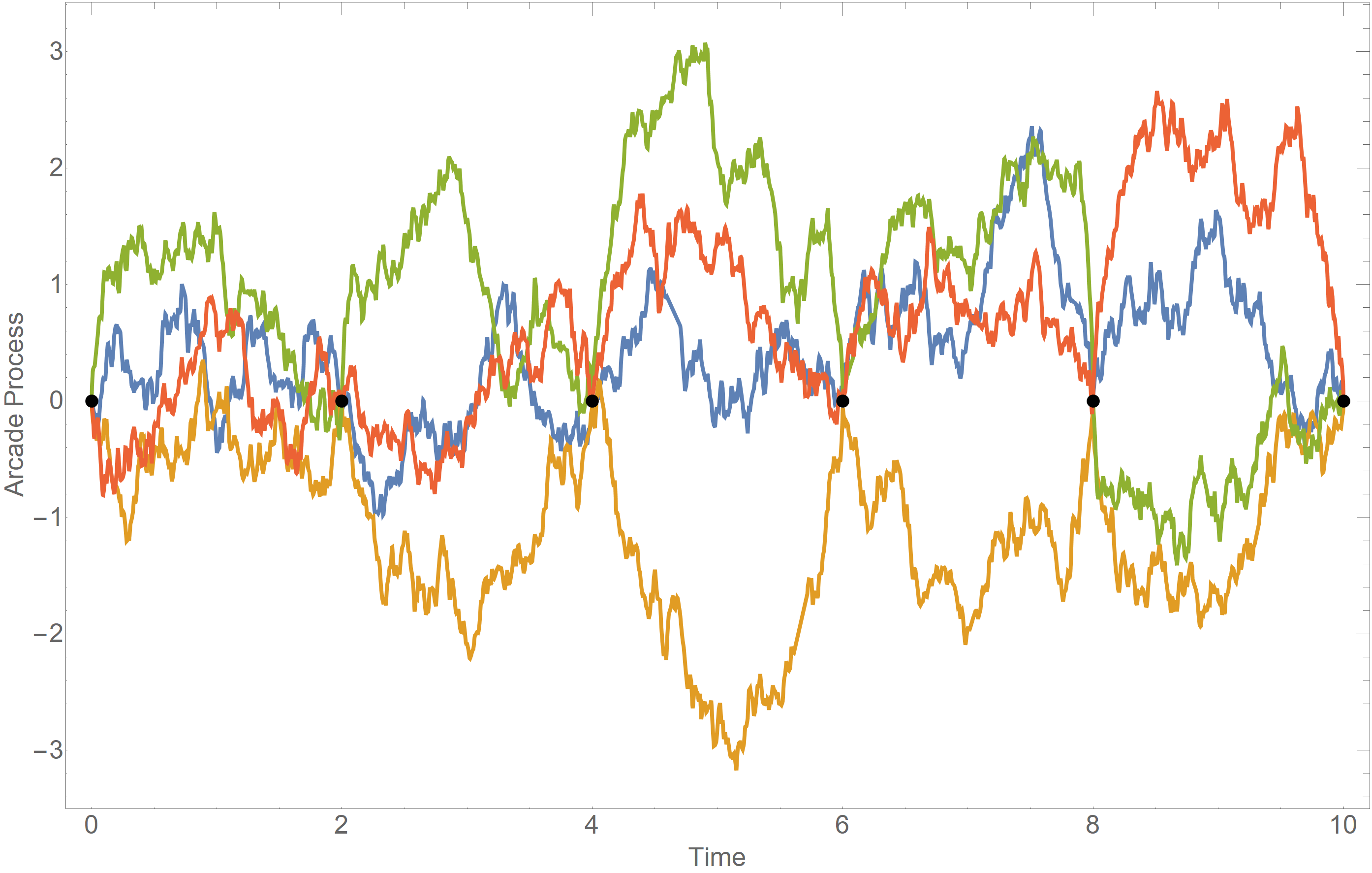}
\caption{Paths simulation of an elliptic-Brownian AP with $n=5$, using the equidistant partition $\{T_i = 2i \mid i=0,1,\ldots, 5\}$, cf., Example \ref{ex26}.}
\end{figure}
\end{ex}
\section{Filtered arcade martingales}\label{appendixB}
In this appendix, we include further examples of filtered arcade martingales.
\begin{ex}
Let $X_0 \sim \mathcal U ([-1,1])$, and $X_1 \sim \mathcal U ([-2,2])$. These probability distributions are convexly ordered, i.e., $X_0 \leqslant_{\mathrm{cx}} X_1$, hence there exists at least one martingale coupling for $(X_0,X_1)$. We choose the coupling defined by 
\begin{equation}
X_1 \mid X_0  = \left\{
    \begin{array}{ll}
        \frac{3}{2} X_0 + \frac{1}{2}  & \mbox{with probability } \frac{3}{4},\\ \\
         -\frac{1}{2} X_0 - \frac{3}{2}  & \mbox{with probability } \frac{1}{4}.
    \end{array}
\right.
\end{equation}
This is a martingale coupling since $\E[X_1 \mid X_0] = X_0$. In fact, it can be shown that this coupling is the solution to a martingale optimal transport problem, see \cite{Sester}. For any $\{T_0,T_1\}$-conditionally Markov $X$-RAP $(I_t^{(1)})$, we have 
\begin{align}
M_t &={\E [ X_1 \mid X_0, I_t^{(1)} ]} \nn \\
& = \frac{(9 X_0 + 3) f^{I_t^{(1)} \mid X_0,X_1=\frac{3}{2} X_0 + \frac{1}{2}} (I_t^{(1)})  -  ( X_0 + 3 ) f^{I_t^{(1)} \mid X_0,X_1=-\frac{1}{2} X_0 - \frac{3}{2} } (I_t^{(1)})}  { 6 f^{I_t^{(1)} \mid X_0,X_1=\frac{3}{2} X_0 + \frac{1}{2}} (I_t^{(1)})  +  2f^{I_t^{(1)} \mid X_0,X_1=-\frac{1}{2} X_0 - \frac{3}{2} } (I_t^{(1)})}.
\end{align}
\end{ex}
\begin{ex}
Let $X_0 \sim \mathcal N (0,1)$ and $X_1 \sim \mathcal N (0,2)$, where $X_1 \mid X_0 \sim \mathcal N (X_0,1)$. For any $\{T_0,T_1\}$-conditionally Markov $X$-RAP $(I_t^{(1)})$, we have 
\begin{equation}
    M_t =\E [ X_1 \mid X_0, I_t^{(1)} ]= \frac{\int_\R y f^{I_t^{(1)} \mid X_0,X_1=y} (I_t^{(1)}) \e^{\frac{-(y-X_0)^2}{2}}  \rd y}  {\int_\R f^{I_t^{(1)} \mid X_0,X_1=y} (I_t^{(1)}) \e^{\frac{-(y-X_0)^2}{2}}  \rd y} .
\end{equation}
\end{ex}
\begin{ex}
Let $D_t=tB_t$ where $(B_t)_{t \geqslant0}$ is a standard Brownian motion. Then $K_D(x,y)= \min(x,y)^2 \max(x,y)$, and so $H_1(x)=x^2$, $H_2(x)=x$. The standard AP on $[T_0,T_1]$ driven by $(D_t)$ is given by
\begin{equation}
    A_t^{(1)}= D_t - \frac{T_1t-t^2}{T_1T_0 - T_0^2} D_{T_0} - \frac{t^2-tT_0}{T_1^2 - T_1T_0} D_{T_1}.
\end{equation}
We have 
\begin{equation}
    K_A(x,y)= \frac{\min(x,y)(\min(x,y)-T_0) \max(x,y) (T_1-\max(x,y))}{T_1-T_0},
\end{equation}
hence $A_1(x)= (x(x-T_0))/(T_1-T_0)$ and $A_2(x)=x(T_1-x)$. A standard $X$-RAP with noise process $(A_t^{(1)})_{t\in [T_0,T_1]}$ is 
\begin{equation}
    I_t^{(1)}= D_t - \frac{T_1t-t^2}{T_1T_0 - T_0^2} (D_{T_0} -X_0) - \frac{t^2-tT_0}{T_1^2 - T_1T_0} (D_{T_1}-X_1).
\end{equation}
We recall that a different interpolating coefficient for $X_0$ could have been chosen without disrupting the standard property of $(I_t^{(1)})$. The quadratic variation of $(I_t^{(1)})$ is given by $\rd [I^{(1)}]_t= H_1'(t)H_2(t)  - H_1(t) H_2'(t) \rd t= t^2 \rd t$. Hence,
\begin{equation}
    M_t = X_0 + \int_{T_0}^t \frac{\var [ X_1 \mid X_0, I_t^{(1)} ] s }{T_1^2 s - s^2 T_1} \rd W_s =  X_0 + \int_{T_0}^t \frac{\var [ X_1 \mid X_0, I_s^{(1)} ]  }{T_1^2  - s T_1} \rd W_s.
\end{equation}
\end{ex}
\begin{ex}
Let $(D_t)$ be the Ornstein-Uhlenbeck process satisfying the SDE
\begin{equation}
    D_{t}=\int_0^t\theta\left(\mu-D_{s}\right) \rd s+\int_0^t\sigma \rd B_{s},
\end{equation}
where $\theta>0, \sigma>0, \mu \in \R$. Then $ K_D(s,t)=\frac{\sigma^2}{2 \theta} \e^{\theta \min(s,t)} \e^{-\theta \max(s,t)}$, and so $H_1(x)= \\\sigma^2/(2 \theta) \exp{[\theta x]}$, $H_2(x)=\exp{[-\theta x]}$. The standard AP driven by $(D_t)$ is 
\begin{equation}
    A_{t}^{(1)}=D_t-\frac{\e^{\theta (T_1-t)}-\e^{-\theta (T_1-t)}}{\e^{\theta (T_1-T_0)}-\e^{-\theta (T_1-T_0)}}  D_{T_0} - \frac{\e^{\theta (t-T_0)}-\e^{-\theta (t-T_0)}}{\e^{\theta (T_1-T_0)}-\e^{-\theta (T_1-T_0)}} D_{T_1}.
\end{equation}
The standard $X$-RAP with noise process $(A_t^{(1)})$ and the same interpolating coefficients for the signal function as in the noise process is 
\begin{equation}
    I_t^{(1)}= D_t - \frac{\e^{\theta (T_1-t)}-\e^{-\theta (T_1-t)}}{\e^{\theta (T_1-T_0)}-\e^{-\theta (T_1-T_0)}} (D_{T_0} -X_0) - \frac{\e^{\theta (t-T_0)}-\e^{-\theta (t-T_0)}}{\e^{\theta (T_1-T_0)}-\e^{-\theta (T_1-T_0)}} (D_{T_1}-X_1).
\end{equation}
The quadratic variation of $(I_t^{(1)})$ is given by $\rd [I^{(1)}]_t= H_1'(t)H_2(t)  - H_1(t) H_2'(t) \rd t= \sigma^2 \rd t$. Hence, the associated FAM takes the form
\begin{equation}
    M_t = X_0 + \int_{T_0}^t \frac{\sigma \var [ X_1 \mid X_0, I_t^{(1)} ]}{\frac{ \sigma^2}{2 \theta} \left(\e^{\theta(T_1-s)} - \e^{\theta(s-T_1)} \right)} \rd W_s = X_0 + \frac{2\theta}{\sigma}\int_{T_0}^t \frac{\var [ X_1 \mid X_0, I_t^{(1)} ]  }{ \e^{\theta(T_1-s)} - \e^{\theta(s-T_1)} } \rd W_s.
\end{equation}
\end{ex}

%

%%%%%%%%% END DOCUMENT %%%%%%%%%%%%%%%%%%%%%%%%%%%%%%
\end{document}